\newcommand{\Ueberschrift}{Simply transitive quaternionic lattices of rank $2$ over $\bF_q(t)$ and \\
a non-classical fake quadric}
\newcommand{\Kurztitel}{Quaternionic lattices and non-classical fake quadric}
\DeclareMathOperator{\rH}{H}
\DeclareMathOperator{\rK}{K}
\DeclareMathOperator{\rM}{M}
\DeclareMathOperator{\rT}{T}
\DeclareMathOperator{\rb}{b}
\DeclareMathOperator{\rc}{c}
\DeclareMathOperator{\rh}{h}
\newcommand{\bC}{{\mathbb C}}
\newcommand{\bF}{{\mathbb F}}
\newcommand{\bH}{{\mathbb H}}
\newcommand{\bN}{{\mathbb N}}
\newcommand{\bP}{{\mathbb P}}
\newcommand{\bQ}{{\mathbb Q}}
\newcommand{\bR}{{\mathbb R}}
\newcommand{\bT}{{\mathbb T}}
\newcommand{\bZ}{{\mathbb Z}}
\newcommand{\cD}{{\mathscr D}}
\newcommand{\cM}{{\mathscr M}}
\newcommand{\cX}{{\mathscr X}}
\newcommand{\cY}{{\mathscr Y}}
\newcommand{\dO}{{\mathcal O}}
\newcommand{\fO}{{\mathfrak O}}
\newcommand{\fo}{{\mathfrak o}}
\DeclareSymbolFont{cyrletters}{OT2}{wncyr}{m}{n}
\DeclareMathSymbol{\Sha}{\mathalpha}{cyrletters}{"58}
\newcommand{\one}{\mathbf{1}}
\newcommand{\surj}{\twoheadrightarrow} 
\newcommand{\inj}{\hookrightarrow}
\DeclareMathOperator{\Hom}{Hom}
\DeclareMathOperator{\Aut}{Aut}
\DeclareMathOperator{\coker}{coker}
\DeclareMathOperator{\GL}{GL}
\DeclareMathOperator{\PGL}{PGL}
\DeclareMathOperator{\PSL}{PSL}
\DeclareMathOperator{\SL}{SL}
\newcommand{\tr}{{\rm tr}} 
\newcommand{\matzz}[4]{\left(
\begin{array}{cc} #1 & #2 \\ #3 & #4 \end{array} \right)}
\DeclareMathOperator{\Spec}{Spec}
\DeclareMathOperator{\Proj}{Proj}
\DeclareMathOperator{\Pic}{Pic}
\DeclareMathOperator{\Nrd}{\rm Nrd}
\DeclareMathOperator{\trd}{\rm trd}
\DeclareMathOperator{\Frob}{Frob}
\DeclareMathOperator{\ord}{ord}
\DeclareMathOperator{\Tot}{Tot}
\DeclareMathOperator{\res}{res}
\DeclareMathOperator{\Gal}{Gal}
\newcommand{\ph}{\varphi}
\newcommand{\et}{\text{\rm \'et}}
\newcommand{\ab}{{\rm ab}}
\DeclareMathOperator{\DV}{{\rm DV}}
\DeclareMathOperator{\Sym}{\rm Sym}
\newcommand{\mgen}{\delta}
\DeclareMathOperator{\Lk}{{\rm Lk}}
\DeclareMathOperator{\Spf}{{\rm Spf}}
\newcommand{\ket}{\text{\rm k\'et}}
\newcommand{\secondtau}{\tau'}
\newcommand{\secondK}{K'}
\newcommand{\secondD}{D'}
\newcommand{\secondS}{S'}
\newcommand{\secondGamma}{\Gamma'}
\newcommand{\ov}[1]{\mbox{${\overline{#1}}$}} 
\newcommand{\Rmnum}[1]{\expandafter\@slowromancap\romannumeral #1@}
\newtheorem{thm}{Theorem}
\newtheorem{prop}[thm]{Proposition}
\newtheorem{lem}[thm]{Lemma}
\newtheorem{cor}[thm]{Corollary}
\newtheorem{thmABC}{Theorem}
\theoremstyle{definition}
\newtheorem{defi}[thm]{Definition}
\theoremstyle{remark}
\newtheorem{rmk}[thm]{Remark}
\newenvironment{pro*}[1][Proof]{{\it{#1:}} }{}
\newenvironment{pro**}[1][]{{\it{#1}} }{\hfill $\square$}
\numberwithin{equation}{section}
\newlist{enumer}{enumerate}{2}
\setlist[enumer]{label=(\roman*),align=left,labelindent=0pt,leftmargin=*,widest = (iii)}
\newlist{enumerar}{enumerate}{1}
\setlist[enumerar]{label=\arabic*.,align=left,labelindent=0pt,leftmargin=*,widest = 8.}
\newlist{enumera}{enumerate}{2}
\setlist[enumera]{label=(\arabic*),align=left,labelindent=0pt,leftmargin=*,widest = (8)}
\newlist{enumeral}{enumerate}{2}
\setlist[enumeral]{label=(\alph*),align=left,labelindent=0pt,leftmargin=*,widest = (m)}
\begin{document}

%
%
%
%
\hrule width\hsize
\hrule width\hsize
\hrule width\hsize

\vspace{1.2cm}

\title[\Kurztitel]{\Ueberschrift} 
\author{Jakob Stix}
\address{Jakob Stix, MATCH - Mathematisches  Institut, Universit\"at Heidelberg, Im Neuenheimer Feld 288, 69120 Heidelberg, Germany}
\email{stix@mathi.uni-heidelberg.de}
 \urladdr{http://www.mathi.uni-heidelberg.de/~stix/}

\author{Alina Vdovina}
\address{Alina Vdovina,  School of Mathematics and Statistics, Newcastle University, Newcastle upon Tyne, NE1 7RU, UK}
\email{alina.vdovina@ncl.ac.uk}
\urladdr{http://www.staff.ncl.ac.uk/alina.vdovina/}

\thanks{The authors acknowledge the hospitality and support provided by MATCH at Universit\"at Heidelberg, Newcastle University, and the Isaac Newton Institute in Cambridge.}
\date{\today} 

\maketitle

\begin{quotation} 
\noindent \small {\bf Abstract} --- We construct 
an infinite  series of simply transitive irreducible lattices in $\PGL_2(\bF_q((t))) \times \PGL_2(\bF_q((t)))$ by means of a quaternion algebra over $\bF_q(t)$. The lattices depend  on an odd prime power $q = p^r$ and a parameter $\tau \in \bF_q^\ast, \tau \not= 1$, and are the fundamental group of a square complex with just one vertex and universal covering $T_{q+1} \times T_{q+1}$, a product of trees with constant valency $q+1$. 

Our lattices give rise via non-archimedian uniformization to smooth projective surfaces of general type over $\bF_q((t))$ with ample canonical class, Chern ratio $\rc_1^2/\rc_2 = 2$, trivial Albanese variety and non-reduced Picard scheme. For $q = 3$, the Zariski-Euler characteristic attains its minimal value $\chi = 1$: the surface is a non-classical fake quadric.
\end{quotation} 


\setcounter{tocdepth}{1} {\scriptsize \tableofcontents}

\section{Introduction}
This paper explores the overlap of two different worlds: simply transitive lattices in products of trees and arithmetic lattices of rank $2$ in positive characteristic. 
The lattices that we construct share the properties of both worlds and as such are the first  of their kind: 
\begin{enumer}
\item they are arithmetic quaternionic lattices of rank $2$ in positive characteristic, and
\item their action on the product of two trees with the same valency in both factors is  simply transitive on  vertices, and 
\item we are able to describe explicitly a finite presentation of our lattices.
\end{enumer}

\smallskip

Lattices in products of trees provide examples for many interesting group theoretic properties,  for example there are finitely presented infinite simple groups \cite{burger-mozes:simple}, and many are not residually finite. If we restrict to torsion free lattices that act simply transitively on the vertices of the product of trees (not interchanging the factors), then those lattices are fundamental groups of square complexes with just one vertex, complete bipartite link and a VH-structure, see Section~\S\ref{sec:VHstructure}. This is combinatorially well understood and there are plenty of such lattices, see Section~\S\ref{sec:mass_formula} for a mass formula. 

Let $T_n$ denote the tree of constant valency $n$. For different odd prime numbers $p \not= \ell$, Mozes  \cite{mozes:cartan} \S3, 
Burger and Mozes \cite{burger-mozes:simple} and \cite{burger-mozes:lattices}~\S2.4 for $p$ and $\ell$ congruent to $1$ mod $4$, 
and later  in general Rattaggi  \cite{rattaggi:thesis}~\S3  found a lattice of arithmetic origin acting on $T_{p+1} \times T_{\ell +1}$ with  
simply transitive action on the vertices. This lattice is a $\{p,\ell\}$-arithmetic group for the Hamiltonian quaternions over $\bQ$.

\smallskip

Now we take the point of view from the arithmetic side. Let $D$ be a quaternion algebra over a global function field $K/\bF_q$ of a smooth curve over $\bF_q$, and let $S$ consist of the ramified places of $D$ together with two distinct unramified $\bF_q$-rational places $\tau$ and $\infty$. An $S$-arithmetic subgroup $\Gamma$ of the projective linear group $G = \PGL_{1,D}$ of $D$ acts as a cocompact lattice on the product of trees  $T_{q+1} \times T_{q+1}$ that are the Bruhat--Tits buildings for $G$ locally at $\tau$ and $\infty$. There are plenty of such arithmetic lattices, however, in general the action of $\Gamma$ on the set of vertices will not be simply transitive.

\smallskip

We now sketch the constuction of our lattices. 
Let $q$ be an odd prime power and let $c \in \bF_q^\ast$ be a non-square. Consider the $\bF_q[t]$-algebra with non-commuting variables $Z,F$
\[
\fO = \bF_q[t]\{Z,F\}/(Z^2 = c, F^2 = t(t-1), ZF = -FZ),
\]
an $\bF_q[t]$-order in the quaternion algebra $D = \fO \otimes \bF_q(t)$ over $\bF_q(t)$ ramified in $t=0,1$.
Let 
\begin{equation} \label{eq:firstdefinegroup}
G = \PGL_{1,\fO}
\end{equation}
be the algebraic group over $\bF_q[t]$ of units in $\fO$ modulo the center, a twisted form of $\PGL_2$, at least generically over 
$\bF_q[t,\frac{1}{t(t-1)}]$. 

\begin{thmABC}[see Theorem~\ref{thm:presentationLambda}] \label{thm:thmA}
Let $q$ be an odd prime power,  and choose a generator $\delta$ of the cyclic group $\bF_{q^2}^\ast$. Let 
$\tau \not= 1$ be an element of $\bF_q^\ast$, and $\zeta \in \bF_{q^2}^\ast$ be an element with 
norm $\zeta^{1+q} = (\tau-1)/\tau$. Let $G/\bF_q[t]$ be the algebraic group of \eqref{eq:firstdefinegroup}.

The irreducible arithmetic lattice $G(\bF_q[t,\frac{1}{t(t-\tau)}])$ has the following presentation: 
\begin{eqnarray*}
G(\bF_q[t,\frac{1}{t(t-\tau)}])  & \simeq & \left\langle \  d,a,b  \ \left| 
\begin{array}{c}
d^{q+1} =  a^2 = b^2 = 1 \\[0.3ex]
(d^i a d^{-i})(d^{j} b d^{-j}) = (d^l b d^{-l})(d^{k} a d^{-k})  \\[0.3ex]
\text{ for all } 0 \leq i,l \leq q \text{ and } j,k \text{ determined by  } (\star) 
\end{array} 
\right.
\right\rangle,
\end{eqnarray*}
where $(\star)$ is the system of equations in  the quotient group $\bF_{q^2}^\ast/\bF_q^\ast$
\[
\mgen^{j-l} =  (1 -  \zeta \mgen^{(i-l)(1-q)}) \cdot  \mgen^{(q+1)/2} \ \text{ and } \
\mgen^{k-i}  =  (1 -  \frac{1}{\zeta \mgen^{(i-l)(1-q)}}) \cdot \mgen^{(q+1)/2}.
\]
\end{thmABC}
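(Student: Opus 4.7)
The plan is to exhibit $\Gamma := G(\bF_q[t,\frac{1}{t(t-\tau)}])$ as the fundamental group of a finite square complex $X_\Gamma$ with a single vertex, and then to read off the presentation from the $1$-skeleton and the $2$-cells. Since $D$ ramifies only at $t = 0$ and $t = 1$, the set of places $S = \{0,\tau,\infty\}$ that is inverted contains exactly one ramified place ($t=0$) and the two unramified $\bF_q$-rational places $\tau, \infty$ (the hypothesis $\tau \in \bF_q^\ast \setminus \{1\}$ is exactly what makes $\tau$ split for $D$). Strong approximation then realizes $\Gamma$ as an irreducible cocompact lattice in $\PGL_2(\bF_q((t-\tau))) \times \PGL_2(\bF_q((1/t)))$ acting on the product $T_{q+1} \times T_{q+1}$ of the two local Bruhat--Tits trees. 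The central intermediate assertion, to be established by the mass formula discussed in the introduction together with a direct count of vertex orbits, is that this action is simply transitive on vertices. Granting this, $X_\Gamma$ is a square complex with one vertex and complete bipartite link $K_{q+1,q+1}$, and $\Gamma = \pi_1(X_\Gamma)$.

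Next I would choose the three generators explicitly. The subalgebra $\bF_q[t][Z] \subset \fO$ is an order in the unramified quadratic extension $\bF_{q^2}(t)/\bF_q(t)$, so the generator $\delta$ of $\bF_{q^2}^\ast$ defines via $Z \mapsto \delta$ an element $d \in \Gamma$ whose image in $D^\ast/K^\ast$ has order $q+1$. This $d$ fixes the standard vertex of each tree and permutes cyclically the $q+1$ edges at the vertex in each factor. One then picks a horizontal edge-inversion $a \in \Gamma$ (an involution that translates along one edge in the tree at $\tau$ and fixes the base vertex of the tree at $\infty$) and a vertical edge-inversion $b \in \Gamma$ (symmetrically). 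Both $a$ and $b$ will be constructed from $F$ multiplied by scalar factors involving $\zeta$; the involution condition $a^2 = b^2 = 1$ in $\PGL_{1,\fO}$ forces precisely the norm relation $\zeta^{1+q} = (\tau-1)/\tau$, which is the defining property of $\zeta$. With these choices the $q+1$ horizontal edges at the base vertex of $X_\Gamma$ are $\{d^i a d^{-i}\}_{0 \le i \le q}$ and the vertical edges are $\{d^j b d^{-j}\}_{0 \le j \le q}$.

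The $(q+1)^2$ squares at the vertex are in bijection with pairs $(i,l)$, and each contributes a relator $(d^i a d^{-i})(d^j b d^{-j}) = (d^l b d^{-l})(d^k a d^{-k})$ for uniquely determined $j, k$. The remaining work, which I expect to be the main obstacle, is the explicit verification of $(\star)$: one must multiply out both sides as quaternions in $\fO[\tfrac{1}{t(t-\tau)}]$, compare the $1, Z, F, ZF$ coefficients, and reduce modulo $K^\ast$ to an identity in the cyclic group $\bF_{q^2}^\ast/\bF_q^\ast$. The factor $1 - \zeta\delta^{(i-l)(1-q)}$ should emerge as a norm-form value from $\bF_{q^2}(t)$ produced by the anticommutation $ZF = -FZ$, while the exponent $(q+1)/2$ records the Galois correction needed to pass to the quotient $\bF_{q^2}^\ast/\bF_q^\ast$. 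Once $(\star)$ is verified for one pair $(i,l)$, the $\langle d\rangle$-equivariance of the construction propagates it to all pairs. That no further relations are hidden is then guaranteed geometrically: since $T_{q+1} \times T_{q+1}$ is the universal cover of $X_\Gamma$, the abstract group defined by the generators and relations in the theorem has the same link and the same $2$-cell data as $X_\Gamma$, which forces the two groups to agree.
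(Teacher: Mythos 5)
Your central structural claim is false, and the argument built on it cannot be repaired as stated. You propose to show that $G(\bF_q[t,\frac{1}{t(t-\tau)}])$ itself acts simply transitively on the vertices of $T_{q+1}\times T_{q+1}$ and is therefore the fundamental group of a one-vertex square complex. But the group in the theorem contains torsion: $d$ has order $q+1$ and in fact fixes the standard vertex $(v_\tau,v_\infty)$ (it comes from the constant subfield $\bF_q[Z]^\ast$ and lands in $\PGL_2(\fo)$ at both split places), and $a$, $b$ are involutions. A group with torsion cannot act freely on the CAT(0) space $T_{q+1}\times T_{q+1}$, so it is never the fundamental group of a square complex covered by a product of trees. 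The correct structure, which is what the paper proves, is that the subgroup $\Gamma_\tau=\langle A,B\rangle$ generated by the $q+1$ conjugates $d^iad^{-i}$ and $d^jbd^{-j}$ is torsion free, acts simply transitively on vertices (Theorem~\ref{thm:presentationgamma}, via Proposition~\ref{prop:neighbours} and the universality criterion Proposition~\ref{prop:mainabstractBMgroupresult}), and that $G(R')=\Gamma_\tau\rtimes\langle d\rangle$; the presentation of Theorem~A is then obtained by rewriting the relations of $\Gamma_\tau$ through the conjugation action of $d$, not by reading them off a one-vertex complex for $G(R')$.

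Two further ingredients are missing from your outline even after this correction. First, "simple transitivity by the mass formula plus a count of vertex orbits" is not a proof strategy: the mass formula of Section~\S\ref{sec:mass_formula} merely enumerates abstract one-vertex VH-complexes and says nothing about which arithmetic lattice realizes one. The paper instead computes explicitly that $A.v_0$ and $B.v_0$ are exactly the neighbour sets (Proposition~\ref{prop:neighbours}) and, separately, determines the full vertex stabilizer $G(R)_{v_0}=\Delta$ via Dickson's classification of finite subgroups of $\PGL_2$ together with a reduced-norm argument (Proposition~\ref{prop:stab_v}); this stabilizer computation is also what shows that $d,a,b$ generate all of $G(R')$, i.e.\ $\Lambda'=G(R')$, which your proposal never addresses (the paper does it by the index bound $(\Lambda:\Lambda')\le 2$ and the fact that $\Nrd(F)=t(t-1)$ has odd valuation at $t=1$, so $s\notin G(R')$). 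Second, your heuristic that $a^2=b^2=1$ "forces" the norm condition on $\zeta$ is off: $a_\xi$, $b_\eta$ are images of $(tZ+\xi F)Z$ and are $2$-torsion for any $\xi$ because $tZ+\xi F$ is purely imaginary; the condition $N(\zeta)=(\tau-1)/\tau$ is needed so that the reduced norms of the $b_\eta$ are units in $R'$ (supported at $t=0,\tau$), which is what puts $b$ in $G(R')$ and makes it move the vertex in the tree at $\tau$.
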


We also establish a presentation of the arithmetic group $\Lambda_\tau = G(\bF_q[t,\frac{1}{t(t-1)(t-\tau)}])$. The group $\Lambda_\tau$ is a semidirect product of a dihedral group of order $2(q+1)$ with a normal subgroup 
$\Gamma_\tau$ 
which has the following properties:
\begin{enumer}
\item
$\Gamma_\tau$ is an irreducible torsion free arithmetic lattice of rank $2$ with explicit finite presentation described in Theorem~\ref{thm:presentationgamma}.
\item  
$\Gamma_\tau$ acts simply transitively on $T_{q+1} \times T_{q+1}$ as the product of the Bruhat--Tits trees at $\tau$ and $\infty$ as above (Theorem~\ref{thm:presentationgamma}).
\item 
$\Gamma_\tau$ is residually finite, in fact even residually pro-$p$ by Proposition~\ref{prop:resprop}.
\item
$\Gamma_\tau$ is a FAB-group (all finite index subgroups have finite abelianization), more precisely, all 
non-trivial normal subgroups of $\Gamma_\tau$ have finite index (Proposition~\ref{prop:justinfinite}).
\item 
$\Gamma_\tau$ has cohomological dimension $2$ being the fundamental group of the square complex with one vertex 
\[
S_{\Gamma_\tau} = \Gamma_\tau \backslash T_{q+1} \times T_{q+1}.
\]
In fact, the group $\Gamma_\tau$ has a finite classifying space $S_{\Gamma_\tau} = \rK(\Gamma_\tau,1)$ of dimension $2$, hence is a group of type $WFL$, 
since $S_{\Gamma_\tau}$ has a contractible universal covering space, see Corollary~\ref{cor:explicitclassifyingspace}. 
\end{enumer}

The lattices $\Gamma_\tau$ are remarkable in several ways: as arithmetic lattices they have the rare property of being torsion free and acting simply transitively on the product of trees, as a fundamental group of a one vertex square complex they are special by being of arithmetic origin.  Moreover, unlike previous known lattices sharing both properties, see \cite{mozes:cartan} \S3, \cite{burger-mozes:simple}  and \cite{rattaggi:thesis}~\S3, the action is on a product of trees of the same valency (the valency being an odd prime power plus $1$) and the arithmetic is not $p$-adic but purely of positive characteristic.

\smallskip

The elements $\delta$ and $\zeta$ are necessary for the description of $\Gamma_\tau$ but do not change the group; however, the element $\tau$ is a true parameter of the construction.

\begin{thmABC}[see Corollary~\ref{cor:isomclass}] \label{thm:thmB}
Lattices of the form $\Gamma_\tau$ are commensurable if and only if they are isomorphic. They are isomorphic if and only if suitable  Galois conjugates of the parameters $\tau$ agree or add up to $1$.
\end{thmABC}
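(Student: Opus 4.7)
My plan is to establish the two equivalences separately. The implication ``isomorphic $\Rightarrow$ commensurable'' is immediate; the real content is in the converse and in the classification by the parameter $\tau$.

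\textbf{Commensurable $\Rightarrow$ isomorphic.} I would invoke Margulis' commensurator theorem in its form for irreducible arithmetic lattices in products of rank-one simple groups over non-archimedean local fields. Since $\Gamma_\tau$ is an irreducible arithmetic lattice in $\PGL_2(K_\tau) \times \PGL_2(K_\infty)$, with $K = \bF_q(t)$ and $K_\tau, K_\infty$ its completions at the two unramified places of interest, its commensurator in the ambient product group equals the image of $G(K) = \PGL_{1,D}(K)$ modulo center. Any abstract commensurability of $\Gamma_\tau$ and $\Gamma_{\tau'}$ then forces a common finite-index subgroup $H$ to sit inside both images, and the commensurators of $H$ from each side coincide as rational arithmetic groups, producing an isomorphism of the pairs $(K, D, \{\tau, \infty\})$ and $(K, D, \{\tau', \infty\})$. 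Crucially, the set of unramified cocompact places is a commensurability invariant --- one detects whether a place lies in this set by the action on the associated Bruhat--Tits tree --- so the unordered pair $\{\tau, \infty\}$ is recoverable up to the symmetries of the arithmetic datum.

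\textbf{Classifying isomorphism classes.} Any isomorphism of configurations $(K, D, \{\tau, \infty\}) \to (K, D, \{\tau', \infty\})$ decomposes as a pair $(\sigma, \phi)$, where $\sigma \in \Gal(\bF_q/\bF_p)$ is a Frobenius power and $\phi \in \PGL_2(\bF_q) = \Aut_{\bF_q}(\bP^1)$ is an $\bF_q$-automorphism of $\bP^1$ satisfying $\phi(\{0,1\}) = \{0,1\}$ (to preserve the ramification locus of $D$) and $\phi(\{\tau, \infty\}) = \{\sigma^{-1}(\tau'), \infty\}$. A short enumeration of such M\"obius transformations yields exactly four candidates: the identity (giving $\tau' = \sigma(\tau)$), the swap $t \mapsto 1-t$ (giving $\tau' = 1 - \sigma(\tau)$), and two involutions swapping $\tau \leftrightarrow \infty$ while respectively fixing or exchanging $0$ and $1$. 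A direct computation shows that the latter two involutions again yield $\tau' = \sigma(\tau)$ or $\tau' = 1 - \sigma(\tau)$ respectively, giving the stated characterization.

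\textbf{Main obstacle.} The principal technical hurdle is establishing the commensurator theorem in this precise positive-characteristic $S$-arithmetic setting and deducing that abstract commensurability translates into an isomorphism of configurations. This rests on strong approximation for the simply connected cover $\SL_{1,D}$, Borel density, and the irreducibility of $\Gamma_\tau$ already proved in the paper. Once the commensurator is identified with the image of $G(K)$, the final classification of $\bP^1$-automorphisms preserving the configuration is a small finite check.
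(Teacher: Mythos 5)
Your rigidity half is essentially sound and is, in substance, the paper's Proposition~\ref{prop:commensurabilityclassification}: abstract commensurability, via superrigidity (the paper cites Margulis VIII, Theorem~C for lattice homomorphisms rather than the commensurator theorem — note that to pass from an abstract isomorphism of finite-index subgroups to a map of the ambient algebraic groups over a field embedding you need superrigidity, not merely the identification of the commensurator), yields a semilinear automorphism of $\bP^1_{\bF_q}$ preserving $\{0,1\}$ and carrying $\{\tau',\infty\}$ to $\{\tau,\infty\}$, and your enumeration of the admissible M\"obius transformations gives the same answer as the paper's cross-ratio argument, namely $\tau'=\tau^{p^k}$ or $1-\tau^{p^k}$ (your labelling of which of the two involutions swapping $\tau\leftrightarrow\infty$ produces which value is off, but the resulting set of possibilities is correct).

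The genuine gap is the step from an isomorphism of configurations $(K,D,\{\tau,\infty\})\simeq(K,D,\{\tau',\infty\})$ to an isomorphism of the lattices $\Gamma_\tau\simeq\Gamma_{\tau'}$ themselves; your proposal asserts this implicitly and never justifies it, yet it is exactly what the theorem needs beyond the commensurability classification. A configuration isomorphism induces (at best, after sorting out orders) an isomorphism of the ambient $S$-arithmetic groups $\Lambda_\tau=G(R)\simeq\Lambda_{\tau'}$, but $\Gamma_\tau$ is a particular finite-index normal subgroup defined through the distinguished generating sets $A_\tau,B_\tau$ of quaternions $(tZ+\xi F)Z$, and it is neither shown nor claimed to be characteristic in $\Lambda_\tau$; for instance the coordinate change $t\mapsto 1-t$ does not carry the set $A_\tau\cup B_\tau$ to the corresponding generating set for the parameter $1-\tau$, so there is no automatic reason the induced identification matches $\Gamma_\tau$ with $\Gamma_{\tau'}$. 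The paper closes precisely this gap with the explicit presentation-level isomorphisms of Proposition~\ref{prop:1minustau} ($\Gamma_\tau\simeq\Gamma_{1-\tau}$, via multiplication of the parameters by $\zeta$) and Proposition~\ref{prop:taupowerp} ($\Gamma_\tau\simeq\Gamma_{\tau^p}$, via $\xi\mapsto\xi^p/c^{(p-1)/2}$), each checked against the local permutation structure so that the defining relations are preserved; only the combination of these explicit isomorphisms with the rigidity direction yields Corollary~\ref{cor:isomclass}. Without such a construction (or a proof that $\Gamma_\tau$ is canonically determined inside $\Lambda_\tau$), your argument establishes only the commensurability classification, not the equivalence of commensurability and isomorphism nor the isomorphism criterion in terms of $\tau$.
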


The question to explicitly determine a presentation of an arithmetic group is an old one. It is theoretically known which 
arithmetic lattices (in reductive groups) admit a finite presentation, see \cite{behr}, and it has been algorithmically solved 
for Fuchsian groups (lattices in $\PSL_2(\bR)$) with an algorithm described in Voight \cite{voight:funddomainfuchsian}.
Presentations of $\SL_2(\fo_k)$ for $k$ imaginary quadratic (a double cover of Bianchi groups) were determined by 
Swan in \cite{swan:bianchigrouppresentation}~\S4, even explicitly for a few small discriminants.
B\"ockle and Butenuth \cite{boecklebutenuth}~\S6 describe an algorithm that computes a fundamental domain and thus a presentation of a quaternionic lattice in the rank $1$ case over global function fields, see also Gekeler and Nonnengardt \cite{gekelernonnengardt} for congruence subgroups of $\GL_2(\bF_q[t])$. The analogous isotropic case of $\PSL_2$ is treated in Serre \cite{serre:trees} II \S2 but also only in the rank $1$ case in terms of the geometry of trees (not finitely generated by Nagao's theorem).  All these explicit results are in the rank $1$ case. 

In contrast, for arithmetic lattices of rank $2$ like our $\Lambda_\tau$ the (explicit) results are limited so far. 
To some extent Hilbert modular groups $\PSL_2(\fo_k)$ for a real quadratic number field $k$ are 
(split) analogues of $\Lambda_\tau$. Even though finite presentations  of Hilbert modular groups follow from the work of 
Blumenthal \cite{blumenthal}, Maa\ss\ \cite{maass:hilbertmodulargroup} and Herrmann \cite{herrmann}, the combinatorial geometry describing these presentations are quite involved 
so that explicit finite presentations are only known for small discriminants, and an algorithm is given 
in \cite{kirchheimerwolfahrt} when the class number of the real quadratic field is $1$.  In \cite{papikian:GLn} Papikian computes the Betti numbers of the quotient simplicial complex for certain  quaternionic lattices in $\PGL_n(\bF_q((t)))$ and general $n$.

Theorem~\ref{thm:thmA} provides 
explicit presentations for a family of arithmetic quaternionic lattices of rank $2$ over positive characteristic function fields $\bF_q(t)$ depending on a geometric parameter $\tau$ and an odd prime power parameter $q$.

\smallskip

Here is an application of our lattices $\Gamma_\tau$ to the theory of algebraic surfaces in characteristic~$p$. 
In \cite{mumford:fakep2} Mumford constructed the first fake $\bP^2$ by means of $p$-adic uniformization based on a torsion free lattice that acts simply transitively on the building of $\PGL_3(\bQ_2)$ with quotient complex of Euler characteristic $1$. 
Our lattice $\Gamma$ acts simply transitively on the building of $\PGL_2\big(\bF_q((y))\big) \times \PGL_2\big(\bF_q((t))\big)$, and for $q = 3$ (and necessarily $\tau = 2$) allows the construction of a non-classical fake quadric in characteristic~$3$, because the quotient square complex has Euler characteristic $\chi = (q-1)^2/4 = 1$. 

Fake quadrics are minimal surfaces of general type $S$ that have the same numerical invariants $\rc_1^2 =8$, $\rc_2 = 4$, and $\rh^1(S,\dO_S) = 0$ as the quadric $\{XY = Z^2\} \simeq \bP^1 \times \bP^1$ in $\bP^3$. The first examples of fake quadrics have been constructed by Kuga and his student Shavel \cite{shavel:fakequadric} by means of complex uniformisation using quaternionic lattices in $\PSL_2(\bR) \times \PSL_2(\bR)$. The construction here is a characteristic $p$ analogue, but only leads to a non-classical fake quadric, because the condition $\rh^1(S,\dO_S) = 0$ must be replaced by the property "$\Pic^0$ is non-reduced of dimension $0$" (hence the Albanese variety is nevertheless trivial).

Let $\ov{\pi}_1^\et(-)$ denote the \'etale fundamental group of a variety base changed to the algebraic closure of its field of definition, and let $\widehat{\Gamma}$ denote the pro-finite completion of a group $\Gamma$.

\begin{thmABC}[see Theorem~\ref{thm:ncfakequadric}] \label{thm:thmC}
Let $\Gamma_\tau$ be one of the lattices described above. There is a smooth projective surface of general type $X_\tau$ over 
$\bF_q((t))$ with ample canonical bundle, Chern ratio $\rc_1^2/\rc_2 = 2$, trivial Albanese variety, non-reduced Picard scheme and geometric \'etale fundamental group with an infinite continuous quotient
\[
\ov{\pi}_1^{\et}(X_\tau) \surj \widehat{\Gamma}_\tau.
\]
For $q = 3$, the surface $X_\tau$ is a non-classical fake quadric over $\bF_3((t))$.
\end{thmABC}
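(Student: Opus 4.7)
The plan is to apply non-archimedean uniformization of Mumford--Kurihara--Mustafin type to the diagonal action of $\Gamma_\tau$ on a product of two Drinfeld upper half planes. After choosing isomorphisms of the completions of $\bF_q(t)$ at the places $\tau$ and $\infty$ with $\bF_q((t))$, the lattice $\Gamma_\tau$ of item~(ii) above embeds as a torsion-free cocompact lattice in $\PGL_2(\bF_q((t))) \times \PGL_2(\bF_q((t)))$ whose action on the product of Bruhat--Tits trees $T_{q+1} \times T_{q+1}$ is simply transitive on vertices, hence free. The induced diagonal action on $\Omega \hat\times_{\bF_q((t))} \Omega$, where $\Omega$ is the Drinfeld upper half plane over $\bF_q((t))$, is therefore free, the quotient is a proper rigid-analytic space, and rigid GAGA algebraizes it to a smooth projective surface $X_\tau$ over $\bF_q((t))$. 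The Kurihara--Mustafin construction simultaneously provides an ample $\Gamma_\tau$-equivariant line bundle on $\Omega \hat\times \Omega$ descending to $\omega_{X_\tau}$, so that the canonical bundle of $X_\tau$ is ample and $X_\tau$ is a minimal surface of general type.

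The Chern numbers are extracted from the combinatorics of the square complex $S_{\Gamma_\tau}$. With one vertex, $q+1$ loop-edges (i.e.\ $(q+1)/2$ in each direction) and $(q+1)^2/4$ squares, one computes $\chi(S_{\Gamma_\tau}) = (q-1)^2/4$, and this equals $\chi(X_\tau, \dO_{X_\tau})$ by the non-archimedean analogue of the Hirzebruch--Mumford proportionality principle for $\PGL_2 \times \PGL_2$-uniformized surfaces. Since the uniformizing domain is a product of two rank-one factors, each factor contributes identically to the Chern invariants, forcing $\rc_1^2 = 2\rc_2$. Noether's formula $12\chi(\dO) = \rc_1^2 + \rc_2 = 3\rc_2$ then yields $\rc_2 = (q-1)^2$ and $\rc_1^2 = 2(q-1)^2$.

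For the Picard-theoretic statements, the FAB property from item~(iv) is decisive. The universal rigid cover $\Omega \hat\times \Omega$ is algebraically simply connected, giving a natural continuous surjection $\ov{\pi}_1^{\et}(X_\tau) \surj \widehat{\Gamma}_\tau$, which is already the last assertion of the theorem. Any morphism from $X_\tau$ to an abelian variety factors through the maximal free abelian pro-finite quotient of $\ov{\pi}_1^{\et}(X_\tau)$, which by FAB is finite; hence the Albanese variety is trivial and $\Pic^0(X_\tau)$ is zero-dimensional. The non-reducedness of $\Pic^0$, equivalent to $\rh^1(X_\tau, \dO_{X_\tau}) > 0$, is the deepest point of the theorem. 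I would establish it by a rigid-cohomological computation comparing $\rH^1(X_\tau, \dO)$ with a group-cohomological expression on $\Omega \hat\times \Omega$; in positive characteristic this positivity cannot be ruled out by the numerical invariants and genuinely reflects the characteristic-$p$ nature of the ground field.

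Specializing to $q = 3$ forces $\tau = 2$ (the unique element of $\bF_3^\ast \setminus \{1\}$), and the invariants become $\chi(\dO) = 1$, $\rc_2 = 4$, $\rc_1^2 = 8$, exactly the numerics of a fake quadric. Combined with the non-reducedness of $\Pic^0$, this presents $X_2$ as a non-classical fake quadric over $\bF_3((t))$, as opposed to the classical examples of Kuga--Shavel in which $\rh^1(\dO) = 0$. The principal technical obstacle of the whole argument is the non-reducedness assertion: the dimension of $\Pic^0$ is controlled by a clean group-theoretic input, but establishing the positivity of $\rh^1(\dO_{X_\tau})$ requires a careful rigid-cohomological analysis of the uniformizing domain that is specific to positive characteristic and does not have a counterpart in the complex or mixed-characteristic settings.
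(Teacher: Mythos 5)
Your overall route (non-archimedean uniformization of a product of two trees/Drinfeld half-planes, algebraization, combinatorial computation of invariants, group-theoretic input for $\pi_1$) matches the paper in outline, but two of the theorem's assertions are not actually proved in your sketch. The most serious gap is the Albanese statement. You argue that a map to an abelian variety factors through the maximal free abelian profinite quotient of $\ov{\pi}_1^{\et}(X_\tau)$, ``which by FAB is finite''. But the FAB property controls $\Gamma_\tau$, hence the quotient $\widehat{\Gamma}_\tau$, and the uniformization only gives a surjection $\ov{\pi}_1^{\et}(X_\tau) \surj \widehat{\Gamma}_\tau$ whose kernel is a priori enormous; a surjection in this direction yields lower bounds on $\pi_1^\ab$, never the upper bound on $\rH^1_{\et}(X_{\tau,\bar K},\bZ_\ell)$ that triviality of the Albanese requires. (Your justification of the surjection is also off: $\Omega$ is not algebraically simply connected --- it carries Drinfeld's tower of \'etale coverings --- though the surjection itself is fine, obtained in the paper from surjectivity of the specialization map plus the Zariski-locally trivial formal cover $\cY \times_R \cY \to \cX_\Gamma$.) The paper closes this gap by an actual cohomology computation: log-smoothness gives $\rH^1_{\et}(X_{\tau,\bar K},\Lambda) \simeq \rH^1_{\ket}$ of the log special fibre, and in the resulting exact sequence the (nonzero!) Kummer-\'etale $\rH^1$ of the components $\bP^1 \times \bP^1$ is killed exactly because the local permutation groups $P_A, P_B$ are $2$-transitive (Proposition~\ref{prop:determinelocalstructure}), yielding $\rH^1_{\et}(X_{\tau,\bar K},\Lambda) \simeq \Hom(\Gamma_\tau^\ab,\Lambda)$ for $\ell \nmid p(q+1)$ (Lemma~\ref{lem:computeH1}); finiteness of $\Gamma_\tau^\ab$ (Margulis) then forces $\Alb = 0$. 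Nothing in your proposal substitutes for this step.

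The second gap is the non-reducedness of $\Pic^0$, i.e.\ $\rh^1(X_\tau,\dO_{X_\tau}) > 0$: you only announce that you ``would establish it by a rigid-cohomological computation''. In the paper this is not a deep rigid-analytic fact but a concrete group-theoretic one: $\Gamma_\tau$ has an explicit abelian $p$-group quotient $a_\xi \mapsto \xi$, $b_\eta \mapsto \eta$ (well defined because the length-$4$ relations preserve $\xi + \eta$, Proposition~\ref{prop:uniformabelianquot}), and composing with $\ov{\pi}_1^{\et}(X_\tau) \surj \widehat{\Gamma}_\tau$ produces $p$-torsion in $\Pic_{X_\tau}$, hence a nonzero tangent space of $\Pic^0$ while its dimension is $0$; without this (or an equivalent) input the ``non-classical'' half of the statement is unproven. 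Two further points: your Chern-ratio step leans on an unproved ``non-archimedean Hirzebruch--Mumford proportionality'' for products of Drinfeld half-planes, whereas the paper computes directly on the semistable model ($\chi(\dO)$ via the resolution of $\dO_{X_{\Gamma,s}}$ in Proposition~\ref{prop:resolution}, giving $\chi(\Sigma) = (q-1)^2/4$; $\rc_1^2$ by restricting the relative log-canonical bundle, which is $\dO(q-1) \boxtimes \dO(q-1)$ on components; $\rc_2$ by Noether); and the infinitude of $\widehat{\Gamma}_\tau$ needs residual finiteness of $\Gamma_\tau$ (the paper proves it is residually pro-$p$, Proposition~\ref{prop:resprop}), which you never invoke.
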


\subsection{Outline of paper} In Section~\S\ref{sec:groupsontrees} we recall well known facts on the geometry of square complexes, define the notion of a VH-structure in a group, and prove a criterion (Proposition~\ref{prop:mainabstractBMgroupresult}) that detects if a VH-structure in a group is actually "universal".  

Section~\S\ref{sec:quatlat} contains the arithmetic construction of the quaternionic lattice $\Gamma_\tau$ culminating in Proposition~\ref{prop:establishVHstructure} that establishes a VH-structure in $\Gamma_\tau$. 
In Section~\S\ref{sec:ggt} we determine vertex stabilizers and the geometry of the action of the arithmetic lattice $\Gamma_\tau$ on the product of Bruhat--Tits trees, we prove Theorem~\ref{thm:thmA} and determine presentations of all arithmetic lattices dealt with in this paper.

In Section~\S\ref{sec:finitegrouptheory} we determine the local structure of the quotient square complex $\Gamma_\tau \backslash T_{q+1} \times T_{q+1}$, describe a certain abelian quotient of $\Gamma_\tau$ and show that $\Gamma_\tau$ is residually pro-$p$.
Section~\S\ref{sec:rigidity} contains classification results up to commensurability and up to isomorphism. Here we 
prove Theorem~\ref{thm:thmB}. 

Section~\S\ref{sec:fakequadric} is devoted to the application to algebraic surfaces in positive characteristic. We use the simply 
transitive lattices for the construction of non-classical fake quadrics by non-archimedian uniformization thereby proving Theorem~\ref{thm:thmC}.

\subsection{Notation, terminology} 
By a lattice we mean a discrete group that acts with finite covolume on a space, here a product of trees, 
and thus finite covolume means with compact quotient space. 
The action of an element $g$ or a set of group elements $A$ on $v$ will be denoted by $g.v$ or by $A.v$ respectively.

The following table lists non-standard notation (sorted by order of appearance) which is kept in use throughout more than one section:

\begin{longtable}{l@{\quad}l}
$V(S)$ & set of vertices of a square complex $S$ \\
$E(S)$ & set of oriented edges of a square complex $S$ \\
$S(S)$ & set of squares of a square complex $S$ \\
$\ov{E}(S)$ & set of unoriented edges of a square complex $S$ \\
$T_n$ & infinite tree of constant valency $n$ \\
$S_{A,B}$ & square complex associated to VH-structure $A,B$ in a group \\
$p$ & an odd prime number \\
$q$ & a power of $p$ \\
$\tau$ & parameter of the construction $\tau  \in \bP^1(\bF_q)$ different from $0,1,\infty$ \\
$c$ & a non-zero parameter in $\bF_q \setminus \bF_q^2$ \\
$K$ & the function field $\bF_q(t)$ in one variable $t$ \\
$D$ & quaternion algebra over $K$ with invariants $c$ and $t(t-1)$ \\
$Z,F$ & anticommuting generators of $D$ with $Z^2 = c$ and $F^2 = t(t-1)$ \\
$\bF_q(z)$ & quadratic extension of $K = \bF_q(t)$ splitting $D$ such that $\bF_q((z)) = \bF_q((t^{-1}))$ \\
$\rM_2(-)$ & the $2$ by $2$ matrix algebra over a ring \\
$\rho_z$ & explicit choice of splitting  $D \inj \rM_2(\bF_q(z))$ \\
$G$ & the group $G=\PGL_{1,D}$ as an algebraic group over $K$ \\
$R' \subset R$ & explicit Dedekind rings with fraction field $K$ \\
$\fO' \subset \fO$ & explicit maximal $R'$ (resp. \ $R$) orders in $D$ \\
$\bF_q[Z]$ & subfield in $D$, quadratic over $\bF_q$\\
$\mgen$ & fixed generator of the multiplicative group $\bF_q[Z]^\ast$ \\
$\Delta$ & dihedral group of order $2(q+1)$ in $G(K)$ \\
$d,s$ & rotation and reflection in $\Delta$: images of $\delta$ and $F$ \\
$\wp$ & the map $\wp(w) = w/\bar w$ \\
$N$ & norm map $N: \bF_q[Z]^\ast \to \bF_q^\ast$ \\
$\bT$ & the torus of norm $1$ for the extension $\bF_q[Z]/\bF_q$ as an algebraic group over $\bF_q$ \\
$\bT_u$ & the smooth projective conic that completes the $\bT$-torsor of elements of norm $u$ \\
$\sigma_\xi(w)$ & automorphism $\sigma_\xi(w)= \bar w \cdot \wp(w - \xi)$ of $\bT_u$  \\
$\zeta$ & fixed choice of an element in $\bF_q[Z]$ of norm $N(\zeta) = \frac{\tau-1}{\tau}$ \\ 
$\bF_q(y)$ & quadratic extension of $K = \bF_q(t)$ splitting $D$ such that $\bF_q((y)) = \bF_q((t-\tau))$ \\
$\rho_y$ & explicit choice of splitting  $D \inj \rM_2(\bF_q(y))$ \\
$\gamma_\xi$ & quaternion $\gamma_\xi = tZ + \xi F$ \\
$N_c$ & the set of $\xi \in \bF_q[Z]$ of norm $N(\xi) = -c$ \\
$M_\tau$ & the set of $\eta \in \bF_q[Z]$ of norm $N(\eta) = \frac{c\tau}{1-\tau}$ \\
$\alpha_\xi$ & quaternion $\alpha_\xi  = (tZ + \xi F)Z$ \\
$a_\xi$ & image of $\alpha_\xi$ in $G(R')$ \\
$\beta_\eta$ & quaternion $\beta_\eta = (tZ + \eta F)Z$ \\
$b_\eta$ & image of $\beta_\eta$ in $G(R')$ \\
$A=A_\tau$ & the set of all $a_\xi$ for $\xi \in N_c$ \\
$B = B_\tau$ & the set of all $b_\eta$ for $\eta \in M_\tau$ \\
$\alpha$ & the element $\gamma_Z = tZ + ZF$ \\
$\beta$ & the element $\gamma_{Z/\zeta} = tZ + \zeta^{-1} ZF$ \\
$\Gamma = \Gamma_\tau $ &   the group generated by the sets $A$ and $B$ in $G(K)$ \\ 
$\Lambda' \subseteq \Lambda$ & the subgroups generated by $d,a,b$ respectively $d,s,a,b$ in $G(K)$ \\
$\Sym(M)$ & Symmetric group of the set $M$ \\
$P_A$, $P_B$ & local permutation groups for a VH-structure $A,B$ in a group \\
$S_{\Gamma_\tau}$ & the one vertex square complex $\Gamma_\tau \backslash T_{q+1} \times T_{q+1}$, a finite classifying space for $\Gamma_\tau$ 
\end{longtable}

\section{Preliminaries on groups acting on products of trees} \label{sec:groupsontrees}

We give a quick introduction to the geometry of square complexes and fix the terminology.

\subsection{Square complexes}
Recall that a square complex $S$ consists of a graph $S^1 = (V(S),E(S))$ with set of vertices $V(S)$, and set of oriented edges $E(S)$, together with a set of squares $S(S)$ that are combinatorially attached to the graph $S^1$ as explained below. Reversing the orientation of an edge $e \in E(S)$ is written as $e \mapsto e^{-1}$ and the set of unoriented edges is the quotient set 
\[
\ov{E}(S) = E(S)/(e \sim e^{-1}).
\]

More precisely, a square $\square$ is described by a $4$-tuple of oriented edges $e_i \in E(S)$
\[
\square = (e_1,e_2,e_3,e_4)
\]
where  the origin of $e_{i+1}$ is the terminus of $e_i$ (with $i$ modulo $4$). Such $4$-tuples describe the same square if and only if they belong to the same orbit under the dihedral action  generated by cyclically permuting the edges $e_i$ and by  the reverse orientation  map 
\[
(e_1,e_2,e_3,e_4) \mapsto (e_4^{-1},e_3^{-1},e_2^{-1},e_1^{-1}).
\]
We visualize squares $(e_1,e_2,e_3,e_4)$ as 
\[
\xymatrix@M0ex@R-2ex@C-2ex{
\bullet \ar[r]^(1){e_2} & \ar@{-}[r] & \bullet \ar[d]^(1){e_3} \\
\ar@{-}[u] & & \ar@{-}[d] \\
\bullet  \ar[u]^(1){e_1} & \ar@{-}[l] & \bullet \ar[l]^(1){e_4} \\
}
\]
For $x \in V(S)$ let $E(x)$ denote the set of oriented edges originating in the vertex $x$. For more details on square complexes we refer for example to \cite{burger-mozes:lattices} \S1.

\subsubsection{Products of trees} 
Let $T_n$ denote the $n$-valent tree. The product of trees 
\[
M = T_{m} \times T_{n}
\]
is a Euclidean building of rank $2$ and a square complex.  In this note we are interested in  lattices, i.e., groups $\Gamma$ acting discretely and cocompactly on $M$ respecting the structure as square complex.  The quotient $S = \Gamma \backslash M$ is a finite square complex, typically with orbifold structure coming from the stabilizers of cells.

\subsubsection{Torsion free lattices} 
We are especially interested in the case where $\Gamma$ is torsion free and acts simply transitively on the set of vertices of $M$. These yield the smallest quotients $S$ without non-trivial orbifold structure. Since $M$ is a CAT(0) space, any finite group stabilizes a cell of $M$. Moreover, the stabilizer of a cell is pro-finite, hence compact, so that a discrete group $\Gamma$ acts with trivial stabilizers on $M$ if and only if $\Gamma$ is torsion free.

\subsubsection{Link}
The \textbf{link} at a vertex $x$ in a square complex $S$ is the (undirected multi-)graph $\Lk_x$ whose set of vertices is $E(x)$ and edges in $\Lk_x$ are squares in $S$ containing the respective edges of $S$,  see \cite{burger-mozes:lattices} \S1. 

\begin{prop}
The universal cover of a finite square complex $S$ is a product of trees if and only if the link $\Lk_x$ at each vertex $x \in S$ is a complete bipartite graph.
\end{prop}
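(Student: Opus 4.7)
The plan is to prove the two implications separately.

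For the easy direction, if $\widetilde{S} \cong T_m \times T_n$ then at a vertex $\tilde{x} = (u,v)$ the set $E(\tilde{x})$ splits into $m$ horizontal edges coming from the $T_m$-factor and $n$ vertical edges coming from the $T_n$-factor. Two edges at $\tilde{x}$ bound a common square in $T_m \times T_n$ if and only if one is horizontal and the other is vertical, in which case the completing square is unique. Hence $\Lk_{\tilde{x}} = K_{m,n}$. Since the covering map $\widetilde{S} \to S$ is a local isomorphism of square complexes, it preserves links, so $\Lk_x = K_{m,n}$ for every vertex $x$ of $S$.

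For the converse, I would first invoke Gromov's link condition. The graph $K_{m,n}$ is bipartite and hence has no odd cycles, so its girth is at least $4$; this makes $S$ a locally CAT(0) square complex, and $\widetilde{S}$ is CAT(0) by the Cartan--Hadamard theorem. The key structural input is the bipartition of each link: at each vertex $\tilde{x} \in \widetilde{S}$ it yields a partition of $E(\tilde{x})$ into two classes, and by the very definition of the link two edges at $\tilde{x}$ bound a common square if and only if they lie in opposite classes of this partition.

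I would then globalize the bipartition to a $2$-coloring of edges. On each square $\square = (e_1,e_2,e_3,e_4)$ the local colors at the four corners are forced to alternate, so opposite edges of $\square$ necessarily share the same color. Since $\widetilde{S}$ is simply connected and its only $2$-cells are squares, the local choices can be propagated consistently around every loop, and ``horizontal'' vs.\ ``vertical'' becomes a well-defined labelling of all unoriented edges of $\widetilde{S}$. Let $\widetilde{S}_H, \widetilde{S}_V \subset \widetilde{S}$ denote the corresponding subgraphs; by construction they are $m$-regular and $n$-regular respectively.

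To conclude, one checks that each connected component of $\widetilde{S}_H$ (resp.\ $\widetilde{S}_V$) is isomorphic to $T_m$ (resp.\ $T_n$), and that the square-completion property encoded by the \emph{completeness} of $K_{m,n}$ assembles $\widetilde{S}$ into the product $T_m \times T_n$, the isomorphism being the map that sends a vertex to the pair formed by its horizontal and vertical projections onto the components through a fixed base point. Absence of cycles in $\widetilde{S}_H$ follows from the CAT(0) theory of hyperplanes: parallelism classes of edges dualize to embedded separating hyperplanes in the CAT(0) square complex $\widetilde{S}$, and a cycle in $\widetilde{S}_H$ would force some horizontal hyperplane to close up on itself, contradicting embeddedness. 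This last step -- which genuinely uses the CAT(0) property and not merely the combinatorics of links -- is in my view the main technical obstacle; everything else is bookkeeping from the local product structure imposed by the $K_{m,n}$ links.
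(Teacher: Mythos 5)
Your ``only if'' direction is fine, and your overall strategy for the converse (Gromov link condition, globalizing the bipartition of the links over the simply connected cover, then exhibiting a product structure) is a legitimate route; note for comparison that the paper does not argue at all here but simply quotes the result as well known from \cite{ballmannbrin}, Theorem~C. However, as written your converse has two genuine gaps. First, the acyclicity of $\widetilde{S}_H$: ``a cycle in $\widetilde{S}_H$ would force some horizontal hyperplane to close up on itself, contradicting embeddedness'' is not a contradiction --- an embedded circle is still embedded. What you actually need is that in a simply connected square complex a hyperplane cannot contain a closed loop (hyperplanes are themselves trees), and moreover the ladder of squares you implicitly run around a horizontal cycle (completing each horizontal edge with a vertical edge, using completeness of the link) need not return to the \emph{same} vertical edge at the starting vertex: there is a holonomy issue, and in that case the hyperplane does not close up but self-osculates, which has to be excluded separately (embeddedness/convexity of carriers), or you should replace the whole step by a cleaner argument --- e.g.\ a reduced horizontal edge path makes angle $\geq \pi$ at every vertex because two vertices on the same side of a complete bipartite link are at link-distance $\pi$, so it is a local geodesic, and a CAT(0) space has no closed local geodesics; alternatively a minimal-area disc diagram bounded by a horizontal cycle would carry vertical tracks that must end on the purely horizontal boundary.

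Second, the sentence ``one checks that the square-completion property \dots assembles $\widetilde{S}$ into the product, the isomorphism being the map that sends a vertex to the pair formed by its horizontal and vertical projections'' presupposes exactly what is to be proved: without the product structure the ``projections'' are not yet defined, and showing they are well defined (independent of the chosen horizontal/vertical path) is the heart of the theorem, not bookkeeping. The honest way to finish is to fix a base vertex, form the product of its horizontal and vertical components $T\times T'$, and develop a combinatorial map $T\times T' \to \widetilde{S}$ by iterated unique square-completion, checking it is a local isomorphism, hence a covering, hence an isomorphism since $\widetilde{S}$ is simply connected --- in spirit the same development argument as the paper's Proposition~\ref{prop:mainabstractBMgroupresult}. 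Finally, a small point: the proposition does not assume constant valency, so the links may be different complete bipartite graphs at different vertices and the two trees need not be regular; your fixed $K_{m,n}$ and the $m$-, $n$-regularity claims should be dropped or made local.
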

\begin{proof}
This is well known and follows for example from \cite{ballmannbrin} Theorem C.
\end{proof}

\subsubsection{VH-structure}  \label{sec:VHstructure}
A \textbf{vertical/horizontal structure}, in short a \textbf{VH-structure}, on a square complex $S$ consists on a bipartite structure $\ov{E}(S) = E_v \sqcup E_h$ on the set of unoriented edges of $S$ such that for every vertex $x \in S$ the link $\Lk_x$ at $x$ is the complete bipartite graph on the induced partition of $E(x) = E(x)_v \sqcup E(x)_h$. Edges in $E_v$ (resp.\ in $E_h$) are called vertical (resp.\ horizontal) edges. See \cite{wise:thesis} for general facts on VH-structures. The \textbf{partition size} of the VH-structure is the function  $V(S) \to 
\bN \times \bN$ on the set of vertices 
\[
x \mapsto (\# E(x)_v, \# E(x)_h)
\]
or just the corresponding tuple of integers if the function is constant. Here $\#(-)$ denotes the cardinality of a finite set. 

We record the following basic fact, see \cite{burger-mozes:lattices} after Proposition 1.1:

\begin{prop} \label{prop:VHstructureanduniversalcover}
Let $S$ be a square complex. The following are equivalent.
\begin{enumeral}
\item 
The universal cover of $S$ is a product of trees $T_m \times T_n$ and the group of covering transformations does not interchange the factors.
\item 
There is a VH-structure on $S$ of constant partition size $(m,n)$. \hfill $\square$
\end{enumeral}
\end{prop}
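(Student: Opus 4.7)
The plan is to prove the two implications separately, with (i)$\Rightarrow$(ii) being essentially formal and (ii)$\Rightarrow$(i) requiring the splitting theorem for CAT(0) square complexes with bipartite links.

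For (i)$\Rightarrow$(ii): Suppose the universal cover $\tilde S$ is $T_m \times T_n$ and the deck group $\pi = \pi_1(S)$ does not interchange the two factors. The product $T_m \times T_n$ carries a canonical VH-structure in which an unoriented edge of the form $e \times \{v\}$ (with $e$ an edge of $T_m$, $v$ a vertex of $T_n$) is declared vertical, and an edge of the form $\{u\} \times e'$ is declared horizontal. At any vertex $(u,v)$ the link is visibly the complete bipartite graph on the induced partition of $E((u,v))$, of sizes $(m,n)$. Since $\pi$ preserves the product decomposition and by hypothesis does not swap factors, it preserves the bipartition $\ov{E}(\tilde S) = \tilde E_v \sqcup \tilde E_h$ as well, so this bipartition descends to $S$, yielding a VH-structure of constant partition size $(m,n)$.

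For (ii)$\Rightarrow$(i): The given VH-structure lifts to a $\pi$-equivariant VH-structure on $\tilde S$ of constant partition size $(m,n)$, so every link in $\tilde S$ is $K_{m,n}$ with its bipartition induced by vertical/horizontal. It therefore suffices to show that a simply connected square complex with this property is isomorphic, as a square complex with VH-structure, to the product $T_m \times T_n$ with its canonical VH-structure; $\pi$ then automatically does not interchange factors because it preserves the bipartition.

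To construct the splitting I would fix a base vertex $\tilde x_0$ and let $T^v$ (resp.\ $T^h$) denote the connected component of $\tilde x_0$ in the subgraph of $\tilde S$ spanned by vertical (resp.\ horizontal) edges. Two facts need to be established: first, each of $T^v$, $T^h$ is a tree, of constant valency $m$ resp.\ $n$; the valency statement is immediate from the partition size, while the tree property is the content of the link condition together with simple connectedness, the point being that an edge-loop in $T^v$ would bound a disk tiled by squares, and since every square has two vertical and two horizontal boundary edges while the bipartite link forbids two vertical edges to span a corner of a square, such a disk must be trivial. Second, the assignment sending a vertex $\tilde y$ to its pair of "vertical" and "horizontal" coordinates in $T^v \times T^h$, obtained by rewriting any edge-path from $\tilde x_0$ to $\tilde y$ into the form (horizontal part)(vertical part) via repeated square-commutation, is well-defined and yields a combinatorial isomorphism $\tilde S \simeq T^v \times T^h$ compatible with the VH-structures. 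This is precisely the square-complex version of the Ballmann--Brin splitting theorem \cite{ballmannbrin} Theorem C, which I would invoke to avoid reproving the rewriting/commutation calculus.

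The main technical obstacle is the well-definedness of the coordinate map in the second step: one must show that two edge-paths between the same vertices give the same $(T^v, T^h)$-coordinates, which requires carefully analysing how the square-commutation moves interact with $2$-cells filling loops, and this is exactly the place where the simple connectedness and the complete bipartite link assumption combine. Citing Ballmann--Brin packages this combinatorial bookkeeping into a single reference, which is presumably why the authors take this shortcut.
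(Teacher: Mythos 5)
Your proof is correct and takes essentially the same route as the paper, which records this statement as a known fact with a pointer to Burger--Mozes (after their Proposition~1.1) and, for the splitting, to Ballmann--Brin Theorem~C: descending the canonical edge bipartition of $T_m \times T_n$ gives (i)$\Rightarrow$(ii), and lifting the VH-structure and invoking the splitting theorem for simply connected square complexes with complete bipartite links gives (ii)$\Rightarrow$(i), with the deck group not swapping factors because it preserves the lifted bipartition. No gaps beyond what the cited splitting theorem is explicitly allowed to carry.
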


\begin{cor} \label{cor:latticesversusVH}
Torsion free cocompact lattices $\Gamma$ in  $\Aut(T_m) \times \Aut(T_n)$ not interchanging the factors and 
up to conjugation correspond uniquely to finite square complexes with a VH-structure of partition size $(m,n)$ up to isomorphism.
\end{cor}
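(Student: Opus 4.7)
The plan is to describe the bijection in both directions and then verify that it respects the equivalence relations (conjugation on the lattice side, isomorphism on the complex side).

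First, for the forward direction, given a torsion-free cocompact lattice $\Gamma \subset \Aut(T_m) \times \Aut(T_n)$ that preserves the two factors, I would form the quotient $S_\Gamma = \Gamma \backslash (T_m \times T_n)$. Because $T_m \times T_n$ is a CAT(0) space, every finite subgroup stabilizes a cell, so the torsion-freeness of $\Gamma$ gives a free action; combined with cocompactness, $S_\Gamma$ inherits the structure of a finite square complex. Since $\Gamma$ does not interchange factors, the natural bipartition of the edges of $T_m \times T_n$ into ``horizontal'' (coming from $T_n$) and ``vertical'' (coming from $T_m$) descends to a bipartition on $\ov{E}(S_\Gamma)$, and the link at every vertex of $S_\Gamma$ is the image of the link at a lift, which is the complete bipartite graph on $(m,n)$ vertices. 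This produces a VH-structure of constant partition size $(m,n)$.

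For the reverse direction, given a finite square complex $S$ equipped with a VH-structure of partition size $(m,n)$, Proposition~\ref{prop:VHstructureanduniversalcover} tells me that its universal cover is $\widetilde{S} = T_m \times T_n$ with deck group $\Gamma = \pi_1(S)$ acting cocompactly, freely (hence torsion-freely) by covering transformations, and that the action respects the product decomposition, i.e.\ does not interchange the factors. Thus $\Gamma$ embeds as a torsion-free cocompact lattice in $\Aut(T_m) \times \Aut(T_n)$ not swapping factors. The two constructions are manifestly inverse to each other up to the chosen identification of the universal cover.

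For the equivalence relations, I would argue as follows. If $\Gamma_1, \Gamma_2$ are conjugate via some $g \in \Aut(T_m) \times \Aut(T_n)$ (not swapping factors), then $g$ induces an isomorphism of square complexes $S_{\Gamma_1} \simeq S_{\Gamma_2}$ preserving the VH-structure. Conversely, any isomorphism $\varphi \colon S_{\Gamma_1} \to S_{\Gamma_2}$ of VH-structured square complexes lifts to an isomorphism $\tilde\varphi$ of universal covers; since both universal covers are canonically identified with $T_m \times T_n$ in a way compatible with the VH-structure, $\tilde\varphi$ is an element of $\Aut(T_m) \times \Aut(T_n)$ not interchanging factors, and by construction it conjugates $\Gamma_1$ to $\Gamma_2$.

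The only subtle point, and probably the main obstacle to write out cleanly, is the rigidity needed to lift $\varphi$ to an element of the \emph{product} $\Aut(T_m) \times \Aut(T_n)$ rather than merely an element of $\Aut(T_m \times T_n)$: one must use that the VH-bipartition on edges determines the product decomposition of the universal cover up to swapping factors, and then invoke the hypothesis that $\varphi$ preserves the VH-labelling to rule out the swap. With this rigidity in hand the corollary follows directly from Proposition~\ref{prop:VHstructureanduniversalcover} and standard covering space theory.
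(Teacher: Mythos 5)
Your proposal is correct and follows essentially the same route as the paper: quotient by the free cocompact action in one direction, Proposition~\ref{prop:VHstructureanduniversalcover} plus covering space theory (identifying $\pi_1(S,x)$ with the deck group inside $\Aut(T_m)\times\Aut(T_n)$ via a choice of base point) in the other, with base-point/conjugation bookkeeping. You in fact spell out the conjugation--isomorphism correspondence, including the rigidity that the VH-bipartition pins down the product decomposition, more explicitly than the paper's one-line remark, which is a reasonable elaboration rather than a different argument.
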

\begin{proof}
A lattice $\Gamma$ yields a finite square complex $S = \Gamma \backslash T_m \times T_n$ of the desired type. Conversely, a finite square complex $S$ with VH-structure of constant partition size $(m,n)$ has universal covering space $M = T_m \times T_n$ by Proposition~\ref{prop:VHstructureanduniversalcover}, and the choice of a base point vertex $\tilde{x} \in M$ above the vertex $x \in S$  identifies $\pi_1(S,x)$ with the lattice $\Gamma = \Aut(M/S) \subseteq \Aut(T_m) \times \Aut(T_n)$. The lattice depends on the  chosen base points only up to conjugation.
\end{proof}

Simply transitive torsion free lattices not interchanging the factors as in Corollary~\ref{cor:latticesversusVH} correspond to square complexes with only one vertex and a VH-structure, necessarily of constant partition size. These will be studied in the next section.

\subsection{One vertex square complexes}

Let $S$ be a square complex with just one vertex $x \in S$ and a VH-structure $\ov{E}(S) = E_v \sqcup E_h$. Passing from the origin to the terminus of an oriented edge induces a fixed point free involution on $E(x)_v$ and on $E(x)_h$. Therefore the partition size is necessarily a tuple of even integers.

\begin{defi} \label{defi:BMstructureingroup}
A \textbf{vertical/horizontal structure}, in short \textbf{VH-structure},  in a group $G$ is an ordered pair $(A,B)$ of finite subsets $A,B \subseteq G$ such that the following holds.
\begin{enumer}
\item \label{defiitem:AandBinvolution} Taking inverses induces fixed point free involutions on $A$ and $B$.
\item The union $A \cup B$ generates $G$.
\item \label{defiitem:ABequalsBA} The product sets $AB$ and $BA$ have size $\#A \cdot \#B$ and $AB = BA$.
\item \label{defiitem:AB2torsion} The sets $AB$ and $BA$ do not contain $2$-torsion.
\end{enumer}
The tuple $(\#A,\#B)$ is called the \textbf{valency vector} of the VH-structure in $G$.
\end{defi}

\subsubsection{Construction} 
Similar to the construction in  \cite{burger-mozes:lattices} \S6.1 starting from a VH-datum, the following construction 
\begin{equation} \label{eq:constructionSAB}
(A,B) \leadsto S_{A,B}
\end{equation}
yields a square complex $S_{A,B}$ with one vertex and VH-structure starting from a VH-structure $(A,B)$ in a group $G$. The vertex set $V(S_{A,B})$ contains just one vertex $x$. The set of oriented edges of $S_{A,B}$ is the disjoint union 
\[
E(S_{A,B}) = A \sqcup B
\]
with the orientation reversion map given by $e \mapsto e^{-1}$. Since $A$ and $B$ are preserved under taking inverses, there is a natural vertical/horizontal structure such that $E(x)_h = A$ and $E(x)_v = B$. 

The squares of $S_{A,B}$ are constructed as follows. Every relation in $G$ of the form 
\begin{equation} \label{eq:relationabba}
ab = b'a'
\end{equation}
with $a,a' \in A$ and $b,b' \in B$ (not necessarily distinct) gives rise to a square 
\[
\square = (a,b,a'^{-1},b'^{-1}).
\] 
The following relations are equivalent to \eqref{eq:relationabba} and also yield the same square:
\begin{eqnarray*}
a'b^{-1} & = &  b'^{-1}a, \\
a^{-1}b' & = & ba'^{-1}, \\
a'^{-1}b'^{-1} & = & b^{-1}a^{-1}.
\end{eqnarray*}
Of these four squares the square complex $S_{A,B}$ contains only one. 

\begin{lem}
The link $\Lk_x$ of $S_{A,B}$ in $x$ is the complete bipartite graph $L_{A,B}$ with vertical vertices labelled by $A$ and horizontal vertices labelled by $B$.
\end{lem}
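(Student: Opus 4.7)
The plan is to verify three things about the link $\Lk_x$ of $S_{A,B}$: that it is bipartite with sides indexed by $A$ and $B$, that it has exactly $\#A \cdot \#B$ edges, and that every possible bipartite edge $\{\alpha,\beta\}$ with $\alpha \in A$, $\beta \in B$ is realized by some corner. Combining these three gives $\Lk_x = L_{A,B}$.

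First, I would establish the bipartite structure. By construction every square of $S_{A,B}$ comes from a relation $ab = b'a'$ and hence has edges alternating between $A$ and $B$ around its boundary. At each corner the two outgoing edges at $x$ are $e_i^{-1}$ and $e_{i+1}$; by condition (i), $A$ and $B$ are each closed under inversion, so one of these lies in $A$ and the other in $B$. Thus every link edge connects an $A$-vertex to a $B$-vertex.

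Second, I would count squares. By (iii) the assignment $(a,b) \mapsto (b',a')$ with $ab = b'a'$ is a bijection $A \times B \to B \times A$, giving $\#A \cdot \#B$ such relations. Each square of $S_{A,B}$ corresponds to an orbit of the dihedral symmetry group on such $4$-tuples, and the text describes exactly the four equivalent forms. The crucial step is to verify that no orbit has size less than $4$: a collapse would require $(a,b) = (a^{-1},b')$, $(a,b) = (a',b^{-1})$, or $(a,b) = (a'^{-1},b'^{-1})$. The first two contradict (i) directly, while the third substituted into $ab = b'a'$ yields $(ab)^2 = 1$ with $ab \in AB$, contradicting (iv). Hence $S_{A,B}$ has exactly $\#A \cdot \#B/4$ squares, contributing $4 \cdot \#A \cdot \#B/4 = \#A \cdot \#B$ corners at $x$, which matches the edge count of $L_{A,B}$.

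Third, I would exhibit every bipartite edge as a corner: given $\alpha \in A$, $\beta \in B$, the product $\alpha^{-1}\beta \in AB = BA$ rewrites uniquely as $\beta' \alpha'$ with $\beta' \in B$, $\alpha' \in A$, and the square $(\alpha^{-1}, \beta, \alpha'^{-1}, \beta'^{-1})$ has its corner at positions $1$--$2$ giving the link edge $\{(\alpha^{-1})^{-1}, \beta\} = \{\alpha, \beta\}$. Together with the exact corner count from the second step, a pigeonhole argument forces every link edge to occur exactly once, hence $\Lk_x = L_{A,B}$. The main obstacle is the orbit-size bookkeeping in step two: conditions (i) and (iv) are present precisely to rule out degenerate squares whose smaller orbit would leave $\Lk_x$ short of corners and hence short of edges of $L_{A,B}$.
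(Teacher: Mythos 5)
Your proof is correct and follows essentially the same route as the paper: the containment $L_{A,B}\subseteq \Lk_x$ comes from property (iii), and the count of exactly $(\#A\cdot\#B)/4$ squares — using (i) and (iv) to rule out collapsed $4$-tuples — forces equality by comparing edge numbers. Your version merely spells out the collapse cases and the corner bookkeeping in more detail than the paper does.
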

\begin{proof}
By \ref{defiitem:ABequalsBA} of Definition~\ref{defi:BMstructureingroup} every pair $(a,b) \in A \times B$ occurs on the left hand side  in a relation of the form \eqref{eq:relationabba} and therefore the link $\Lk_x$ contains $L_{A,B}$.

If \eqref{eq:relationabba} holds, then the set of left hand sides of equivalent relations 
\[
\{ab, a'b^{-1},a^{-1}b',a'^{-1}b'^{-1}\}
\]
is a set of cardinality $4$, because $A$ and $B$ and $AB$ do not contain $2$-torsion by Definition~\ref{defi:BMstructureingroup} \ref{defiitem:AandBinvolution} + \ref{defiitem:AB2torsion} and the right hand sides of the equations are unique by 
Definition~\ref{defi:BMstructureingroup} \ref{defiitem:ABequalsBA}. Therefore $S_{A,B}$ only contains $(\#A \cdot \#B)/4$ squares. It follows that $\Lk_x$ has at most as many edges as $L_{A,B}$, and, since it contains $L_{A,B}$, must agree with it.
\end{proof}

\begin{rmk}
Property \ref{defiitem:AB2torsion} in Definition~\ref{defi:BMstructureingroup} avoids the following example that lead
in the construction \eqref{eq:constructionSAB} above to degenerate squares in $S_{A,B}$.
Let $G$ be the group with presentation
\[
G = \langle a,b \ | \ (ab)^2 = (a^{-1}b)^2 = 1 \rangle
\]
which is isomorphic to the iterated semi-direct product
\[
(\bZ \rtimes \bZ) \rtimes \bZ/2\bZ = (\langle a^2 \rangle \rtimes \langle b \rangle) \rtimes \langle ab \rangle
\]
due to $ba^2b^{-1} = a^{-2}$, and $(ab)a^2 (ab)^{-1} = a^{-2}$ and $(ab)b(ab)^{-1} = b^{-1}a^{-2}$. 
It is easy to check that the sets $A = \{a,a^{-1}\}$ and $B = \{b,b^{-1}\}$ satisfy all properties of 
Definition~\ref{defi:BMstructureingroup} except \eqref{eq:constructionSAB}. However, the corresponding square in $S_{A,B}$ has the shape
\[
\xymatrix@M0ex@R-2ex@C-2ex{
\bullet \ar[r]^(1){b} & \ar@{-}[r] & \bullet \ar[d]^(1){a} \\
\ar@{-}[u] & & \ar@{-}[d] \\
\bullet  \ar[u]^(1){a} & \ar@{-}[l] & \bullet \ar[l]^(1){b} \\
}
\]
and the link $\Lk_x$ is not complete bipartite containing double edges of the form:
\[
\xymatrix@M0.1ex@R-2ex@C-2ex{
\bullet \ar@{=}[rr]^(-0.3){a}^(1.3){b^{-1}}  & & \bullet \\
 \  \bullet  \ar@{=}[rr]_(-0.3){a^{-1}}_(1.3){b} & & \bullet
}
\]
\end{rmk}

\subsubsection{Universal VH-structure} The presentation of certain arithmetic lattices will be determined
in Section~\S\ref{sec:determinepresentations} based on the following proposition which states a criterion for a VH-structure in a group to be universal among all VH-structures in groups sharing the same one vertex square complex.

\begin{prop} \label{prop:mainabstractBMgroupresult}
Let $G$ be a group with a VH-structure $A,B$ in $G$ and a cellular action of $G$  on $M = T_{2m} \times T_{2n}$ such that for a distinguished vertex $v \in M$ the orbit $A.v$ (resp.\ $B.v$) agrees with the set of vertical (resp.\ horizontal) neighbours of $v$. Then the following holds.
\begin{enumera}
\item \label{propitem:abstracttransitiveaction}
$G$ acts transitively on vertices of $M$.
\item 
If the valency vector of the VH-structure is $(\#A ,\#B) = (2m,2n)$, then 
\begin{enumer}
\item  \label{propitem:simplytransitive}
the action of $G$ on the vertices of $M$ is free and simply transitive, 
\item \label{propitem:squarecomplexes}
there is a canonical isomorphism of  the quotient square complex 
\[
G \backslash M \simeq S_{A,B}
\]
with the square complex $S_{A,B}$ of construction \eqref{eq:constructionSAB}, and 
\item \label{propitem:presentation}
with the vertex $\bar v \in S_{A,B}$, the group $G \simeq  \pi_1(S_{A,B},\bar v)$ has the presentation
\[
\pi_1(S_{A,B},\bar v)  = \left\langle x_a, x_b \text{ for } a \in A, b \in B  
\left| 
\begin{array}{c}
x_a x_{a^{-1}} =  x_b x_{b^{-1}} = 1 \text{ for } a \in A, b \in B \\
x_ax_b = x_{b'}x_{a'} \text{ for } a,a' \in A, \text{ and } b,b' \in B \\
\text{ such that }  ab = b'a' \text{ in G}
\end{array}
\right.
\right\rangle
\]
mapping $x_a \mapsto a$ and $x_b \mapsto b$.
\end{enumer}
\end{enumera}
\end{prop}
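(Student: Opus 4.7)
Plan: The approach is to realise $G$ as the fundamental group of the one-vertex square complex $S_{A,B}$ built by construction \eqref{eq:constructionSAB}. For \ref{propitem:abstracttransitiveaction}, I would let $U := G.v \subseteq V(M)$ and observe that for any $g.v \in U$ the neighbours of $g.v$ are $g.(A.v) \sqcup g.(B.v) = (gA \cup gB).v \subseteq U$; hence $U$ is closed under taking neighbours and equals $V(M)$ by connectedness of $M$.

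For \ref{propitem:simplytransitive}--\ref{propitem:presentation} I would first note that the assumption $(\#A, \#B) = (2m, 2n)$ forces $a \mapsto a.v$ to be a bijection of $A$ onto the $2m$ vertical neighbours of $v$, and similarly for $B$. The link of the unique vertex of $S_{A,B}$ is then the complete bipartite graph $L_{A,B}$ of size $(2m, 2n)$, so Proposition~\ref{prop:VHstructureanduniversalcover} identifies $\tilde S_{A,B}$ with $T_{2m} \times T_{2n}$ and gives a free simply transitive action of $\tilde G := \pi_1(S_{A,B}, \bar v)$ on $V(\tilde S_{A,B})$. Writing down the standard presentation of $\tilde G$ as the fundamental group of a one-vertex $2$-complex (generators $=$ oriented edges modulo inversion, relations $=$ attached $2$-cells) reproduces exactly the presentation stated in \ref{propitem:presentation}, with generators $x_a, x_b$ corresponding to the edges of $S_{A,B}$. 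Every defining relation holds in $G$ by construction of $S_{A,B}$ (edge-inversions by Definition~\ref{defi:BMstructureingroup}\ref{defiitem:AandBinvolution}, and square relations $x_a x_b = x_{b'} x_{a'}$ because squares of $S_{A,B}$ encode precisely the relations $ab = b'a'$), and $A \cup B$ generates $G$, so there is a surjective homomorphism $\phi \colon \tilde G \surj G$ sending $x_a \mapsto a$, $x_b \mapsto b$.

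The key geometric step is to build a $\tilde G$-equivariant isomorphism $\Phi \colon \tilde S_{A,B} \stackrel{\sim}{\to} M$. Fixing a lift $\tilde v \in \tilde S_{A,B}$ of $\bar v$, I would define $\Phi(g.\tilde v) := \phi(g).v$ on vertices (well-defined by the free action of $\tilde G$ on $V(\tilde S_{A,B})$), and extend equivariantly to edges and squares through $\phi$. At the link of $\tilde v$ this $\Phi$ restricts to the bijection $A \sqcup B \to A.v \sqcup B.v$ onto the complete link of $v$ in $M$ by the valency hypothesis; $\tilde G$-equivariance transports this bijection to every vertex, so $\Phi$ is a local isomorphism. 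Both $\tilde S_{A,B}$ and $M$ are simply connected square complexes, hence $\Phi$ is a global isomorphism. Any $k \in \ker \phi$ then acts trivially on $M$ but freely on $\tilde S_{A,B}$ via $\Phi^{-1}$, forcing $k = 1$; so $\phi \colon \tilde G \xrightarrow{\sim} G$ is an isomorphism. This yields simultaneously the presentation \ref{propitem:presentation}, the simple transitivity \ref{propitem:simplytransitive}, and the identification $G \backslash M \cong \tilde G \backslash \tilde S_{A,B} = S_{A,B}$ in \ref{propitem:squarecomplexes}. The main obstacle I foresee is ensuring that $\Phi$ respects the vertical/horizontal bipartition at every vertex (so that local $4$-cycles really bound squares): this is automatic when $m \neq n$ since the two tree factors are distinguished by valency, and when $m = n$ must be propagated from $\tilde v$ along $\tilde G$-orbits using that each element of $A$ (respectively $B$) moves $v$ vertically (respectively horizontally).
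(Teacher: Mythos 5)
Your proposal is correct and follows essentially the same route as the paper: part (1) by closure of the orbit $G.v$ under neighbours, and part (2) by forming $S_{A,B}$, producing the surjection $\pi_1(S_{A,B},\bar v)\surj G$, and comparing the universal cover $T_{2m}\times T_{2n}$ of $S_{A,B}$ with $M$ via an equivariant locally isomorphic map between simply connected complexes, which yields simple transitivity, the quotient identification, and the presentation at once. The subtlety you flag (extending over squares, i.e.\ that image $4$-cycles bound squares of $M$) is exactly the point the paper also disposes of in one sentence, so your treatment is at the same level of detail and needs no change of strategy.
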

\begin{proof}
Since $v$ has all its neighbours in its orbit $G.v$, same same holds by structure transport via conjugation for all vertices in $G.v$. 
The only subsets of the set of vertices closed under taking neighbours are $M$ itself and the empty set because $M$ is connected. This proves assertion (1). 

\smallskip

We now prove assertion (2). Consider the square complex $S_{A,B}$ of construction \eqref{eq:constructionSAB} with one vertex and a VH-structure. By Proposition~\ref{prop:VHstructureanduniversalcover} its universal cover is isomorphic to 
\[
M' = T_{2m} \times T_{2n}
\]
(we use a different notation to distinguish the two copies of products of trees). The fundamental group 
$\pi_1(S_{A,B},\bar v)$ is generated by elements $x_a$ and $x_b$ for $a \in A$ and $b \in B$ representing loops along the corresponding oriented edges, and has the given presentation by construction, since squares in $S_{A,B}$ correspond to relations of the form $ab = b'a'$ in $G$. It follows that the maps $x_a \mapsto a$ and $x_b \mapsto b$ induce a well defined homomorphism
\[
\ph : \pi_1(S_{A,B},\bar v)   \surj G
\]
that is surjective because $A \cup B$ generates $G$. 
In this way $\pi_1(S_{A,B},\bar v)$ acts twice on a product of trees, namely on $M$  via $G$ and on $M'$ via identification with $\Aut(M'/S_{A,B})$ by means of a distinguished vertex $v' \in M'$. 

The map $v' \mapsto v \in M$ extends uniquely to a $\pi_1(S_{A,B},\bar v)$-equivariant map $f: M' \to M$ as follows. The action on $M'$ is free and transitive on vertices, so   the definition of $f$  is clear on the $0$-skeleton. The definition on the $1$-skeleton is by translating to an edge linking $v'$ with a neighbour and is consistent by the assumption on how $A,B \subset G$ moves $v$ to its neighbours. Extending $f$  to the $2$-skeleton, hence all of $M'$, works because $G$ has the same relations of length $4$ corresponding to squares at $v'$ resp.\ $v$. 
The induced map 
\[
f: M' \to M
\]
must be a covering map since this is true locally near $v' \mapsto v$ by construction and therefore by homogeneity everywhere. Because $M$ is simply connected and $M'$ is connected we conclude that $M ' \simeq M$ equivariantly by means of $f$. In particular, the map $\ph$ must be an isomorphism which shows  \ref{propitem:presentation}.  Furthermore, the $G$ action on $M$ is isomorphic via $(\ph,f)$ to the action on $M'$  of 
\[
\pi_1(S_{A,B},\bar v) \simeq \Aut(M'/S_{A,B})
\]
proving \ref{propitem:simplytransitive} and also inducing an isomorphism 
$G \backslash M \simeq S_{A,B}$
that verifies \ref{propitem:squarecomplexes}.
\end{proof}

\subsection{Mass formula for one vertex square complexes with VH-structure}  \label{sec:mass_formula}
Let $A$ (resp.\ $B$) be a set with fixed point free involution of size $2m$ (resp.\ $2n$). 
In order to count one vertex square complexes $S$ with VH-structure with vertical/horizontal partition $A \sqcup B$ of oriented edges  we introduce the generic matrix
\[
X = (x_{ab})_{a\in A, b \in B}
\]
with rows indexed by $A$ and columns indexed by  $B$ and with $(a,b)$-entry a formal variable $x_{ab}$. Let $X^t$ be the transpose of $X$, let $\tau_A$ (resp.\ $\tau_B$) be the permutation matrix for $e \mapsto e^{-1}$ for $A$ (resp.\ $B$). For a square $\square$  we set 
\[
x_\square = \prod_{e \in \square} x_e
\]
where the product ranges over the edges $e = (a,b)$ in the link of $S$ originating from $\square$ and $x_e = x_{ab}$. 
Then the sum of the $x_\square$, when $\square$ ranges over all possible squares with edges from $A \sqcup B$, reads 
\[
\sum_{\square} x_\square =  \frac{1}{4}\tr\big((\tau_A X \tau_B X^t\big)^2),
\]
and the number of  one vertex square complexes $S$ with VH-structure of partition size $(2m,2n)$ and edges labelled by $A$ and $B$  is given by 
\begin{equation} \label{eq:labeled_mass_formula}
\widetilde{{\rm BM}}_{m,n} = \frac{1}{(mn)!} \cdot \frac{\partial^{4mn}}{\prod_{a,b} \partial x_{ab} } \left( \frac{1}{4}\tr\big((\tau_A X \tau_B X^t\big)^2)\right)^{mn}. 
\end{equation}
Note that this is a constant polynomial.

We can turn this into a mass formula for the number of one vertex square complexes with VH-structure up to isomorphism where each square complex is counted with the inverse order of its group of automorphisms as its weight. We simply need to divide by the order of the universal relabelling 
\[
\#(\Aut(A,\tau_A) \times \Aut(B,\tau_B)) = 2^n(n)! \cdot 2^m(m)!.
\]
Hence the mass of one vertex square complexes with VH-structure is given by
\begin{equation} \label{eq:mass_formula}
{\rm BM}_{m,n} = \frac{1}{2^{n+m+2nm}(n)! \cdot (m)! \cdot (mn)!} \cdot \frac{\partial^{4mn}}{\prod_{a,b} \partial x_{ab} } \left( \tr\big((\tau_A X \tau_B X^t\big)^2)\right)^{mn}.
\end{equation}
Our formula \eqref{eq:labeled_mass_formula} reproduces the numerical values of $\widetilde{{\rm BM}}_{m,n}$ for small values $(2m,2n)$ that were computed by Rattaggi in \cite{rattaggi:thesis} table B.3. Here small means $mn \leq 10$.

\section{The quaternionic lattice} \label{sec:quatlat}

In this section we construct the quaternionic lattice $\Gamma$ in positive characteristic. Throughout the section $q$ will be a power of an odd prime $p > 2$. The construction is explicit and algebraic with some mild arithmetic input. It depends on a parameter 
\[
\tau  \in \bP^1(\bF_q) \setminus \{0,1,\infty\} = \bF_q^\ast \setminus \{1\}.
\]

\subsection{Enters the quaternion algebra} We fix once and for all a non-square
\[
c \in \bF_q^\ast \setminus (\bF_q^\ast)^2,
\]
and let $K = \bF_q(t)$ denote the rational function field over $\bF_q$ in the parameter $t$.
Let $D$ be the quaternion algebra over $K$ given as
\[
D =  \left(\frac{c,t(t-1)}{K}\right) 
\]
which in terms of non-commuting variables $Z,F$ has the presentation as an associative algebra
\[
D = K\{Z,F\}/(Z^2 = c, F^2 = t(t-1), ZF = - FZ).
\]
In fact, the quaternion algebra $D$ does not depend on the particular choice of the non-square $c$ up to isomorphism.

\begin{lem}
The  quaternion algebra $D$ ramifies exactly in $t = 0$ and $t = 1$.
\end{lem}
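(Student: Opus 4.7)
The plan is to compute the local invariant $\inv_v(D) \in \tfrac{1}{2}\bZ/\bZ$ at every place $v$ of $K = \bF_q(t)$ and verify that it vanishes outside of $\{t=0,t=1\}$. The places of $K$ are parametrised by monic irreducible polynomials $\pi \in \bF_q[t]$ plus the place at infinity. Since $q$ is odd, every residue field has odd characteristic, so the local Hilbert symbol $(c,t(t-1))_v$ can be read off from the tame symbol, and the lemma will follow once the four cases below are checked.

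First, for any place $v = v_\pi$ with $\pi \notin \{t, t-1\}$, both $c$ and $t(t-1)$ are units in $\dO_{K_v}$. The tame Hilbert symbol of two units in a local field of odd residue characteristic is trivial, so $D$ splits at every such $v$. Second, at $v = v_t$ the element $c$ is a unit in $\dO_{K_v}$ whose image in the residue field $\bF_q$ is a non-square; hence $K_v(\sqrt{c})$ is the unramified quadratic extension of $K_v$. The element $b = t(t-1)$ has valuation $1$ at $v_t$, i.e.\ odd valuation, so it is \emph{not} a norm from the unramified quadratic extension, which forces $\inv_{v_t}(D) = \tfrac{1}{2}$. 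The same argument applied at $v = v_{t-1}$ gives $\inv_{v_{t-1}}(D) = \tfrac{1}{2}$.

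Finally, at the place at infinity we have $v_\infty(t) = -1$, so $b = t(t-1) = t^2(1 - 1/t)$. Bilinearity of the Hilbert symbol and $(c,t^2)_\infty = 1$ give $(c,b)_\infty = (c, 1 - 1/t)_\infty$. The unit $1 - 1/t \in \dO_{K_\infty}^\ast$ reduces to $1$ in the residue field $\bF_q$, and since $q$ is odd, Hensel's lemma lifts this square root, so $1 - 1/t$ is a square in $K_\infty$. Consequently $(c,b)_\infty = 1$ and $D$ splits at $\infty$.

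Putting the four cases together, the ramification set of $D$ is exactly $\{v_t, v_{t-1}\}$, which in particular has even cardinality as the reciprocity law demands, confirming consistency. The only step that requires genuine care is the odd-valuation computation at $v_t$ and $v_{t-1}$; everything else is a routine reduction using that two-unit Hilbert symbols vanish in odd residue characteristic.
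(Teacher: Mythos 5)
Your proof is correct, and at the finite places it is the same computation as the paper's: outside $\{0,1,\infty\}$ both entries of the symbol are units so the tame symbol is trivial, and at $t=0$ and $t=1$ the symbol $(c,t(t-1))_v$ is nontrivial because $c$ reduces to a non-square in $\bF_q$ while $t(t-1)$ has odd valuation (your formulation via norms from the unramified quadratic extension $K_v(\sqrt{c})$ is equivalent to the paper's evaluation of the quadratic residue symbol of $c$). The only genuine difference is at infinity: the paper simply invokes the fact that the number of ramified places is even to conclude that $D$ is split at $\infty$, whereas you compute the symbol there directly, writing $t(t-1)=t^2(1-1/t)$ and using Hensel's lemma (valid since the residue characteristic is odd) to see that $1-1/t$ is a square in $K_\infty=\bF_q((1/t))$, so $(c,t(t-1))_\infty=1$. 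Your route is slightly longer but self-contained and does not rely on the reciprocity law, which you then get to use as a consistency check; the paper's parity argument is quicker but presupposes the global even-ramification theorem. Both are complete proofs.
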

\begin{proof}
The algebra $D$ ramifies at most in $\{0,1,\infty\}$, the locus where the parameters $c$ and $t(t-1)$ are not both units.
The local Hilbert symbol at the place $v$ either $t=0$ or $t = 1$ computes as
\[
\big(c,t(t-1)\big)_v =  c^{(q+1)/2}  = -1,
\]
so  $D$ ramifies there. 
The number of ramified places being even,  $D$ is unramified at $t = \infty$. 
\end{proof}

\subsubsection{The first splitting of $D$}
In order to compute with localizations of $D$ later, we introduce a splitting of $D$.
Let $\bF_q(z)$ be the quadratic extension of $K = \bF_q(t)$ determined by 
\[
(z-1)^2  = \frac{t-1}{t}.
\]
The non-trivial Galois automorphism is given by $z \mapsto 2-z$ and
\[
t = \frac{1}{z(2-z)}.
\]

\begin{lem} \label{lem:split_z}
The quaternion algebra $D$ splits over $\bF_q(z)$, i.e., $D$ has a $2$-dimensional representation $\rho_z : D \to \rM_2(\bF_q(z))$ over $\bF_q(z)$ as follows:
\begin{eqnarray*}
Z &  \mapsto & \rho_z(Z) = \matzz{0}{c}{1}{0}, \\
F & \mapsto & \rho_z(F) =  \matzz{t(z-1)}{0}{0}{t(1-z)}.
\end{eqnarray*}
\end{lem}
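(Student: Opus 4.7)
The plan is to verify directly that the given assignment on generators extends to a well-defined algebra homomorphism $\rho_z : D \to \rM_2(\bF_q(z))$, and then argue that it must be an isomorphism after base change.

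First I would check that the three defining relations of $D$ are satisfied by the images. Computing $\rho_z(Z)^2$ gives $\matzz{c}{0}{0}{c} = c \cdot I$, as required. Computing $\rho_z(F)^2$ yields the scalar matrix $t^2(z-1)^2 \cdot I$, and this equals $t(t-1) \cdot I$ precisely because the defining equation $(z-1)^2 = (t-1)/t$ of the extension $\bF_q(z)/K$ gives $t^2(z-1)^2 = t(t-1)$. The anticommutation relation follows from a direct $2\times 2$ product: $\rho_z(Z)\rho_z(F)$ and $\rho_z(F)\rho_z(Z)$ are both anti-diagonal, and their entries differ by a sign because conjugating the diagonal matrix $\rho_z(F)$ by the swap-like matrix $\rho_z(Z)$ exchanges its two diagonal entries, which are negatives of each other.

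Since the three generator relations are verified, the universal property of the free product presentation of $D$ produces a $K$-algebra homomorphism $\rho_z$ into $\rM_2(\bF_q(z))$. Because $D$ is a central simple algebra over $K$, it has no nontrivial two-sided ideals, so $\rho_z$ is either zero or injective; clearly it is nonzero. Extending scalars to $\bF_q(z)$, both $D \otimes_K \bF_q(z)$ and $\rM_2(\bF_q(z))$ have dimension $4$ over $\bF_q(z)$, so the induced map is an isomorphism and $D$ splits over $\bF_q(z)$.

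No step here should be a genuine obstacle; the only thing to be careful about is bookkeeping the sign in the anticommutator and using the defining relation of $z$ in the right direction when checking $\rho_z(F)^2 = t(t-1)$.
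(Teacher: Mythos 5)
Your proof is correct and follows the same route as the paper, which simply records this as an elementary computation in $\rM_2(\bF_q(z))$: you verify the three defining relations (using $(z-1)^2=(t-1)/t$ to get $\rho_z(F)^2=t(t-1)$) and conclude splitting by simplicity and a dimension count after extending scalars. Nothing further is needed.
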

\begin{proof}
This is an elementary computation in $\rM_2(\bF_q(z))$.
\end{proof}

\subsubsection{The algebraic group}
In the following, we will be concerned with arithmetic lattices for the algebraic group over $K$
\[
G = \PGL_{1,D}
\]
of the projective linear group of rank $1$ of $D$, a twisted form of the adjoint form $\PGL_2$ of $\SL_2$. 

\subsubsection{Choice of orders} Let us introduce the Dedekind ring
\[
R' = \bF_q[t,\frac{1}{t(t-\tau) }]
\]
of $S'$-integers of $\bF_q(t)$ where $S'$ is the set of places $\{0,\tau,\infty\}$,
note that $\tau \not= 0,1$. The quaternion algebra $D$ contains the $R'$-order
\[
\fO' = R' \oplus R' \cdot Z \oplus R' \cdot F \oplus R'\cdot ZF.
\] 
In this basis $1,Z,F,ZF$ the trace form has discriminant $16c^2 t^2 (t-1)^2$, so that the reduced discriminant of  $\fO'$ is the square-free ideal $(t(t-1))$ and $\fO'$ is a maximal $R'$-order of $D$. 
We further set 
\[
R = R'[\frac{1}{t-1}]
\]
and consider the scalar extension $\fO = \fO' \otimes_{R'} R$ which is even an Azumaya algebra over $R$, because both parameters 
$c, t(t-1) \in R^\ast$ are units in $R$ (and $p$ is odd). 

The choice of the basis $1,Z,F,ZF$ introduces naturally an integral structure on $G$ and thus arithmetic subgroups
\[
\fO'^\ast/R'^\ast = G(R') \ \subseteq \ \fO^\ast/R^\ast = G(R) \ \subseteq \ G(K) = \PGL_{1,D}(K) = D^\ast/K^\ast.
\]
The equalities follow because $R'$ and $R$ are principal ideal domains. 

\subsection{A constant quadratic subfield} Since $c$ is not a square in $\bF_q$, the element $Z$ generates a finite subfield of $q^2$ elements
\[
\bF_q[Z] \subset \fO'.
\]

\subsubsection{Dihedral group}
The multiplicative group of a finite field is cyclic, and we choose a generator
\[
\mgen \in  \bF_q[Z]^\ast.
\]
The quotient group 
$
\bF_q[Z]^\ast/\bF_q^\ast
$
is cyclic of order $q+1$ with generator $d$ the image $\mgen$.  

Since $F \in \fO$ and $F(Z)F^{-1} = -Z$, the induced map  
\[
F(-)F^{-1} : \bF_p[Z] \to \bF_p[Z]
\]
describes the non-trivial Galois automorphism of the quadratic field extension $\bF_q[Z]/\bF_q$, hence the $q$-Frobenius automorphism
\begin{equation} \label{eq:frobenius}
F(\mgen_1 + \mgen_2 Z)F^{-1} = (\mgen_1 + \mgen_2 Z)^q = \mgen_1 - \mgen_2 Z
\end{equation}
for $\mgen_1$, $\mgen_2$ in $\bF_q$. Let $s$ be the image of $F$ in $G(R) = \fO^\ast/R^\ast$.

\begin{lem} \label{lem:dihedralgroup}
The subgroup  
\[
\Delta :=  \langle d, s \rangle \subset  G(R) 
\]
is a dihedral group of order $2(q+1)$ with rotation $d$ and reflection $s$.
\end{lem}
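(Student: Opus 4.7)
The plan is to verify the three defining relations of the dihedral group $D_{q+1}$ for the pair $(d,s)$ and then show that the quotient by $R^\ast$ does not collapse the order any further. I will use throughout the explicit $R$-module decomposition $\fO = R \oplus RZ \oplus RF \oplus RZF$.

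First I would compute orders. For $s$: since $t, t-1, t-\tau$ are all units in $R = \bF_q[t, \tfrac{1}{t(t-1)(t-\tau)}]$, the relation $F^2 = t(t-1) \in R^\ast$ gives $s^2 = 1$ in $G(R) = \fO^\ast/R^\ast$; and $s \ne 1$ since $F$ lives in the $RF$-summand, hence is not an element of $R$. For $d$: the cyclic group $\bF_q[Z]^\ast$ has order $q^2-1$ with generator $\delta$, and I would argue
\[
\bF_q[Z] \cap R = \bF_q,
\]
since $\bF_q[Z]$ is a constant field of degree $2$ over $\bF_q$ while $R \subset K = \bF_q(t)$ has constant field $\bF_q$. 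Therefore $\delta^k \in R^\ast$ iff $\delta^k \in \bF_q^\ast$, iff $(q+1) \mid k$, so $d$ has exact order $q+1$.

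Next I would establish the dihedral relation. Applying \eqref{eq:frobenius} to $\delta$ yields $F\delta F^{-1} = \delta^q$ in $D^\ast$, so $sds^{-1} = d^q$ in $G(R)$; combining with $d^{q+1} = 1$ gives $sds^{-1} = d^{-1}$. At this point $\Delta$ is a quotient of $D_{q+1}$ of order dividing $2(q+1)$.

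The last step, and the main obstacle, is to show that no further collapse occurs, i.e., that $s \notin \langle d \rangle$. Here I would use the $R$-module decomposition: if $s = d^k$ in $G(R)$, then $F = r \delta^k$ for some $r \in R^\ast$, but $\delta^k \in \bF_q[Z] \subset R \oplus RZ$ whereas $F$ has a nonzero component in the independent $RF$-summand, a contradiction. Hence $\langle d \rangle$ has index exactly $2$ in $\Delta$, giving $|\Delta| = 2(q+1)$, and $\Delta$ is the dihedral group as claimed.
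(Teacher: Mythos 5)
Your proof is correct and follows essentially the same route as the paper: verify $d^{q+1}=s^2=1$ and the conjugation relation $sds^{-1}=d^{-1}$ via the Frobenius formula \eqref{eq:frobenius}. The only difference is that you spell out the non-collapse steps (the exact order of $d$ via $\bF_q[Z]\cap R=\bF_q$, and $s\notin\langle d\rangle$ via the $R$-module decomposition of $\fO$), which the paper treats as clear.
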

\begin{proof}
The relations $d^{q+1} = 1$ and $s^2 = 1$ are clear, and 
$s d s = s(d)s^{-1} = d^{q} = d^{-1}$ by \eqref{eq:frobenius}. 
\end{proof}

\subsubsection{Elements of norm $1$ and Hilbert's Theorem 90}
Let us introduce homogeneous coordinates $[W_0:W_1]$ on $\bP^1$ and set formally $W = W_0 + W_1 Z$. We parametrize the projective space $\bP(\bF_q[Z])$ of the $2$-dimensional $\bF_q$-vector space $\bF_q[Z]$ as
\begin{eqnarray*}
\bP^1(\bF_q) \quad  = & \bP(\bF_q[Z]) & = \quad \bF_q[Z]^\ast/\bF_q^\ast, \\ 
{[w_0: w_1]} \quad \leftrightarrow & w_0 + w_1Z & \leftrightarrow \quad w.
\end{eqnarray*}
The Galois conjugation automorphism $w \mapsto \bar w$ of $\bF_q[Z]^\ast$ is the linear map 
\[
W = [W_0:W_1] \mapsto \bar W = [W_0:-W_1].
\]
Let us further introduce the map 
\begin{eqnarray*}
\wp : \bF_q[Z]^\ast/\bF_q^\ast & \to &  \bF_q[Z]^\ast, \\  
w_0 + w_1Z  & \mapsto & \wp(w_0+w_1Z) =  \frac{w_0+w_1Z}{w_0 - w_1Z}, \notag
\end{eqnarray*}
or in short $\wp(w) = w/\bar w$, and let $w \mapsto N(w) = w \cdot \bar w = w^{1+q}$ be the norm map 
\[
N = N_{\bF_q[Z]/\bF_q} : \bF_q[Z]^\ast \to \bF_q^\ast.
\]

\begin{lem} \label{lem:hilb90}
We have a short exact sequence
\[
1 \to \bF_q[Z]^\ast/\bF_q^\ast \xrightarrow{\wp} \bF_q[Z]^\ast \xrightarrow{N} \bF_q^\ast \to 1.
\]
\end{lem}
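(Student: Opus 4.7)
The plan is to recognize this sequence as the explicit form of Hilbert's Theorem 90 for the cyclic quadratic extension $\bF_q[Z] \simeq \bF_{q^2}$ of $\bF_q$, whose non-trivial Galois automorphism is $w \mapsto \bar w$. Indeed, since $c$ is a non-square one has $Z^q = c^{(q-1)/2} Z = -Z$, so the $q$-Frobenius acts on $\bF_q[Z]$ as $w_0 + w_1 Z \mapsto w_0 - w_1 Z = \bar w$, which is consistent with the calculation \eqref{eq:frobenius}. I would then verify in turn: well-definedness and injectivity of $\wp$, the inclusion $\im(\wp) \subseteq \ker(N)$, surjectivity of $N$, and finally exactness at the middle by a counting argument.

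For $\lambda \in \bF_q^\ast$ one has $\bar \lambda = \lambda$, so $\wp(\lambda w) = \lambda w/\lambda \bar w = w/\bar w$ and $\wp$ descends to $\bF_q[Z]^\ast/\bF_q^\ast$. If $\wp(w) = 1$, then $w = \bar w$, hence $w \in \bF_q^\ast$ and its class in the quotient is trivial, giving injectivity. The composition $N \circ \wp$ sends $w$ to $(w/\bar w)(\bar w/w) = 1$, so $\im(\wp) \subseteq \ker(N)$. For surjectivity of $N$, since $\bF_q[Z]^\ast$ is cyclic of order $q^2-1$ and $N(w) = w^{1+q}$, the image has order $(q^2-1)/(q+1) = q - 1 = |\bF_q^\ast|$.

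Exactness at the middle then follows by counting: the source of $\wp$ has order $(q^2-1)/(q-1) = q+1$, and by injectivity $\im(\wp)$ does too; the kernel of the $(1+q)$-th power map on a cyclic group of order $q^2-1$ also has order $q+1$. Hence the inclusion $\im(\wp) \subseteq \ker(N)$ is an equality. There is no real obstacle here: the whole argument is Hilbert 90 made explicit, reduced to arithmetic in a single cyclic group of order $q^2 - 1$, and the only point one must take care with is the well-definedness of $\wp$ modulo $\bF_q^\ast$, which rests on $\bF_q$ being exactly the fixed field of conjugation.
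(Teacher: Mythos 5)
Your proof is correct and follows essentially the same route as the paper, which simply invokes Hilbert's Theorem~90 together with counting orders in cyclic groups; your write-up just makes the well-definedness, injectivity, and order counts explicit. No gaps.
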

\begin{proof}
Field extensions of finite fields are all Galois with cyclic Galois groups. So the assertion is an immediate consequence of Hilbert's Theorem 90 and counting of orders of cyclic groups. 
\end{proof}

The norm-$1$ torus $\bT$ of the quadratic field extension $\bF_q[Z]/\bF_q$ is the algebraic torus 
\[
\bT = \{U_0 + U_1Z \ ; \ U_0^2 - c U_1^2 = 1\}.
\]
Lemma~\ref{lem:hilb90} provides an explicit parametrization of its rational points
\[
\bF_q[Z]^\ast/\bF_q^\ast = \bP(\bF_q[Z]) \xrightarrow{\wp} \ker(N) = \bT(\bF_q).
\]

\subsubsection{Elements of fixed norm} \label{sec:fixednorm}

Let $u \in \bF_q^\ast$ be a fixed element. The projective variety
\[
\bT_u = \{[W_0:W_1:V] \ ; \ W_0^2 - c W_1^2 = u V^2\} \subseteq \bP^2_{\bF_q},
\]
is a smooth conic over $\bF_q$ and thus isomorphic to the projective line $\bP^1_{\bF_q}$. Because $c$ is not a square in $\bF_q$, the rational points of $\bT_u$ have $V \not=0$ and thus 
\[
\bT_u(\bF_q) = \{w = w_0 + w_1Z \in \bF_q[Z] \ ; \ N(w_0+w_1Z) = u\}
\]
describes the set of elements of $\bF_q[Z]$ of fixed norm $u$.

We set formally $W = W_0 + W_1 Z$ and $\bar W = W_0 - W_1Z$, so that the norm map becomes the multiplicative map 
\[
N(W) = W \cdot \bar W = W_0^2 - cW_1^2
\]
and 
\[
\bT_u = \{[W:V] \ ; \ N(W) = uV^2\}.
\]
Conjugation $w \mapsto \bar w$ reads in this description in algebraic coordinates
\[
 [W:V]  = [W_0:W_1:V] \mapsto [\bar W:V]  = [W_0:-W_1:V]
\]
and yields an automorphism of $\bT_u$ due to $N(W) = N(\bar W)$. 

\begin{lem}  \label{lem:sigma_xi}
For $u \in \bF_q^\ast$ and $\xi \in \bF_q[Z]$, the map 
\[
\sigma_\xi: \bT_u \to \bT_u
\]
given in the formal variable $W = W_0 + W_1 Z$ by 
\[
[W:V] \mapsto [\bar W \cdot \frac{W - \xi V}{\bar W - \bar{\xi} V} : V]
\]
defines an automorphism which on $\bF_q$-rational points $w = [w : 1]$ induces the map
\[
w \mapsto \bar{w} \cdot \wp(w-\xi)  = w^q (w - \xi)^{1-q} = \bar{w} \cdot \frac{w - \xi}{\bar{w} - \bar{\xi}}. %
\]
\end{lem}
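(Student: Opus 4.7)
The plan is to verify three things in turn: that the displayed formula defines a morphism of $\bF_q$-varieties $\bT_u \to \bT_u$, that this morphism is an automorphism, and that its effect on $\bF_q$-rational points agrees with all three displayed expressions. The guiding principle throughout will be that on $\bT_u$ one has $w \bar w = u$, which converts the apparently $\bF_q[Z]$-rational formula into something defined over $\bF_q$ and identifies it with a simple M\"obius transformation in the affine chart.

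First I would rationalize the denominator of the fraction $(W-\xi V)/(\bar W - \bar \xi V)$ by multiplying numerator and denominator by $W - \xi V$ and using $(W-\xi V)(\bar W - \bar \xi V) = N(W - \xi V) \in \bF_q[W_0, W_1, V]$. This rewrites $\sigma_\xi$ in algebraic coordinates as
\[
[W_0:W_1:V] \mapsto \bigl[\bar W \cdot (W-\xi V)^2 \ : \ V \cdot N(W-\xi V)\bigr],
\]
and splitting $\bar W \cdot (W-\xi V)^2$ into its $1$- and $Z$-components produces $\bP^2$-coordinates polynomial in $W_0, W_1, V$ with $\bF_q$-coefficients, so the map is defined over $\bF_q$. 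For the image to lie in $\bT_u$, I would note directly from the original quotient that $\bar{W'} = W (\bar W - \bar \xi V)/(W - \xi V)$, so
\[
W' \bar{W'} = W \bar W = u V^2 = u (V')^2
\]
on $\bT_u$, as required.

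To show that $\sigma_\xi$ is an automorphism, I would pass to the affine chart $V = 1$ and use $\bar w = u/w$ (valid on $\bT_u$) to rewrite the map as the M\"obius transformation $w \mapsto u(w-\xi)/(u - w \bar\xi)$, whose determinant equals $u(u - N(\xi))$. Provided $N(\xi) \neq u$ this is nonzero, hence $\sigma_\xi$ is an automorphism of $\bP^1 \simeq \bT_u$. Finally, on $\bT_u(\bF_q)$ the identities $\bar w = u/w$, $w^q = \bar w$, $\bar \xi = \xi^q$, and $(w-\xi)^q = \bar w - \bar \xi$ combine to give
\[
\bar w \cdot \wp(w-\xi) = \bar w \cdot \frac{w-\xi}{\bar w - \bar \xi} = w^q (w-\xi)^{1-q} = \frac{u(w-\xi)}{u - w \bar \xi},
\]
confirming all three displayed expressions. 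The only real subtlety I expect is the degenerate locus $N(\xi) = u$: there the M\"obius determinant vanishes, the formula collapses to the constant value $-\xi$, and $\sigma_\xi$ fails to be an automorphism; this case is implicitly excluded from the statement, the relevant range for later applications being $\xi$ with $N(\xi) \neq u$.
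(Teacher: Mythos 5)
Your argument is correct and essentially the paper's: the norm computation $W'\bar{W'} = W\bar{W} = uV^2$ is the paper's well-definedness check, and using $w\bar{w}=u$ to rewrite $\sigma_\xi$ as the degree-one M\"obius map $w \mapsto u(w-\xi)/(u-w\bar{\xi})$ is exactly the paper's rewriting $[\bar W\,\tfrac{W-\xi V}{\bar W-\bar{\xi}V}:V]=[\tfrac{uV-\xi\bar W}{\bar W-\bar{\xi}V}:1]$, from which "degree one, hence an automorphism" is concluded. Your additional remark about the degenerate locus $N(\xi)=u$, where the map collapses to the constant $-\xi$ and is not an automorphism, is a correct caveat that the paper's one-line proof silently passes over; it is harmless in context since every later use has $N(\xi)=-c$ acting on $\bT_{\frac{c\tau}{1-\tau}}$ or $N(\eta)=\frac{c\tau}{1-\tau}$ acting on $\bT_{-c}$, so $N(\xi)\neq u$ there.
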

\begin{proof}
The map is well defined because
\[
N\left(\bar W \cdot \frac{W - \xi V}{\bar W - \bar{\xi} V} \right) = N(W)
\]
and by 
\[
[\bar W \cdot \frac{W - \xi V}{\bar W - \bar{\xi} V} : V] = [\frac{uV - \xi \bar W}{\bar W - \bar{\xi} V} : 1]
\]
of degree $1$, hence indeed an automorphism of $\bT_u$. The rest is obvious.
\end{proof}

\subsubsection{Choice of  coordinates} Let $\mu \in \bF_q[Z]$ be an element with $N(\mu) = u$. Then the map 
\begin{equation} \label{eq:deficoordinateonTu}
w \mapsto \mu \wp(w),
\end{equation}
or in homogeneous coordinates
\[
[W_0 : W_1] \mapsto  [\mu \cdot \wp(W): 1] = [\mu \cdot W^2 : N(W)],
\]
defines an algebraic isomorphism 
\[
\bP^1 \simeq \underline{\bP(\bF_q[Z])} \simeq \bT_u.
\]
Here $\underline{\bP(\bF_q[Z])}$ denotes the projective space variety associated to the $\bF_q$-vector space 
$\bF_q[Z]$.  Associated with the choice of $\omega$ and the associated coordinate $w$ on $\bT_u$ is an isomorphism 
\[
\Aut_{\bF_q}(\bT_u) = \PGL(\bF_q[Z]).
\]

\begin{lem} \label{lem:detsigma_xi}
In the coordinate $W$ introduced in \eqref{eq:deficoordinateonTu} the map $\sigma_\xi$ of Lemma~\ref{lem:sigma_xi} has the form
\[
\sigma_\xi : W \mapsto W - \frac{\xi}{\mu}  \cdot \bar W
\]
and determinant modulo squares represented by 
\[
\det(\sigma_\xi) = 1 - N(\xi/\mu).
\]
\end{lem}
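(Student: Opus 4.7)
\textbf{Proof plan for Lemma~\ref{lem:detsigma_xi}.} The statement has two parts: first, to rewrite $\sigma_\xi$ in the coordinate $W$ on $\bP^1 \simeq \bT_u$ coming from the isomorphism $w \mapsto \mu \wp(w)$; second, to compute the determinant of the resulting map (which is $\bF_q$-linear on $\bF_q[Z]$) modulo squares. Both parts are direct calculations, and the ``determinant mod squares'' is forced on us because $\sigma_\xi$ is only canonical as an element of $\PGL(\bF_q[Z]) = \Aut_{\bF_q}(\bT_u)$.

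For the first part, the plan is to substitute the coordinate change into the formula of Lemma~\ref{lem:sigma_xi}. A point with coordinate $w$ corresponds to $y := \mu w/\bar w \in \bT_u$, and its conjugate to $\bar y = \bar\mu \bar w /w$. Plugging these into $\sigma_\xi(y) = \bar y \cdot (y - \xi)/(\bar y - \bar \xi)$ and clearing denominators yields
\[
\sigma_\xi(y) \;=\; \frac{\bar\mu\,(\mu w - \xi \bar w)}{\bar\mu\bar w - \bar\xi w},
\]
which one immediately recognizes as $\mu w'/\bar{w'}$ for $w' = w - (\xi/\mu)\bar w$. Thus in the $W$-coordinate, $\sigma_\xi$ is the map $W \mapsto W - (\xi/\mu)\bar W$, as claimed.

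For the second part, the map $W \mapsto W - (\xi/\mu)\bar W$ is $\bF_q$-linear because $W \mapsto \bar W$ is the nontrivial $\bF_q$-linear involution of $\bF_q[Z]$. Writing $\xi/\mu = a + bZ$ with $a,b \in \bF_q$, I would evaluate the map on the basis $\{1,Z\}$: it sends $1 \mapsto (1-a) - bZ$ and $Z \mapsto bc + (1+a)Z$, using $Z^2 = c$. In this basis the matrix is
\[
\begin{pmatrix} 1-a & bc \\ -b & 1+a \end{pmatrix},
\]
whose determinant is $(1-a)(1+a) + b^2c = 1 - (a^2 - cb^2) = 1 - N(\xi/\mu)$.

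There is no real obstacle here; the only subtle point is that the determinant is only well-defined modulo $(\bF_q^\ast)^2$ because any rescaling $\mu \mapsto \lambda \mu$ with $\lambda \in \bF_q^\ast$ multiplies the linear representative of $\sigma_\xi$ by $\lambda$ and hence scales its determinant by $\lambda^2$; but $N(\xi/\mu) \mod (\bF_q^\ast)^2$ is insensitive to this rescaling, so the formula is consistent with the ambiguity.
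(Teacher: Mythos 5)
Your proof is correct and takes essentially the same route as the paper: both substitute the coordinate $w \mapsto \mu\,\wp(w)$ of \eqref{eq:deficoordinateonTu} into the formula of Lemma~\ref{lem:sigma_xi} to obtain $W \mapsto W - \tfrac{\xi}{\mu}\bar W$ (the paper does this in homogeneous coordinates, you on rational points, which amounts to the same computation), and then compute the determinant of this $\bF_q$-linear lift in the basis $1,Z$, getting $1 - N(\xi/\mu)$. Only your closing aside is slightly off: rescaling $\mu \mapsto \lambda\mu$ with $\lambda \in \bF_q^\ast$ neither preserves the condition $N(\mu)=u$ nor multiplies the linear representative by $\lambda$; the mod-squares ambiguity is simply that $\sigma_\xi$ is an element of $\PGL(\bF_q[Z])$, so its linear lifts differ by scalars and the determinant changes by squares --- but this remark is not needed for the proof and does not affect its correctness.
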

\begin{proof}
We compute in terms of the homogeneous coordinate $W$ on $\underline{\bP(\bF_q[Z])}$ with $\mu \wp(W)$ on $\bT_u$
\[
\sigma_\xi([\mu \cdot \wp(W) : 1]) = [ \bar \mu \cdot \bar W^2   \cdot \wp(\mu W^2 - \xi N(W))        : N(W) ] 
= [\mu \cdot \wp(W - \frac{\xi}{\mu} \bar W) : 1] 
\]
so that in terms of the coordinate $W$ the automorphism $\sigma_\xi$ has the claimed form.

We write $\xi/\mu = \omega_0 + \omega_1 Z$ and lift $\sigma_\xi$ to the linear map 
\[
w \mapsto w - \frac{\xi}{\mu} \bar w
\]
in $\GL(\bF_q[Z])$.  Its determinant is computed via the matrix with respect to the basis $1,Z$ as
\[
\det(\sigma_\xi) \equiv \det \matzz{1 - \omega_0}{-\omega_1}{c \cdot \omega_1}{1+\omega_0}
= 1 - (\omega_0^2 - c \omega_1^2) = 1 - N(\xi/\mu).
\]
This completes the proof.
\end{proof}

\subsubsection{Second splitting of $D$} We fix once and for all an element  $\zeta = \zeta_0 + \zeta_1 Z \in \bF_q[Z]$ such that 
\[
N(\zeta_0 + \zeta_1 Z) = \frac{\tau -1}{\tau}.
\]
Note that $\tau \not= 0,1$, so such a choice can be made by Lemma~\ref{lem:hilb90}.

Let $\bF_q(y)/\bF_q(t)$ be the quadratic extension  determined by 
\[
(y-1)^2 =  \frac{\tau}{\tau-1} \cdot \frac{t-1}{t}.
\]
The non-trivial Galois automorphism is again given by $y \mapsto 2-y$ and 
\[
t = \frac{\tau}{\tau - (\tau-1)(y-1)^2}.
\]
\begin{lem} \label{lem:split_y}
The quaternion algebra $D$ splits over $\bF_q(y)$, i.e., $D$ has a $2$-dimensional representation over $\bF_q(y)$ as follows:
\[
\rho_y : D \to \rM_2(\bF_q(y)),
\]
\begin{eqnarray*}
Z & \mapsto &  \rho_y(Z) = \matzz{0}{c} {1}{0}, \\                  
F & \mapsto &  \rho_y(F) = \matzz{\zeta_0 t (y-1)}{c\zeta_1 t (1-y)}{\zeta_1 t (y-1) }{\zeta_0 t (1-y)} .
\end{eqnarray*}
\end{lem}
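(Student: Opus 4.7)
The plan is to reduce this to a direct computation verifying the three defining relations of $D$, namely $Z^2 = c$, $F^2 = t(t-1)$, and $ZF = -FZ$. Since $D$ is presented as the free associative $K$-algebra on $Z,F$ modulo exactly these relations, any map sending $Z,F$ to matrices satisfying the analogous relations extends uniquely to an algebra homomorphism $\rho_y : D \to \rM_2(\bF_q(y))$. Because $D$ is a central simple $K$-algebra, such a nonzero homomorphism is automatically injective, and so upon tensoring with $\bF_q(y)$ over $K$ it gives an isomorphism $D \otimes_K \bF_q(y) \simeq \rM_2(\bF_q(y))$, which is precisely the statement that $\bF_q(y)$ splits $D$.

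The relation $\rho_y(Z)^2 = cI$ is immediate from $\matzz{0}{c}{1}{0}^2 = \matzz{c}{0}{0}{c}$. For the anticommutation $\rho_y(Z)\rho_y(F) = -\rho_y(F)\rho_y(Z)$, I would observe that left multiplication by $\rho_y(Z)$ swaps rows and multiplies the new top row by $c$, while right multiplication by $\rho_y(Z)$ swaps columns and multiplies the new left column by $c$. The specific shape of $\rho_y(F)$ (with the structural symmetry that entries $(1,2)$ and $(2,1)$ differ by a factor of $c$, and with $(y-1)$ appearing in $(1,1),(2,1)$ and $(1-y) = -(y-1)$ in $(1,2),(2,2)$) then makes the two products negatives of one another, which I would verify by writing out the four entries.

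The essential computation is $\rho_y(F)^2 = t(t-1)I$. I would expand the $2\times 2$ square. The off-diagonal entries of $\rho_y(F)^2$ collect a factor of $(y-1)+(1-y)=0$ and therefore vanish. Each diagonal entry simplifies to $t^2(y-1)^2(\zeta_0^2 - c\zeta_1^2) = t^2 (y-1)^2 N(\zeta)$. Now I invoke the two defining relations chosen precisely for this purpose: by construction $N(\zeta) = (\tau-1)/\tau$, and by definition of $\bF_q(y)/K$,
\[
(y-1)^2 = \frac{\tau}{\tau-1} \cdot \frac{t-1}{t}.
\]
Substituting, the two rational factors cancel and leave $t^2 \cdot \frac{t-1}{t} = t(t-1)$, as required.

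No step here is a real obstacle; this is a direct verification designed so that the choices of $\zeta$ and of $y$ make the computation close. The only thing worth flagging is that one should not forget the final sentence of logic: having checked the three defining relations, one gets a homomorphism $\rho_y$ of $K$-algebras from $D$; since $D$ is simple and $\rho_y \neq 0$, it is injective, and the induced map from the four-dimensional $\bF_q(y)$-algebra $D \otimes_K \bF_q(y)$ to $\rM_2(\bF_q(y))$ is therefore an isomorphism by dimension count, giving the splitting.
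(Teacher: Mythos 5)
Your proof is correct and matches the paper's: the paper disposes of this lemma with exactly the same direct verification of the relations $Z^2=c$, $F^2=t(t-1)$, $ZF=-FZ$ in $\rM_2(\bF_q(y))$, merely recording it as ``an elementary computation.'' Your explicit check (including the cancellation $t^2(y-1)^2N(\zeta)=t(t-1)$ forced by the choices of $\zeta$ and $y$) and the closing remark on simplicity plus dimension count supply precisely the details the paper leaves implicit.
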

\begin{proof}
This is an elementary computation in $\rM_2(\bF_q(y))$.
\end{proof}

\subsection{Enters the lattice} \label{sec:lattice}

Crucially for the whole construction are the properties of the following quaternions, for $\xi \in \bF_q[Z]$, 
\[
\gamma_\xi = t Z + \xi F
\]
 of reduced norm
\begin{equation} \label{eq:reducednormalphaxi}
\Nrd(\gamma_\xi) =   t \big(N(\xi) - (N(\xi)+c) t\big).
\end{equation}
Since $\gamma_\xi$ is purely imaginary, its square belongs to the center of $D$ and so its image in $G(K)$ is a $2$-torsion element.

\begin{defi} \label{defi:AB}
\begin{enumera}
\item We set $N_c = \bT_{-c}(\bF_q) = \{\xi \in \bF_q[Z]^\ast \ ; \ N(\xi) = -c\}$ and 
\[
M_\tau = \bT_{ \frac{c\tau}{1-\tau}}(\bF_q) = \{\eta \in \bF_q[Z]^\ast \ ; \ N(\eta) =  \frac{c\tau}{1-\tau}\}.
\]
\item
First, for $\xi \in N_c$ (resp.\ $\eta \in M_\tau$) we set 
\[
\alpha_\xi = \gamma_\xi Z \quad \text{(resp.\ } \beta_\eta = \gamma_\eta Z \text{)}
\]
and set $a_\xi$ (resp.\ $b_\eta$) for  its image in $G(K)$. Secondly, we define the sets
\begin{eqnarray} \label{eq:AB}
A  & = & \{a_\xi \ ; \ \xi \in N_c  \}, \\
B  & = & \{b_\eta \ ; \ \eta \in M_\tau  \}, \notag
\end{eqnarray}
considered as subsets of $G(K) = D^\ast/K^\ast$. If we want to emphasize the dependence on $\tau$ we write $A = A_\tau$ and $B = B_\tau$.
\end{enumera}
\end{defi}

By \eqref{eq:reducednormalphaxi}  and $\Nrd(Z) = -c$ the reduced norms
\begin{eqnarray} \label{eq:normab}
\Nrd(\alpha_\xi) & = & c^2 \cdot t , \\
\Nrd(\beta_\eta) & = &  \frac{c^2}{1-\tau} \cdot ( t - \tau) \cdot t  \notag
\end{eqnarray}
are units in $R'$, hence $\alpha_\xi, \beta_\eta \in \fO'^\ast$
and therefore 
\[
A,B \subset G(R').
\]

\begin{lem} \label{lem:sizeAB}
The sets $A$ and $B$ are disjoint and of size $q+1$.
\end{lem}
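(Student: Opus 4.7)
The plan is to handle sizes and disjointness separately.

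For the sizes, first observe that by Lemma~\ref{lem:hilb90} the norm map $N \colon \bF_q[Z]^\ast \to \bF_q^\ast$ is surjective with kernel of order $(q^2-1)/(q-1) = q+1$, so $N_c$ and $M_\tau$ each have exactly $q+1$ elements (both are non-empty torsors under $\ker(N)$). It remains to check that the maps $\xi \mapsto a_\xi$ and $\eta \mapsto b_\eta$ are injective. Writing $\xi = \xi_0 + \xi_1 Z$ with $\xi_0,\xi_1 \in \bF_q$, expand
\[
\alpha_\xi = (tZ + \xi F)Z = tc - c\xi_1 F - \xi_0 ZF
\]
in the $K$-basis $1,Z,F,ZF$ of $D$. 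If $a_\xi = a_{\xi'}$ in $G(K) = D^\ast/K^\ast$, then $\alpha_\xi = \lambda \alpha_{\xi'}$ for some $\lambda \in K^\ast$; comparing the constant coefficient gives $\lambda = 1$ and then the remaining coordinates yield $\xi = \xi'$. The same argument works for $\eta \mapsto b_\eta$.

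For disjointness, I would compare reduced norms modulo squares in $K^\ast$. If $a_\xi = b_\eta$, then $\alpha_\xi = \lambda \beta_\eta$ for some $\lambda \in K^\ast$, forcing
\[
\Nrd(\alpha_\xi) = \lambda^2 \Nrd(\beta_\eta).
\]
By \eqref{eq:normab}, this would require
\[
c^2 t = \lambda^2 \cdot \frac{c^2}{1-\tau}\,(t-\tau)\,t,
\qquad \text{i.e.,} \qquad \lambda^2 = \frac{1-\tau}{t-\tau}
\]
in $K^\ast = \bF_q(t)^\ast$. But $t - \tau$ is an irreducible polynomial and $1-\tau \in \bF_q^\ast$, so the right hand side has odd $(t-\tau)$-adic valuation and is not a square. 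Hence no such $\lambda$ exists, and $A \cap B = \emptyset$.

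No serious obstacle is expected; the only point worth a second look is verifying the injectivity of the parametrizations, where one must be careful that the coefficient of $1$ in the basis expansion of $\alpha_\xi$ (and $\beta_\eta$) is a non-zero element of $K$ independent of the parameter, which pins down the scalar $\lambda$.
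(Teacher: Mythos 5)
Your proof is correct. The counting and injectivity parts coincide in substance with the paper's: the paper gets $\#N_c=\#M_\tau=q+1$ by viewing them as the $\bF_q$-points of a smooth conic isomorphic to $\bP^1$, which is the same count your Hilbert~90/torsor argument (Lemma~\ref{lem:hilb90}) delivers, and its injectivity step is the same coefficient comparison you make, only carried out for $\gamma_\xi=tZ+\xi F$ instead of $\alpha_\xi=\gamma_\xi Z$ (comparing the $Z$-coefficient forces the scalar to be $1$, then the $F$-coefficient forces $\xi=\xi'$). Where you genuinely diverge is the disjointness of $A$ and $B$: the paper observes that the two norm conditions are incompatible, i.e.\ $-c\neq c\tau/(1-\tau)$ since $\tau\neq 1$, so $N_c\cap M_\tau=\emptyset$, and then reuses the injectivity of $\xi\mapsto\gamma_\xi$ modulo $K^\ast$ on all of $\bF_q[Z]^\ast$ to conclude $A\cap B=\emptyset$; you instead separate $A$ from $B$ by the invariant $\Nrd$ taken in $K^\ast/(K^\ast)^2$, using \eqref{eq:normab} and the odd $(t-\tau)$-adic valuation of $(1-\tau)/(t-\tau)$. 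Both are valid: the paper's route is marginally more elementary (no norms needed beyond what is already computed), while yours isolates a genuine invariant distinguishing the two generating sets and anticipates the reduced-norm-modulo-squares arguments the paper uses later (e.g.\ to show $s\notin G(R')$ in Theorem~\ref{thm:presentationLambda} and in the remark after Proposition~\ref{prop:uniformabelianquot}).
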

\begin{proof}
Since $c \not= 0$ we never have $c\tau/(1- \tau) = -c$ and thus $N_c \cap M_\tau = \emptyset$. Since both $N_c$ and $M_\tau$ are the sets of $\bF_q$-rational points of an algebraic curve isomorphic to $\bP^1$, the lemma follows if the map 
\[
\xi \mapsto \gamma_\xi
\]
is injective as a map $\bF_q[Z]^\ast \to D^\ast/K^\ast$. By assuming the contrary we have $f(t) \in K^\ast$ and $\xi \not=\eta$ in $\bF_q[Z]^\ast$ with 
\[
tZ + \xi F = f(t) \cdot (tZ + \eta F).
\]
This implies $f(t) = 1$ and thus a contradiction.
\end{proof}

\begin{prop} \label{prop:sactsAB}
The following describes the dihedral group $\Delta$ relative the sets $A$ and $B$:
\begin{enumera}
\item The set $A$ (resp.\ $B$) is closed under taking inverses.
\item 
The dihedral group $\Delta$ acts by conjugation on $A$ (resp.\ $B$) as on the regular $(q+1)$-gon. 
\item For  $\xi \in N_c$ and $\eta \in M_\tau$ 
\begin{eqnarray}  \label{eq:actionbysAB}
 s a_\xi s & = & a_{- \bar{\xi}}, \\
 s b_\eta s & = & b_{-\bar{\eta}}. \notag
\end{eqnarray}
\item
The centralizer of $a_\xi$ (resp.\ $b_\eta$) in $\Delta$ is the subgroup $\langle s d^r \rangle$ with $r \in \bZ/(q+1)\bZ$ determined by  $\wp(\xi \delta^r) = -1$ (resp.\ $\wp(\eta \delta^r) = -1$).
\end{enumera}
\end{prop}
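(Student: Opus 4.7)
The plan is to reduce all four assertions to direct computations in the quaternion algebra $D$, with bookkeeping in the torsors $N_c$ and $M_\tau$ over the norm-one torus $\bT(\bF_q)$. Two identities do all the work: for $w \in \bF_q[Z]$ we have $Fw = \bar w F$ (equivalently $FwF^{-1} = \bar w$), a restatement of $ZF = -FZ$, and since $\gamma_\xi$ is purely imaginary we have $\bar\gamma_\xi = -\gamma_\xi$. Additionally $ZFZ^{-1} = -F$, and $Z$ commutes with every element of $\bF_q[Z]$.

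I would first dispatch (a) and (c), which are direct. For (a), compute $\bar{\alpha_\xi} = \bar{\gamma_\xi Z} = \bar Z \cdot \bar{\gamma_\xi} = (-Z)(-\gamma_\xi) = Z\gamma_\xi = \gamma_{-\xi} Z = \alpha_{-\xi}$, using $ZFZ^{-1} = -F$ in the penultimate step. Passing to $G(K) = D^\ast/K^\ast$ yields $a_\xi^{-1} = a_{-\xi}$, and $-\xi \in N_c$ because $N(-\xi) = N(\xi)$; the argument for $b_\eta$ is literally the same. For (c), $F\gamma_\xi F^{-1} = -tZ + \bar\xi F = -\gamma_{-\bar\xi}$ and $FZF^{-1} = -Z$; the two signs cancel in $F\alpha_\xi F^{-1} = \gamma_{-\bar\xi} Z = \alpha_{-\bar\xi}$, hence $s a_\xi s = a_{-\bar\xi}$, and verbatim for $b_\eta$.

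For (b), the crucial step is $\delta F \delta^{-1} = F \wp(\delta)^{-1}$, immediate from $\delta F = F\bar\delta$. Using $\bar w = w^{-1}$ for $w \in \bT(\bF_q)$ to commute $\wp(\delta)^{-1}$ past $F$ yields $\delta \gamma_\xi \delta^{-1} = \gamma_{\wp(\delta)\xi}$ and hence $d a_\xi d^{-1} = a_{\wp(\delta)\xi}$. By Lemma~\ref{lem:hilb90} the map $\wp$ is an isomorphism from the cyclic group $\bF_q[Z]^\ast/\bF_q^\ast$ of order $q+1$ onto $\bT(\bF_q)$, so $\wp(\delta)$ generates $\bT(\bF_q)$ and acts freely transitively on the $\bT$-torsor $N_c$. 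The involution $s$ from (c) acts non-trivially, since $\bar\xi = -\xi$ forces $\xi \in \bF_q Z$ and combined with $N(\xi) = -c$ leaves only $\xi = \pm Z$, while $|N_c| = q+1 \geq 4$. Thus $\Delta \to \Sym(A)$ is the faithful dihedral action on a regular $(q+1)$-gon; the argument handles $B$ identically.

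Part (d) is then formal. By (b) no non-trivial rotation fixes $a_\xi$, so the centralizer is at most generated by one reflection. Combining (b) and (c), $sd^r$ commutes with $a_\xi$ iff $a_{-\overline{\wp(\delta)^r \xi}} = a_\xi$, i.e.\ $-\wp(\delta)^{-r}\bar\xi = \xi$, equivalently $\wp(\xi\delta^r) = \wp(\xi)\wp(\delta)^r = -1$. Since $-1 \in \bT(\bF_q)$ and $\wp$ is a bijection of cyclic groups of order $q+1$, there is a unique such $r \in \bZ/(q+1)\bZ$; the argument for $b_\eta$ proceeds identically. The only point to remain vigilant about throughout is the distinction between the torsor $N_c$ (resp.\ $M_\tau$) in which $\xi$ lives and the quotient $\bF_q[Z]^\ast/\bF_q^\ast \simeq \bT(\bF_q)$ on which $\wp$ and $\Delta$ act — but this is bookkeeping rather than a genuine obstacle.
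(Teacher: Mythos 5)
Your proposal is correct and takes essentially the same route as the paper's proof: the conjugation identities $\lambda\gamma_\xi\lambda^{-1}=\gamma_{\xi\wp(\lambda)}$ and $F(\gamma_\xi Z)F^{-1}=\gamma_{-\bar{\xi}}Z$, Hilbert~90 to make $\langle d\rangle$ act simply transitively via the torsor $N_c$ (resp.\ $M_\tau$), and then part (d) by combining the two actions. The only step left tacit is that converting equalities in $A$ back into equalities of parameters --- the freeness on $A$ in (b) and the ``if and only if'' in (d) --- uses the injectivity of $\xi\mapsto a_\xi$, i.e.\ Lemma~\ref{lem:sizeAB}, which the paper cites at exactly those points.
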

\begin{proof}
(1) The sets $N_c$ and $M_\tau$ are closed under the map $w \mapsto - w$. We compute for $\xi \in N_c$ 
\[
\alpha_\xi \alpha_{-\xi} = (tZ + \xi F)Z(tZ - \xi F) Z = (tZ+\xi F)^2 \cdot Z^2 \in K^\ast
\]
and similarly $\beta_\eta \beta_{-\eta} \in K^\ast$ for $\eta \in M_\tau$. This proves assertion (1).

(2) For $\lambda \in \bF_q[Z]^\ast$ and $\xi \in \bF_q[Z]$ we have
\begin{equation} \label{eq:actiondAB}
\lambda(\gamma_\xi Z)\lambda^{-1} = \lambda(tZ + \xi F)Z \lambda^{-1} = (tZ + \xi \wp(\lambda) F)Z = \gamma_{\xi \wp(\lambda)} Z.
\end{equation}
Clearly the map $\xi \mapsto \xi \wp(\lambda)$ identifies $N_c$ (resp.\ $M_\tau$) as a principal homogeous set under $\bF_q[Z]^\ast/\bF_q^\ast$ and so $A$ (resp.\ $B$) forms an orbit under conjugation by the cyclic group $\langle d \rangle = \bF_q[Z]^\ast/\bF_q^\ast$. 
By Lemma~\ref{lem:sizeAB} this orbit has size $q+1$ and thus forms the vertices of a regular $(q+1)$-gon. It remains to show that $s \in \Delta$ acts as a reflection. Hence (3) implies (2).

(3) The sets $N_c$ and $M_\tau$ are closed under the map $w \mapsto - \bar{w}$ so that the claim \eqref{eq:actionbysAB} is well posed. Conjugation by $s$ lifts in $D^\ast$ to 
\[
F(\gamma_\xi Z)F^{-1} = F(tZ + \xi F)ZF^{-1} = (tZ - \bar{\xi}F)Z = \gamma_{-\bar{\xi}}Z
\]
which confirms \eqref{eq:actionbysAB}.

(4) Using (3) and \eqref{eq:actiondAB}  we compute in $G(R)$ that 
\[
sd^r (\gamma_\xi Z) (sd^r)^{-1} = s(\gamma_{\xi \wp(\delta^r)}Z)s = \gamma_{-\bar{\xi}/\wp(\delta^r)} Z 
\]
which agrees by Lemma~\ref{lem:sizeAB} with $\gamma_\xi Z$ in $G(R)$ if and only if 
\[
\xi = - \bar{\xi}/\wp(\delta^r),
\]
or equivalently $\wp(\xi \delta^r) = -1$. 
\end{proof}

\begin{rmk} 
Computations using SAGE with small values of $q$ and $\tau$ yields that the conjugacy classes of reflections stabilizing elements of $A$ or $B$ sometimes coincide and sometimes does not.
\end{rmk}

\begin{defi} \label{defi:lattice}
\begin{enumera}
\item Recall that $\zeta \in \bF_q[Z]$ has norm $(\tau-1)/\tau$. We have natural choice of elements in $N_c$ and $M_\tau$, namely
\[
Z \in N_c
\]
\[
Z/\zeta \in M_\tau
\]
due to $N(Z) = -Z^2 = -c$ and $N(Z/\zeta) = c\tau/(1-\tau)$.
We abbreviate 
\begin{eqnarray*}
\alpha & = & \gamma_{Z} = tZ+ZF, \\
\beta & = & \gamma_{Z/\zeta} = tZ + \zeta^{-1}ZF
\end{eqnarray*}
and furthermore $a$ (resp.\ $b$) for the image of $\alpha$ (resp.\ $\beta$) in $G(K)$.
\item
With the notation from above we define the following finitely generated subgroups of $G(K)$:
\begin{eqnarray*}
\Lambda  \ & = & \langle d,s,a,b \rangle  \ \subseteq \ G(R), \\
\Lambda' & = & \langle d,a,b \rangle \ \subseteq \ G(R'), \\
\Gamma  \ & = & \langle A, B \rangle \ \subseteq \ G(R').
\end{eqnarray*}
If we want to emphasize the dependence on $\tau$ we write $\Lambda = \Lambda_\tau$, $\Lambda'  = \Lambda'_\tau$ and $\Gamma = \Gamma_\tau$.
\end{enumera}
\end{defi}

\begin{rmk}
Clearly there are inclusions $\Gamma \subseteq \Lambda' \subseteq \Lambda$, and we will see later in Section~\S\ref{sec:ggt} that these groups are indeed $S$-arithmetic lattices for the group $G = \PGL_{1,D}$ over $K = \bF_q(t)$.
\end{rmk}

\begin{prop}  \label{prop:gammaoffiniteindexinlambda}
The group $\Gamma$ is a normal subgroup of finite index in $\Lambda$.
\end{prop}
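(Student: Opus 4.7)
The plan is to split the claim into the three statements $\Gamma\subseteq\Lambda$, $\Gamma\triangleleft\Lambda$, and $[\Lambda:\Gamma]<\infty$, each of which should follow almost immediately from the dihedral action of $\Delta=\langle d,s\rangle$ on the generating sets $A$ and $B$ recorded in Proposition~\ref{prop:sactsAB}, together with the observation that the distinguished elements $a=a_Z\in A$ and $b=b_{Z/\zeta}\in B$ lie in $\Gamma$ by construction.

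First I would verify the containment. By Proposition~\ref{prop:sactsAB}(2) the cyclic group $\langle d\rangle$ acts on $A$ through the rotation subgroup of a regular $(q+1)$-gon, so the $\langle d\rangle$-orbit of $a$ has size $q+1$; combined with $\#A=q+1$ from Lemma~\ref{lem:sizeAB} this identifies $A=\{d^iad^{-i}:0\le i\le q\}$, and analogously for $B$. Hence $A\cup B\subset\langle d,a,b\rangle=\Lambda'\subseteq\Lambda$, and consequently $\Gamma=\langle A,B\rangle\subseteq\Lambda'\subseteq\Lambda$.

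Next I would establish normality. The generators $a,b$ of $\Gamma$ obviously normalize $\Gamma$. Parts (2) and (3) of Proposition~\ref{prop:sactsAB} state that $d$ and $s$ permute the sets $A$ and $B$ under conjugation, so they normalize the subgroup $\langle A,B\rangle=\Gamma$ as well. Since $d,s,a,b$ generate $\Lambda$, every element of $\Lambda$ normalizes $\Gamma$, giving $\Gamma\triangleleft\Lambda$.

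Finally, to see finite index I would pass to the quotient $\Lambda/\Gamma$: it is generated by the images of $d,s,a,b$, of which those of $a,b$ are trivial while those of $d,s$ inherit the defining dihedral relations $d^{q+1}=s^2=1$ and $sds=d^{-1}$ recalled in Lemma~\ref{lem:dihedralgroup}. Thus $\Lambda/\Gamma$ is a quotient of $\Delta$, and $[\Lambda:\Gamma]\le |\Delta|=2(q+1)$. No step is genuinely difficult; the only point requiring a moment of care is that the transitive action of $\langle d\rangle$ on $A$ and $B$ in Proposition~\ref{prop:sactsAB}(2) really sweeps out the full sets, which is guaranteed by comparing orbit size with the cardinalities supplied by Lemma~\ref{lem:sizeAB}.
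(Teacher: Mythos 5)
Your overall route---showing $\Lambda=\Delta\cdot\Gamma$ via the conjugation action of $\Delta$ on $A\cup B$ recorded in Proposition~\ref{prop:sactsAB}---is exactly the paper's, but your execution rests on a false identification: $a$ is \emph{not} the element $a_Z\in A$, and $b$ is not $b_{Z/\zeta}\in B$. By Definition~\ref{defi:lattice}, $a$ is the image of $\alpha=\gamma_Z=tZ+ZF$, whereas by Definition~\ref{defi:AB} the element $a_Z$ is the image of $\alpha_Z=\gamma_Z\,Z$; the correct relation is therefore $a_Z=aZ$ (and $b_{Z/\zeta}=bZ$) in $G(K)$. Indeed $a\notin A$: $\alpha$ is purely imaginary, while every $\alpha_\xi=\gamma_\xi Z$ has scalar part $ct\neq 0$, so no $K^\ast$-multiple of an $\alpha_\xi$ equals $\alpha$; moreover $a^2=b^2=1$, so $a,b$ cannot even lie in the torsion free group $\Gamma$. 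This falsifies three of your intermediate claims as written: $A$ is the $\langle d\rangle$-conjugacy orbit of $a_Z$, not of $a$; $a,b$ are not generators (nor elements) of $\Gamma$, so ``they obviously normalize $\Gamma$'' is unfounded as stated; and the images of $a,b$ in $\Lambda/\Gamma$ are not trivial.

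The repair is a one-liner and brings you back to the paper's argument: since $Z^2=c\in\bF_q^\ast$, the image of $Z$ in $\bF_q[Z]^\ast/\bF_q^\ast=\langle d\rangle$ is the involution $d^{(q+1)/2}$, hence $a_Z=a\,d^{(q+1)/2}$ and $b_{Z/\zeta}=b\,d^{(q+1)/2}$. Then $A$ and $B$ are the $\langle d\rangle$-conjugacy orbits of $a_Z\in\langle d,a\rangle$ and $b_{Z/\zeta}\in\langle d,b\rangle$ respectively (Proposition~\ref{prop:sactsAB}(2) plus Lemma~\ref{lem:sizeAB}, as you intended), so $\Gamma=\langle A,B\rangle\subseteq\Lambda'\subseteq\Lambda$ and $\Lambda=\Delta\cdot\Gamma$; the elements $d,s$ normalize $\Gamma$ by Proposition~\ref{prop:sactsAB}(2),(3), and $a,b$ normalize $\Gamma$ because they are products of elements of $\Gamma$ with powers of $d$; finally $\Lambda/\Gamma$ is generated by the images of $d$ and $s$ (the images of $a$ and $b$ coincide with that of $d^{(q+1)/2}$), hence is a quotient of the dihedral group $\Delta$, giving $[\Lambda:\Gamma]\leq 2(q+1)$. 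With these corrections your proof coincides with the one in the paper.
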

\begin{proof}
Since $\Lambda$ is generated by 
$d$, $s$, $a_{Z} = aZ$ and $b_{Z/\zeta}=b Z$, we have $\Lambda = \Delta \cdot \Gamma$ and 
it suffices to study the action of $d$ and $s$ by conjugation on $A$ and $B$. The assertion then follows from 
Proposition~\ref{prop:sactsAB}.
\end{proof}

\subsection{Relations of length $4$ in the lattice} \label{sec:rel4}
In this section we establish that  $A,B \subset \Gamma$ determines a VH-structure in $\Gamma$ in the sense of 
Definition~\ref{defi:BMstructureingroup}. 
This result is reminiscent of Dickson's theory of prime factorization in integral Hamiltonian quaternions, 
see \cite{dickson:quaternions} Theorem 8. 

\begin{prop} \label{prop:establishVHstructure}
With the notation from above the following holds.
\begin{enumera}
\item \label{propitem:ABequalsBA} The sets of products $AB = B A$ agree and have cardinality $(q+1)^2$. 
\item \label{propitem:relation4} The equation
\[
a_\xi b_{\eta} = b_{\lambda} a_{\mu}
\]
has for  every $(\xi,\eta)  \in N_c \times M_\tau$ a unique solution $(\mu,\lambda) \in N_c \times M_\tau$ and conversely for  every $(\mu,\lambda) \in  N_c \times M_\tau$ a unique solution $(\xi,\eta)  \in  N_c \times M_\tau$.
\item \label{propitem:no2-torsion}  None of the elements $a_\xi b_\eta$ and  $b_\eta a_\xi$  for all $\xi \in N_c$ and $\eta \in M_\tau$ is $2$-torsion. 
\item \label{propitem:BMstructure} The sets $A,B \subset \Gamma$ form a VH-structure in $\Gamma$.
\end{enumera}
\end{prop}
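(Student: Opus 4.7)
The strategy is a direct quaternion computation. Using only $Z^2 = c$ and $ZF = -FZ$, a short manipulation yields the sandwich identity $Z\gamma_\eta Z = c(tZ - \eta F)$, from which one obtains the normal form
\begin{equation*}
\alpha_\xi\beta_\eta \;=\; ct\bigl[(tc - \xi\bar\eta\,(t-1)) \,-\, (\xi + \eta)\,ZF\bigr],
\end{equation*}
and an analogous expression for $\beta_\lambda\alpha_\mu$ with $\xi\bar\eta$, $\xi+\eta$ replaced by $\lambda\bar\mu$, $\lambda+\mu$; here $\bar\eta = F\eta F^{-1}$ is the Frobenius conjugate of \eqref{eq:frobenius}. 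Reading an equation $\alpha_\xi\beta_\eta = f(t)\,\beta_\lambda\alpha_\mu$ in the $K$-basis $\{1,Z,F,ZF\}$ of $D$, comparison of the $F$- and $ZF$-coefficients (each of the shape $(\text{element of }\bF_q)\cdot t$) forces $f(t) \in \bF_q^\ast$, and then comparison of the $t^2$- and $t$-contributions to the $1$-coefficient forces $f = 1$. Consequently $a_\xi b_\eta = b_\lambda a_\mu$ in $G(K)$ is equivalent to the pair of identities
\begin{equation*}
\xi + \eta = \lambda + \mu, \qquad \xi\bar\eta = \lambda\bar\mu
\end{equation*}
in $\bF_q[Z]$.

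Writing $u = \xi + \eta$ and $v = \xi\bar\eta$ and substituting $\mu = u - \lambda$ into the second equation yields $\lambda\bar u = v + N(\lambda)$. The case $u = 0$ is ruled out because it would force $N(\xi) = N(\eta)$, contradicting $-c \neq c\tau/(1-\tau)$. Prescribing $N(\lambda) = c\tau/(1-\tau)$ therefore determines $\lambda = (v + c\tau/(1-\tau))/\bar u$ uniquely, and a direct norm calculation based on $N(v) = -c^2\tau/(1-\tau)$ and $N(u) = v + \bar v + c(2\tau - 1)/(1-\tau)$ confirms that this $\lambda$ really lies in $M_\tau$ and that $\mu = u - \lambda$ lies in $N_c$. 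This proves \ref{propitem:relation4}. Applying the same uniqueness to $a_\xi b_\eta = a_{\xi'}b_{\eta'}$ gives injectivity of $(\xi,\eta) \mapsto a_\xi b_\eta$, so $|AB| = (q+1)^2$; running the argument with the roles of $A$ and $B$ swapped produces the inverse map and gives $AB = BA$, establishing \ref{propitem:ABequalsBA}.

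For \ref{propitem:no2-torsion}, an element of $G(K) = D^\ast/K^\ast$ has order dividing $2$ precisely when some lift to $D^\ast$ is either scalar or has vanishing reduced trace. The $ZF$-coefficient $-(\xi+\eta)\,ct$ in the normal form is non-zero, ruling out scalars, and the reduced trace evaluates to $ct\bigl[2tc - (v+\bar v)(t - 1)\bigr]$, a polynomial of degree $2$ in $t$ whose vanishing would entail $2c = 0$, contradicting $p$ odd. The case of $\beta_\eta\alpha_\xi$ is identical, since swapping the factors replaces $v$ by $\bar v$ and leaves $v + \bar v$ unchanged. Finally, \ref{propitem:BMstructure} combines the preceding with Proposition~\ref{prop:sactsAB}(1) and the observation $a_\xi \neq a_{-\xi}$ (forced by $\xi \neq 0$) for axiom \ref{defiitem:AandBinvolution}, the definition $\Gamma = \langle A, B\rangle$ for the generation axiom, and \ref{propitem:ABequalsBA}, \ref{propitem:no2-torsion} for axioms \ref{defiitem:ABequalsBA}, \ref{defiitem:AB2torsion} of Definition~\ref{defi:BMstructureingroup}. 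The main technical point is the norm-preservation step in the second paragraph, which boils down to the single identity $N\bigl(v + c\tau/(1-\tau)\bigr) = c\tau/(1-\tau)\cdot N(u)$; everything else is algebraic bookkeeping.
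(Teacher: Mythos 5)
Your proof is correct and follows essentially the same route as the paper: the identical normal-form computation of $\alpha_\xi\beta_\eta$, reduction to the system $\xi+\eta=\lambda+\mu$, $\xi\bar\eta=\lambda\bar\mu$, and a direct count/uniqueness argument. The only differences are cosmetic bookkeeping — you force $f=1$ by comparing coefficients instead of evaluating at $t=1$, solve the system via $\lambda\bar u = v + N(\lambda)$ and the norm identity $N\bigl(v + \tfrac{c\tau}{1-\tau}\bigr) = \tfrac{c\tau}{1-\tau}N(u)$ rather than the paper's closed form $\lambda=\bar\eta\,\wp(\xi+\eta)$, $\mu=\bar\xi\,\wp(\xi+\eta)$ (the two agree), and exclude $2$-torsion by the scalar/trace criterion, which is equivalent to the paper's $w_0\neq 0$, $\vec w\neq 0$ argument.
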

\begin{proof}
Assertions \ref{propitem:ABequalsBA} follows directly from  assertion \ref{propitem:relation4}.
In order to prove \ref{propitem:relation4} we compute in $D$ rather than in $D^\ast/K^\ast$, namely
\begin{eqnarray}
\alpha_\xi \beta_\eta & = & (tZ + \xi F)Z(tZ + \eta F)Z =  c (tZ + \xi F)(tZ - \eta F)  \label{eq:axibeta} \\
& = & c^2t^2 - c \xi \eta^q t(t-1) - c (\xi + \eta) t ZF   \notag \\
\beta_\lambda \alpha_\mu & = & (tZ + \lambda F)Z(tZ + \mu F)Z =  c (tZ + \lambda F)(tZ - \mu F) \notag \\
& = & c^2t^2 - c \lambda \mu^q t(t-1) - c (\lambda + \mu) t ZF    \notag
\end{eqnarray}
Since $D = \bF_q[Z](t) \oplus \bF_q[Z](t) \cdot ZF$ the equation in $D^\ast$ 
\[
\alpha_\xi \beta_\eta  =  f(t) \cdot \beta_\lambda \alpha_\mu
\]
with $f(t) \in K^\ast$ can only hold (consider the $ZF$-component) with a constant 
\[
f = f(t) \in \bF_q[Z] \cap \bF_q(t) = \bF_q.
\]
Evaluating at $t= 1$ yields
\[
c^2  - c (\xi + \eta) ZF  = f \cdot \big(c^2 - c (\lambda + \mu) ZF \big)
\]
so that $f = 1$ is forced. Now comparison of coefficients yields the system of equations
\begin{eqnarray} \label{eq:comparecoefficientsABequalsBA}
\xi \eta^q& = & \lambda \mu^q, \\
\xi + \eta & = &  \lambda + \mu. \notag
\end{eqnarray}
Note that the sums in the second equation are non-zero since otherwise $0 = N(-\xi/\eta) = \frac{\tau-1}{\tau}$ contradicting $\tau \not= 0,1$.
Because of $\mu^{q+1} = c = \xi^{q+1}$, the first equation is equivalent to $(\eta/\xi)^q = \lambda/\mu$. So the second equation reads
\[
\mu (1+ \bar \eta/ \bar \xi) = \mu(1+\lambda/\mu) = \mu + \lambda = \xi + \eta
\]
and finally \eqref{eq:comparecoefficientsABequalsBA} is equivalent to 
\begin{eqnarray} \label{eq:lambdamu_from_xieta}
\lambda & = &   \bar{\eta} \cdot \wp(\xi+\eta) , \\ 
\mu & = &  \bar{\xi} \cdot \wp(\xi+ \eta) . \notag 
\end{eqnarray}
Similarly, \eqref{eq:comparecoefficientsABequalsBA} is equivalent to 
\begin{eqnarray} \label{eq:xietau_from_lambdam}
\xi & = &   \bar{\mu} \cdot \wp(\lambda + \mu) , \\
\eta & = &  \bar{\lambda}  \cdot  \wp(\lambda + \mu) . \notag 
\end{eqnarray}
Note that the constraints on the norms of $\lambda$, $\mu$ and of $\xi$ and $\eta$ are satisfied automatically. This completes the proof of \ref{propitem:relation4}. 

Assertion \ref{propitem:BMstructure} follows by definition from \ref{propitem:ABequalsBA} and  \ref{propitem:no2-torsion}.
In order to show \ref{propitem:no2-torsion} we write 
\[
\alpha_\xi \beta_\eta = w_0 + w_1 Z + w_2 F + w_3 ZF = w_0 + \vec{w}
\]
and read from \eqref{eq:axibeta} that the imaginary part $\vec{w} \not= 0$ does not vanish and, by evaluating at $t=1$, 
\[
2 w_0|_{t = 1} = \trd(\alpha_\xi \beta_\eta|_{t=1}) = \trd(c^2 - c(\xi+\eta)ZF) = 2c^2 \not= 0
\]
that $w_0 \not= 0$. We therefore compute the square as
\[
(\alpha_\xi \beta_\eta)^2 = (w_0 + \vec{w})^2 = w_0^2 - \Nrd(\vec{w}) + 2w_0 \vec{w} \notin K,
\]
so that $(a_\xi b_\eta)^2 \not= 1$.
The claim for $b_\eta a_\xi$ follows from $AB = BA$. This completes the proof.
\end{proof}

\begin{cor} \label{cor:localpermutationwork}
For  $(\xi, \lambda) \in N_c \times M_\tau$ and with the notation of Section~\S\ref{sec:fixednorm} the relation
\[
a_\xi b_{\sigma_\xi(\lambda)}  = b_\lambda a_{\sigma_\lambda(\xi)}
\]
holds in $\Gamma$.
\end{cor}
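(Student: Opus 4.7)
The plan is to directly apply the existence and uniqueness part of Proposition~\ref{prop:establishVHstructure}\ref{propitem:relation4} and verify that the bijection $(\xi,\eta) \leftrightarrow (\mu,\lambda')$ described there, when re-parametrized, takes the shape asserted in the corollary. More precisely, given $(\xi,\lambda) \in N_c \times M_\tau$, set $\eta = \sigma_\xi(\lambda) = \bar{\lambda}\cdot\wp(\lambda-\xi) \in M_\tau$ and apply the proposition to $(\xi,\eta)$. This yields a unique $(\mu,\lambda')$ with $a_\xi b_\eta = b_{\lambda'} a_\mu$, given by the formulas \eqref{eq:lambdamu_from_xieta}:
\[
\lambda' = \bar{\eta}\cdot\wp(\xi+\eta), \qquad \mu = \bar{\xi}\cdot\wp(\xi+\eta).
\]
It then suffices to show that $\lambda' = \lambda$ and $\mu = \sigma_\lambda(\xi)$.

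The heart of the argument is a single computation that simplifies $\wp(\xi+\eta)$. Substituting $\eta = \bar{\lambda}(\lambda-\xi)/(\bar{\lambda}-\bar{\xi})$ and clearing denominators,
\[
\xi + \eta \;=\; \frac{\xi(\bar{\lambda}-\bar{\xi}) + \bar{\lambda}(\lambda - \xi)}{\bar{\lambda}-\bar{\xi}} \;=\; \frac{N(\lambda)-N(\xi)}{\bar{\lambda}-\bar{\xi}}.
\]
Now the two norm conditions $N(\xi) = -c$ and $N(\lambda) = c\tau/(1-\tau)$ combine to give $N(\lambda)-N(\xi) = c/(1-\tau) \in \bF_q^\ast$. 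Since the numerator is Galois-fixed, one reads off
\[
\wp(\xi+\eta) \;=\; \frac{\lambda-\xi}{\bar{\lambda}-\bar{\xi}} \;=\; \wp(\lambda - \xi).
\]

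From here the two desired identities are immediate. For the first, $\bar{\eta} = \lambda(\bar{\lambda}-\bar{\xi})/(\lambda-\xi) = \lambda/\wp(\lambda-\xi)$, so
\[
\lambda' = \bar{\eta}\cdot\wp(\xi+\eta) = \frac{\lambda}{\wp(\lambda-\xi)} \cdot \wp(\lambda - \xi) = \lambda.
\]
For the second, using $\wp(-w) = \wp(w)$ and Lemma~\ref{lem:sigma_xi},
\[
\mu = \bar{\xi}\cdot\wp(\xi+\eta) = \bar{\xi}\cdot\wp(\lambda-\xi) = \bar{\xi}\cdot\wp(\xi-\lambda) = \sigma_\lambda(\xi).
\]

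There is no real obstacle here beyond unwinding the definitions; the only content is the observation that $N(\lambda)-N(\xi) \in \bF_q$ thanks to the two fixed-norm conditions, which makes $\wp(\xi+\eta)$ collapse to $\wp(\lambda-\xi)$ and thus converts the asymmetric formulas \eqref{eq:lambdamu_from_xieta} into the symmetric form involving $\sigma_\xi$ and $\sigma_\lambda$.
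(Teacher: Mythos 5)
Your argument is correct: the simplification $\xi+\eta=\bigl(N(\lambda)-N(\xi)\bigr)/(\bar\lambda-\bar\xi)$ with $N(\lambda)-N(\xi)=c/(1-\tau)\in\bF_q^\ast$ does give $\wp(\xi+\eta)=\wp(\lambda-\xi)$, and then $\lambda'=\lambda$, $\mu=\bar\xi\,\wp(\xi-\lambda)=\sigma_\lambda(\xi)$ as you claim, so the relation $a_\xi b_{\sigma_\xi(\lambda)}=b_\lambda a_{\sigma_\lambda(\xi)}$ follows.

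The route differs from the paper's only mildly, and both rest on the computation inside the proof of Proposition~\ref{prop:establishVHstructure}: the paper takes $\eta=\sigma_\xi(\lambda)$, $\mu=\sigma_\lambda(\xi)$ and verifies the raw system \eqref{eq:comparecoefficientsABequalsBA}, namely $\xi\eta^q=\lambda\mu^q$ and $\xi+\eta=\lambda+\mu$, by two formal identities in $\bF_q[Z]$ that never use the norm constraints; you instead invoke the uniqueness statement of Proposition~\ref{prop:establishVHstructure}~\ref{propitem:relation4} together with the solved formulas \eqref{eq:lambdamu_from_xieta} and show they return $(\lambda,\sigma_\lambda(\xi))$, which requires the extra (easy) observation that $N(\lambda)-N(\xi)$ is a nonzero scalar so that $\wp(\xi+\eta)$ collapses. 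Your version makes transparent why the asymmetric formulas \eqref{eq:lambdamu_from_xieta} become the symmetric $\sigma$-form, at the cost of quoting equations from the proposition's proof rather than its statement (exactly as the paper's own proof of the corollary also does with \eqref{eq:comparecoefficientsABequalsBA}); one should also note, as you implicitly do, that $\sigma_\xi(\lambda)\in M_\tau$ by Lemma~\ref{lem:sigma_xi} and that $\lambda\neq\xi$ since the two norms differ, so all expressions are defined. No gap.
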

\begin{proof}
It suffices to check the equations \eqref{eq:comparecoefficientsABequalsBA}. 
The first equation holds due to
\[
\xi \eta^q = \xi \bar{\eta} = \xi \cdot \lambda \frac{\bar \lambda - \bar \xi}{\lambda - \xi} 
= \lambda \cdot \xi \frac{\bar \xi - \bar \lambda}{\xi - \lambda} = \lambda \bar{\mu} = \lambda \mu^q,
\]
and 
\[
\xi + \eta = \xi + \bar{\lambda} \cdot \frac{\lambda - \xi}{\bar \lambda - \bar \xi} = \lambda + (\xi - \lambda)\big( 1 +  \frac{\bar \lambda}{\bar \xi - \bar \lambda}\big)
= \lambda + \bar{\xi} \cdot \frac{\xi-\lambda}{\bar \xi - \bar \lambda}  = \lambda + \mu.
\]
verifies the second equation.
\end{proof}

\section{Geometry of the quaternionic lattice} \label{sec:ggt}
In this section we keep the notation from Section~\S\ref{sec:quatlat} and study the group $\Gamma$ via geometric group theory unfolding in its action on products of trees.

\subsection{Reminder on the Bruhat Tits action}
Let  $K_x$ denote the completion of $K$ at $t = x$ with valuation $\ord_x$ and uniformizer $\pi_x$. We denote the ring of integers $\{f \ ; \ \ord_x(f) \geq 0\}$ in the local field $K_x$ by $\fo_x$. Let us recall that the vertices in the Bruhat Tits tree $T_x$ for $\PGL_2(K_x)$ are homothety classes of $\fo_x$-lattices in $(K_x)^2$. The tree $T_x$ has constant valency equal to the norm  $N(x)$ which equals the cardinality of the residue field $\kappa(x)$ at $t= x$. The group $\PGL_2(K_x)$ acts on $T_x$ simplicially by representatives in $\GL_2(K_x)$.
\[
\PGL_2(K_x) \to \Aut(T_x)
\]
We denote the vertex corresponding to the standard lattice $(\fo_x)^2$ with both coordinates in $\fo_x$ by $v_x \in T_x$ and call it the \textbf{standard vertex}. The stabilizer of $v_x$ is the group 
\[
\PGL_2(\fo_x)
\]
of elements that can be lifted to $\GL_2(\fo_x)$. 

\begin{lem} \label{lem:BTneighbours}
There are canonical bijections of the following three sets:
\begin{enumeral}
\item
The set of neighbours of $v_x \in T_x$.
\item $\{ M \in \GL_2(K_x) \cap \rM_2(\fo_x) \ ; \ \ord_x(\det(M)) = 1\}/\GL_2(\fo_x)$.
\item $\bP^1(\kappa(x))$.
\end{enumeral}
\end{lem}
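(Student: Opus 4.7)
The plan is to establish two bijections directly, (i) $\leftrightarrow$ (ii) and (i) $\leftrightarrow$ (iii); composing them gives the third. I first recall that in the Bruhat--Tits tree $T_x$, two homothety classes of $\fo_x$-lattices in $K_x^2$ are adjacent if and only if they admit representatives $L' \subsetneq L$ with cyclic quotient $L/L' \simeq \kappa(x)$.

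For the bijection (i) $\leftrightarrow$ (ii), I would observe that within the homothety class of a neighbour $w$ of $v_x = [\fo_x^2]$ there is a \emph{unique} representative $L \subsetneq \fo_x^2$ satisfying $\fo_x^2/L \simeq \kappa(x)$: homothety acts by powers of $\pi_x$, and only one such power lands in $\fo_x^2$ with the required index. Writing $L$ as the $\fo_x$-span of the columns of a matrix $M$, we have $M \in \rM_2(\fo_x) \cap \GL_2(K_x)$, and the index computation $[\fo_x^2 : L] = \#\kappa(x)$ translates into $\ord_x(\det M) = 1$. Changing the chosen basis of $L$ amounts to right multiplication of $M$ by an element of $\GL_2(\fo_x)$, so $w \mapsto [M]$ is well defined; conversely, any such $M$ yields a sublattice $L$ of the required type, hence a neighbour.

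For the bijection (i) $\leftrightarrow$ (iii), I would reduce modulo $\pi_x$. Given $w$ with canonical representative $L \subsetneq \fo_x^2$ as above, the quotient $\fo_x^2/L$ is annihilated by $\pi_x$, so $\pi_x \fo_x^2 \subseteq L$. The image of $L$ in $\fo_x^2/\pi_x\fo_x^2 = \kappa(x)^2$ is therefore a one-dimensional $\kappa(x)$-subspace, i.e.\ a point of $\bP^1(\kappa(x))$. Conversely, the preimage of any line $\ell \subset \kappa(x)^2$ under $\fo_x^2 \surj \kappa(x)^2$ is an $\fo_x$-sublattice $L$ with $\fo_x^2/L \simeq \kappa(x)$, giving a neighbour of $v_x$. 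These two constructions are visibly mutually inverse.

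Main obstacle: there is essentially none. The lemma is a textbook unpacking of the structure of the Bruhat--Tits tree for $\PGL_2$ over a local field (as in Serre, \emph{Trees}, Ch.~II), and the only subtle step is fixing a canonical representative within each homothety class of neighbours, which is forced automatically by the index condition $[\fo_x^2 : L] = \#\kappa(x)$.
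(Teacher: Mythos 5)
Your proof is correct and follows essentially the same route as the paper: a neighbour of $v_x$ is uniquely represented by a lattice $L$ with $\pi_x\fo_x^2 \subsetneq L \subsetneq \fo_x^2$ (your index-$\#\kappa(x)$ normalization is the same condition), reduction modulo $\pi_x$ gives the bijection with $\bP^1(\kappa(x))$, and $M \mapsto L = M(\fo_x^2)$ with the cokernel-length/determinant computation gives the bijection with the coset space in (ii). The paper's proof is just a terser version of exactly this argument.
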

\begin{proof}
A neighbour of $v_x$ can be uniquely represented by a lattice $L$ with $\pi (\fo_x)^2 \subsetneq L \subsetneq (\fo_x)^2$, hence corresponds uniquely by reduction modulo $\pi_x$ to a line in $\kappa(x)^2$, in fact a point in $\bP^1(\kappa(x))$.

The bijection from (b) to (a) is obtained by mapping $M$ to $L = M((\fo_x)^2)$ which is well defined since $M$ as an endomorphism of $(\fo_x)^2$ has $\coker(M)$ with $\fo_x$-length equal to $\ord_x(\det(M))$.
\end{proof}

By definition $G(R)$ is an $S$-arithmetic lattice for the set of places $S = \{0,1,\tau,\infty\}$:
\[
G(R) \inj G(K_{0}) \times G(K_{1}) \times G(K_{\tau}) \times G(K_\infty).
\]
Hence $G(R)$ is discrete and cocompact  by Behr and Harder, see \cite{margulis:book} I Theorem 3.2.4, because 
$G = \PGL_{1,D}$  is semisimple and anisotropic over $K$.
Since $D$ ramifies at $t = 0,1$, the local factors 
$G(K_0) \text{ and } G(K_{1})$ are compact, and 
\begin{equation} \label{eq:arithmeticlattice2factors}
G(R) \inj G(K_{\tau}) \times G(K_\infty)
\end{equation}
is still discrete and cocompact.

At $t=\tau$ and $t=\infty$ the quaternion algebra splits.
The field extension $\bF_q(z)/K$ used in Lemma~\ref{lem:split_z} maps the place $z= 0$ to $t = \infty$, so 
\[
K_{\infty} \simeq \bF_q((z))
\]
and the splitting of Lemma~\ref{lem:split_z} yields an isomorphism 
\[
G(K_\infty) \simeq  \PGL_2\big(\bF_q((z))\big)
\]
Similarly, the field extension $\bF_q(y)/K$ used in Lemma~\ref{lem:split_y} maps the place $y = 0$ to $t = \tau$, so 
\[
K_{\tau} \simeq \bF_q((y))
\]
and the splitting of Lemma~\ref{lem:split_y} yields an isomorphism 
\[
G(K_{\tau}) \simeq \PGL_2\big(\bF_q((y))\big) 
\]

The Bruhat Tits building of the two split factors $G(K_{\tau})$ and $G(K_\infty)$ is a tree $T_{\tau}$ resp.\ $T_\infty$ with constant valency $q+1$. 
It follows that the  diagonal embedding \eqref{eq:arithmeticlattice2factors} defines a simplicial action
\[
\rho :  G(R) \to \Aut(T_{\tau}) \times \Aut(T_{\infty}).
\]
that we call the Bruhat Tits action of $G(R)$ or its subgroups $\Gamma$, $\Lambda'$ and $\Lambda$.

\subsection{Vertex stabilizers}
For a group $H$ acting on a square complex such as a product of trees we denote the stabilizer of the vertex $v$ by $H_v$. 

\begin{prop} \label{prop:stab_v}
The following holds for the standard vertex $v_0 = (v_{\tau},v_\infty) \in T_{\tau} \times T_\infty$.
\begin{enumera}
\item
The stabilizer $G(R)_{v_0}$
agrees with the dihedral group $\Delta  = \langle d,s \rangle \subset G(R)$ of Lemma~\ref{lem:dihedralgroup}.
\item The stabilizer $G(R)_{v_0}$ acts on the set of neighbours of $v_{\tau} \in T_{\tau}$ (resp.\ of $v_\infty \in T_\infty$) 
as the normalizer of a non-split torus in $\PGL_2(\bF_q)$ with its natural action on $\bP^1(\bF_q)$ under the identification of 
Lemma~\ref{lem:BTneighbours}.
\item \label{propitem:transitiveonneighbours} The action of $\langle d \rangle$ is free and transitive on the set of neighbours of $v_{\tau}$ (resp.\ of $v_\infty$).
\end{enumera}
\end{prop}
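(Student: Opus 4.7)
The approach is to handle (a), (b), (c) through direct computation in the two local splittings $\rho_y$ and $\rho_z$ (Lemmas~\ref{lem:split_y},~\ref{lem:split_z}), establishing first the inclusion $\Delta \subseteq G(R)_{v_0}$ and the residue action on the links at $v_\tau$ and $v_\infty$, and finally closing the reverse inclusion in (a) by a reduced-norm analysis.

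For $\Delta \subseteq G(R)_{v_0}$: the generator $d = [\delta]$ with $\delta = \delta_0 + \delta_1 Z$ is represented under both splittings by the $\bF_q$-matrix $\matzz{\delta_0}{c\delta_1}{\delta_1}{\delta_0}$ of invertible determinant $N(\delta) \in \bF_q^\ast$, hence fixes both standard vertices. For $s = [F]$, one factors out the scalar $t$ from $\rho_y(F)$ (a unit at $v_\tau$, since $t|_{y=0} = \tau$) and from $\rho_z(F) = t \cdot \matzz{z-1}{0}{0}{1-z}$, leaving matrices whose specializations at $y=0$ and $z=0$ are $\matzz{-\zeta_0}{c\zeta_1}{-\zeta_1}{\zeta_0}$ and $\matzz{-1}{0}{0}{1}$ respectively, with non-zero determinants $-N(\zeta) = (1-\tau)/\tau$ and $-1$. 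So $s$ also fixes both standard vertices.

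For (b) and (c): by Lemma~\ref{lem:BTneighbours}, the $q+1$ neighbours of $v_\tau$ identify with $\bP^1(\bF_q)$, and the stabilizer action factors through the residue map $\PGL_2(\fo_\tau) \surj \PGL_2(\bF_q)$. The residue of $\delta$ has characteristic polynomial $X^2 - 2\delta_0 X + N(\delta)$ of discriminant $4c\delta_1^2$; since $c$ is a non-square and $\delta \notin \bF_q$, its eigenvalues lie in $\bF_{q^2} \setminus \bF_q$, so $\langle d \rangle$ maps to a non-split maximal torus of order $q+1$ that acts on $\bP^1(\bF_q)$ without rational fixed points, hence simply transitively. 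The residue of $s$ is a trace-zero involution that inverts this torus by~\eqref{eq:frobenius}, so the image of $\Delta$ is the normalizer of a non-split maximal torus. This yields (b) and (c) at $v_\tau$; the argument at $v_\infty$ via $\rho_z$ is entirely parallel.

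The main obstacle is the reverse inclusion $G(R)_{v_0} \subseteq \Delta$. The stabilizer is finite, being the intersection of the discrete group $G(R)$ with the compact open subgroup $\PGL_2(\fo_\tau) \times \PGL_2(\fo_\infty)$, and it contains $\Delta$ of order $2(q+1)$. For $x \in \fO^\ast$ representing a class in $G(R)_{v_0}$, the parity conditions $\ord_\tau(\Nrd(x)), \ord_\infty(\Nrd(x)) \equiv 0 \pmod 2$ allow a rescaling by $R^\ast$ forcing $\Nrd(x)$ into one of the finitely many classes in $\bF_q^\ast \cdot \{1, t(t-1)\}$ modulo squares in $R^\ast$. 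Expanding $x = f_0 + f_1 Z + f_2 F + f_3 ZF$ and using the explicit form $\Nrd(x) = (f_0^2 - c f_1^2) - t(t-1)(f_2^2 - c f_3^2)$, a polynomial degree comparison after clearing denominators (exploiting that $c$ is a non-square so $g^2 - c h^2 = 0$ in $\bF_q[t]$ forces $g = h = 0$) reduces $x$ to $\bF_q[Z]^\ast$ (when $\Nrd(x) \in \bF_q^\ast$) or to $(f_2 + f_3 Z) F$ with $(f_2,f_3) \in \bF_q^2$ (when $\Nrd(x) \in \bF_q^\ast \cdot t(t-1)$); either way $[x] \in \Delta$.
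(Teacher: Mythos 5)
Your verification that $\Delta\subseteq G(R)_{v_0}$ and your analysis of the residue action (parts (b),(c)) are correct and essentially the paper's computation. The gap is in the reverse inclusion $G(R)_{v_0}\subseteq\Delta$: the only way you use the hypothesis that $[x]$ fixes \emph{both} standard vertices is through the parity of $\ord_\tau(\Nrd x)$ and $\ord_\infty(\Nrd x)$, and that is far too weak. Your "degree comparison" claims that $\Nrd(x)=(f_0^2-cf_1^2)-t(t-1)(f_2^2-cf_3^2)\in\bF_q^\ast$ forces $f_2=f_3=0$ and $f_0,f_1\in\bF_q$. Anisotropy of $g^2-ch^2$ only prevents cancellation \emph{inside} each binary piece; it does not prevent the two pieces from cancelling against each other, since their leading coefficients are norms from $\bF_q[Z]$ and the norm is surjective onto $\bF_q^\ast$. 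Concretely, for $\xi,\mu\in N_c$ with $\mu\neq-\xi$, the element $x=\alpha_\xi\alpha_\mu/t = c^2t-c\xi\mu^q(t-1)-c(\xi+\mu)ZF$ lies in $\fO^\ast$ and has $\Nrd(x)=c^4\in\bF_q^\ast$ (so it passes every condition you impose), yet it has a nonzero $ZF$-component together with a nonzero, generically non-constant $\bF_q[Z]$-component; hence no rescaling puts it in $\bF_q[Z]^\ast$ or in $\bF_q[Z]\cdot F$, and its class $a_\xi a_\mu$ is not in $\Delta$. (This is not a counterexample to the Proposition --- $a_\xi a_\mu$ fixes $v_\tau$ but moves $v_\infty$ --- but it kills the implication your final step rests on.) In short: constancy of the reduced norm does not characterize the stabilizer, so the elementary reduction cannot close part (a), and with it your part (b), which also needs the exact equality $G(R)_{v_0}=\Delta$.

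For comparison, the paper's route after establishing $\Delta\subseteq G(R)_{v_0}$ is genuinely group-theoretic: the stabilizer is finite (discreteness against the compact group $\PGL_2(\bF_q[[y]])\times\PGL_2(\bF_q[[z]])$, as you note), and sits inside $\PGL_2(\bF_q[[z]])$; the intersection with the pro-$p$ congruence kernel is a normal $p$-subgroup which Dickson's classification of finite subgroups of $\PGL_2$ forces to be trivial (the presence of the dihedral group $\Delta$ of order $2(q+1)$ rules out the $p$-semi-elementary and $A_5$ cases, and $\PSL_2(\bF)$, $\PGL_2(\bF)$ have no nontrivial normal $p$-subgroups). Thus $\Delta\subseteq G(R)_{v_0}\subseteq\PGL_2(\bF_q)$, where $\Delta$ is a maximal subgroup (Giudici), so it only remains to exclude $G(R)_{v_0}=\PGL_2(\bF_q)$; this is done by a reduced-norm argument: $\Nrd$ embeds $\Delta^{\ab}/2$ (of order $4$, generated by the classes of $\mgen$ and $F$) into $R^\ast/(R^\ast)^2$, which is impossible through $\PGL_2(\bF_q)^{\ab}/2$ of order $2$. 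To repair your proof you would need either this kind of argument or some other genuine use of integrality at both places beyond the valuation of the reduced norm.
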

\begin{proof}
For ease of legibility we write $v = v_0$. We must determine representatives for $d$ and $s$ in $\rM_2(\fo_x)$ for $x$ the places $t = c-1$ and $t = \infty$. 
Under both $\rho_y$ and $\rho_z$ we have 
\begin{equation} \label{eq:actionofZ}
Z \mapsto \matzz{0}{c}{1}{0}
\end{equation}
so that $\bF_q[Z]^\ast$ maps isomorphically onto a non-split torus in the constant subgroup
\[
\GL_2(\bF_q) \subset \GL_2(K_{\tau})  \quad \text{ (resp.\ } \GL_2(K_{\infty})  \text{ ) }.
\]
In view of Lemma~\ref{lem:BTneighbours} this shows  $d \in G(R)_v$, and also that (2) and (3) are consequences of (1). 

\smallskip

Concerning $s$, or more precisely its lift $F$, the formula in Lemma~\ref{lem:split_z} shows
\[
z \cdot \rho_z(F)  =   z \cdot \matzz{t(z-1)}{0}{0}{t(1-z)} =  \matzz{-1/2}{0}{0}{1/2} + \dO(z) \in \GL_2\big(\bF_q[[z]]\big)
\]
and the formula of Lemma~\ref{lem:split_y} shows
\[
\rho_y(F) = t(y-1) \cdot \matzz{\zeta_0}{-c\zeta_1}{\zeta_1}{-\zeta_0} = \tau \cdot \matzz{-\zeta_0}{c\zeta_1}{-\zeta_1}{\zeta_0}
+ \dO(y) \in \GL_2\big(\bF_q[[y]]\big).
\]
This proves $s \in G(R)_v$ and so $\Delta = \langle d,s \rangle \subseteq G(R)_v$.

\smallskip

For the converse inclusion $G(R)_v \subseteq \Delta$ we argue as follows. Since 
\[
(\rho_{y},\rho_z)(G(R)) \subset  \PGL_2\big(\bF_q((y))\big) \times \PGL_2\big(\bF_q((z))\big)
\]
is a discrete subgroup, and $\PGL_2(\bF_q[[y]]) \times \PGL_2(\bF_q[[z]])$ is compact, it follows that the stabilizer 
\[
G(R)_v = (\rho_{y},\rho_z)^{-1}\big(\PGL_2(\bF_q[[y]]) \times \PGL_2(\bF_q[[z]]) \big)
\]
is finite. The explicit splitting of Lemma~\ref{lem:split_z} shows that $G(R)_v$ is a finite subgroup of $\PGL_2(k)$ for the field $k=\bF_q(z)$. Moreover, since it is contained in a stabilizer of a vertex we also have an inclusion 
\[
G(R)_v \subseteq \PGL_2\big(\bF_q[[z]]\big).
\]
The kernel of $\PGL_2\big(\bF_q[[z]]\big) \surj \PGL_2(\bF_q)$, the evaluation in $z = 0$, 
is a pro-$p$-group so that 
\[
N = G(R)_v \cap  \ker\Big(\PGL_2\big(\bF_q[[z]]\big) \surj \PGL_2(\bF_q)\Big) 
\] 
is a normal $p$-subgroup of $G(R)_v$.

\smallskip

We first assume that  $N \not= \one$, so that  $p \mid \#G(R)_v$. Resorting to Dickson's classification of finite subgroups
of $\PGL_2(k)$ for a field $k$ (Dickson \cite{dickson:pgl2} actually classifies subgroups of $\PSL_2(\bF_q)$), see for example  \cite{faber:groupsinpgl2} Theorem~B, the presence of the (non-cyclic, but prime to $p$) dihedral group $\Delta$ in $G(R)_v$ shows that $G(R)_v$ is neither $p$-semi-elementary (in the sense of \cite{faber:groupsinpgl2}) nor isomorphic to $A_5$ (the only dihedral subgroups of $A_5$ are of order $2$, $4$, $6$ and $10$, so not of order $2(q+1)$). It remains the cases of $G(R)_v$ equal to 
\[
\PSL_2(\bF), \quad \PGL_2(\bF)
\]
for a subfield $\bF \subseteq \bF_q$. These groups have no non-trivial normal $p$-subgroup. In particular, in all cases $N = \one$. We thus search a group (by projecting to the constant part)
\[
\Delta \subseteq G(R)_v \subseteq \PGL_2(\bF_q).
\]
By \cite{giudici} Theorem 3.5 (\cite{giudici} treats $q > 3$; for $q=3$ the group $\Delta$ has order $8$ inside 
$\PGL_2(\bF_3)$ of order $24$, and so is maximal) $\Delta$ is a maximal subgroup, so that it remains only to contradict $G(R)_v = \PGL_2(\bF_q)$. 

\smallskip

For a group $H$ we denote by $H^\ab/2$ the maximal abelian quotient of exponent $2$. We argue by contradiction and assume $G(R)_v = \PGL_2(\bF_q)$. The reduced norm induces a diagram 
\[
\xymatrix@M+1ex{
\bF_q[Z]^\ast/\bF_q^\ast \ar[d] \ar[r] & \Delta \ar[r] \ar[d] & \PGL_2(\bF_q) \ar[d] \ar[r] & G(R) \ar[d]^{\Nrd} \\
\bF_q^\ast/(\bF_q^\ast)^2 \ar[r] & \Delta^\ab/2 \ar[r] & \PGL_2(\bF_q)^\ab/2 \ar[r] & R^\ast/(R^\ast)^2.
}
\]
Here $\Delta^\ab/2$ has dimension $2$ as a vector space over $\bF_2$ generated by the classes of $d$ and $s$ that are represented by $\mgen$ and $F$. 
Since $\Nrd(\mgen)$ is a generator of $\bF_q^\ast$ and $\Nrd(F) = t- t^2$ has odd valuation at $t= 0,1$, the induced map 
\[
\Delta^\ab/2 \inj R^\ast/(R^\ast)^2
\]
is injective. However, the group $\PGL_2(\bF_q)^\ab/2$ is cyclic of order $2$, a contradiction.
\end{proof}

\subsection{The lattice is arithmetic} We next show that $\Gamma$ is in fact of finite index in $G(R)$ and thus an arithmetic lattice.

\begin{prop} \label{prop:neighbours}
By means of the Bruhat Tits action, the set $A \subset \Gamma$ (resp.\ $B \subset \Gamma$) maps the distinguished vertex $v_0 = (v_{\tau},v_\infty) \in T \times T$ to the set of vertical (resp.\ horizontal) neighbours:
\begin{eqnarray*}
A.v_0 & = &   \{(v_{\tau},w) \ ; \ \text{ with } w \in T_{\infty} \text{ a neighbour of } v_{\infty} \}, \\
B.v_0 & = &  \{(w,v_\infty) \ ; \ \text{ with } w \in T_{\tau} \text{ a neighbour of } v_{\tau}\}. 
\end{eqnarray*}

\end{prop}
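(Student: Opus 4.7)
\begin{pro*}[Proof plan]
My plan is to verify the two assertions by combining local matrix computations at the two split places $t=\tau$ and $t=\infty$ with the dihedral symmetry in $\Delta$. Concretely, I would (i) show that each $a_\xi$ fixes $v_\tau$ and moves $v_\infty$ to a neighbour, and symmetrically that each $b_\eta$ fixes $v_\infty$ and moves $v_\tau$ to a neighbour; then (ii) use that $\langle d\rangle$ fixes $v_0$ while acting transitively on neighbours in each tree to conclude that $A.v_0$ exhausts the vertical neighbours of $v_0$ and $B.v_0$ exhausts the horizontal ones.

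For step~(i), I would feed $\alpha_\xi$ and $\beta_\eta$ into the explicit splittings from Lemmas~\ref{lem:split_y} and~\ref{lem:split_z}. At $y=0$ both $t$ and $y-1$ evaluate to units, and the formula for $\rho_y(F)$ factors as the scalar $t(y-1)$ times a constant $\bF_q$-valued matrix of determinant $-N(\zeta)\neq 0$; hence $\rho_y(F)\in\GL_2(\fo_\tau)$, and both $\rho_y(\alpha_\xi)$ and $\rho_y(\beta_\eta)$ are $\fo_\tau$-integral. Lemma~\ref{lem:BTneighbours} then reduces the behaviour at $v_\tau$ to the $y$-valuation of the reduced norm, which by \eqref{eq:normab} equals $0$ for $\alpha_\xi$ and $1$ for $\beta_\eta$. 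At $z=0$ the scalar factor of $t$ appearing in $\rho_z(F)$ is no longer a unit and must be cleared first: writing $\rho_z(\alpha_\xi)=tM_\xi$ and $\rho_z(\beta_\eta)=tM_\eta$ with explicit $\fo_\infty$-integral matrices $M_\xi,M_\eta$, one discards the central scalar $t$ in $\PGL_2(K_\infty)$ and computes using the identity $(z-1)^2-1=-1/t$ together with the norm constraints $N(\xi)=-c$ and $N(\eta)=c\tau/(1-\tau)$: this yields $\det M_\xi=c^2z(2-z)$, of $z$-valuation $1$, and $\det M_\eta\equiv c^2/(1-\tau)\pmod{z}$, a unit at $z=0$. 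By Lemma~\ref{lem:BTneighbours} once more, $a_\xi$ moves $v_\infty$ to a neighbour and $b_\eta$ fixes $v_\infty$; combined with the first half, $a_\xi.v_0$ is a vertical and $b_\eta.v_0$ a horizontal neighbour of $v_0$.

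For step~(ii), fix any $\xi_0\in N_c$. Since $d.v_0=v_0$ by Proposition~\ref{prop:stab_v} and $d^i a_{\xi_0}d^{-i}=a_{\xi_0\wp(\delta)^i}$ by \eqref{eq:actiondAB}, the $\langle d\rangle$-orbit of $a_{\xi_0}.v_0$ equals $\{a_{\xi_0\wp(\delta)^i}.v_0 : i\in\bZ/(q+1)\bZ\}$. By Lemma~\ref{lem:hilb90} the element $\wp(\delta)$ has order $q+1$ in the norm-one torus $\bT(\bF_q)$, so the multipliers $\wp(\delta)^i$ sweep $\xi_0$ through the entire $\bT(\bF_q)$-torsor $N_c$, and this orbit coincides with $A.v_0$. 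On the other hand Proposition~\ref{prop:stab_v}\,\ref{propitem:transitiveonneighbours} asserts that $\langle d\rangle$ acts freely and transitively on the $q+1$ vertical neighbours of $v_0$, so this very orbit already is the full set of vertical neighbours, yielding $A.v_0=\{(v_\tau,w):w\text{ neighbour of }v_\infty\}$. The argument for $B$ is identical with $M_\tau$ in place of $N_c$. The main obstacle I anticipate is the careful bookkeeping in step~(i), in particular the normalisation at $t=\infty$ where the non-unit scalar $t$ must be removed before Lemma~\ref{lem:BTneighbours} can be applied; once this is handled correctly, the remainder is a direct consequence of properties of $\Delta$ established earlier.
\end{pro*}
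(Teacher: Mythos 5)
Your proposal is correct and follows essentially the same route as the paper: explicit integrality of representatives via the splittings $\rho_y,\rho_z$ (clearing the scalar $t$ at $t=\infty$), the valuation of the reduced norm/determinant through Lemma~\ref{lem:BTneighbours} to place $a_\xi.v_0$ and $b_\eta.v_0$ among the vertical resp.\ horizontal neighbours, and then conjugation by $\langle d\rangle$ together with Proposition~\ref{prop:stab_v}~\ref{propitem:transitiveonneighbours} to exhaust all neighbours. The only cosmetic difference is that the paper first conjugates all $a_\xi,b_\eta$ into the two normalized elements $\tilde\alpha,\tilde\beta$ before checking integrality, whereas you treat general $\xi,\eta$ directly; both computations are equivalent.
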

\begin{proof}
In view of Lemma~\ref{lem:BTneighbours} we must determine good representatives for $g \in A \cup B$ in $\rM_2(\fo_x)$ 
for $x$ the places $t = \tau$ and $t = \infty$. We set $\tilde{\alpha} = \alpha/t$ and $\tilde{\beta} = \zeta \beta/t$, and choose representatives
\begin{eqnarray*}
\tilde{\alpha}_\xi & = &  \xi \tilde{\alpha}Z \xi^{-1} = t^{-1} \alpha_{Z \wp(\xi)}, \\
\tilde{\beta}_\eta & = &  \eta \tilde{\beta}Z \eta^{-1} = t^{-1} \zeta \beta_{Z\wp(\eta)/\zeta}.
\end{eqnarray*} 
Here we used the identity  \eqref{eq:actiondAB} and the fact that $Z \wp(\xi)$ ranges over $N_c$ (resp.\ $Z\wp(\eta)/\zeta$ ranges over $M_\tau$). 
Because $\bF_q[Z]^\ast/\bF_q^\ast$ maps by Proposition~\ref{prop:stab_v} under both $\rho_y$ and $\rho_z$  isomorphically onto a non-split torus in the constant subgroup $\GL_2(\bF_q)$, it suffices to analyse $\tilde{\alpha}$ and $\tilde{\beta}$ 
in order for $\tilde{\alpha}_\xi$ and $\tilde{\beta}_\eta$ to be good representatives. Now by the formula in 
Lemma~\ref{lem:split_z}  
\begin{eqnarray*}
\rho_z(\tilde{\alpha}) & = & \matzz{0}{c(2-z)}{z}{0}, \\
\rho_z(\tilde{\beta}) & = &  \matzz{c\zeta_1}{c( \zeta_0 + 1-z)}{ \zeta_0  +z - 1}{c\zeta_1}, 
\end{eqnarray*}
and in Lemma~\ref{lem:split_y}
\begin{eqnarray*}
\rho_y(\tilde{\alpha}) & = & \matzz{c\zeta_1(y-1)}{c(1 + \zeta_0(1 - y))}{1+ \zeta_0(y-1)}{c\zeta_1(1-y)}, \\
\rho_y(\tilde{\beta}) & = &  \matzz{c\zeta_1 y}{c \zeta_0(2-y)}{\zeta_0 y}{c \zeta_1 (2-y)}.
\end{eqnarray*} 
This confirms the necessary integrality condition:
\[
\rho_z(\tilde{\alpha}_\xi), \rho_z(\tilde{\beta}_\eta) \in \rM_2(\bF_q[[z]]),
\]
\[
\rho_y(\tilde{\alpha}_\xi), \rho_y(\tilde{\beta}_\eta) \in \rM_2(\bF_q[[y]]).
\]
Since the reduced Norm transforms into the determinant when the quaternion algebra is split, we compute the valuation $\ord_x$ for $x$ the places  $t = \tau$ and $t = \infty$ of the determinants of these chosen integral representatives by means of \eqref{eq:normab} through
\[
\Nrd(\tilde{\alpha}_\xi) =   \frac{c^2}{t},
\]
\[
\Nrd(\tilde{\beta}_\eta) =   \frac{c^2}{\tau} \cdot  \frac{\tau - t}{t}.
\]
Indeed, we find 
\[
A.v_{\tau} = v_{\tau} \text{ and } A.v_{\infty} \subseteq \{\text{neighbours of } v_\infty\},
\]
\[
B.v_{\tau} \subseteq \{\text{neighbours of } v_{\tau}\} \text{ and } B.v_{\infty} = v_\infty.
\]
It remains to show that $A.v_\infty$ and $B.v_{\tau}$ contain  all the respective neighbours. 

\smallskip

For this we exploit the action by conjugation with $\langle d \rangle \subset G(R)_{v_0}$, see Proposition~\ref{prop:stab_v},  under which $A$ and $B$ are orbits, see Proposition~\ref{prop:sactsAB}.  For $g \in A \cup B$ it follows that 
\[
d(g)d^{-1}.v_0 = d.(g.v_0)
\]
so that we conclude by Proposition~\ref{prop:stab_v}~\ref{propitem:transitiveonneighbours}. 
\end{proof}

\begin{cor} \label{cor:transitiveaction}
\begin{enumera}
\item \label{coritem:transitive}
The Bruhat Tits action of $\Gamma$ on $T_\tau \times T_\infty$ is transitive on vertices.
\item  \label{coritem:Lambda}
We have $G(R) = \Lambda$.
\item \label{coritem:arithmeticlattice}
The group $\Gamma$ is a normal subgroup of finite index in $G(R)$ and thus an arithmetic lattice.
\end{enumera}
\end{cor}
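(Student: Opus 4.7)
The plan is to derive all three assertions directly from the two preceding propositions (\ref{prop:stab_v} on vertex stabilizers and \ref{prop:neighbours} on neighbour orbits), combined with the earlier finite-index result \ref{prop:gammaoffiniteindexinlambda}.

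For \ref{coritem:transitive}, the strategy mirrors the abstract argument in the proof of Proposition~\ref{prop:mainabstractBMgroupresult}~\ref{propitem:abstracttransitiveaction}. By Proposition~\ref{prop:neighbours}, the orbit $\Gamma.v_0$ contains all neighbours of $v_0$ in the product $T_\tau \times T_\infty$. Applying conjugates of $A$ and $B$ by elements of $\Gamma$ shows by structure transport that for any $v \in \Gamma.v_0$, all neighbours of $v$ lie in $\Gamma.v_0$ as well. Since $T_\tau \times T_\infty$ is connected and the only subset of its vertices that is both non-empty and closed under taking neighbours is the whole vertex set, we conclude $\Gamma.v_0 = V(T_\tau \times T_\infty)$.

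For \ref{coritem:Lambda}, I would combine transitivity with the computation of the stabilizer. Given any $g \in G(R)$, by \ref{coritem:transitive} choose $\gamma \in \Gamma$ with $\gamma.v_0 = g.v_0$. Then $\gamma^{-1} g$ fixes $v_0$, so by Proposition~\ref{prop:stab_v}
\[
\gamma^{-1} g \in G(R)_{v_0} = \Delta = \langle d,s\rangle.
\]
Since $\Gamma \subseteq \Lambda$ and $\Delta \subseteq \Lambda$, this gives $g \in \Gamma \cdot \Delta \subseteq \Lambda$, proving $G(R) \subseteq \Lambda$; the reverse inclusion is built into Definition~\ref{defi:lattice}.

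For \ref{coritem:arithmeticlattice}, Proposition~\ref{prop:gammaoffiniteindexinlambda} already asserts that $\Gamma$ is normal of finite index in $\Lambda$; substituting $\Lambda = G(R)$ from \ref{coritem:Lambda} yields the claim. That $G(R)$ is itself an arithmetic lattice in $G(K_\tau) \times G(K_\infty)$ was recorded earlier via Behr--Harder (the inclusion \eqref{eq:arithmeticlattice2factors}), so being a finite-index subgroup makes $\Gamma$ an arithmetic lattice as well. No step here is substantial once the preceding propositions are in hand; the only place that requires a moment of care is ensuring that the stabilizer computation of Proposition~\ref{prop:stab_v} applies to the subgroup $G(R)$ (not merely to $\Lambda$), which it does by its very statement.
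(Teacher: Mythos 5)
Your proposal is correct and follows essentially the same route as the paper: transitivity from Proposition~\ref{prop:neighbours} via the criterion of Proposition~\ref{prop:mainabstractBMgroupresult}~\ref{propitem:abstracttransitiveaction} (which you simply re-derive in place), then $G(R) = \Gamma \cdot G(R)_{v_0} = \Gamma \cdot \Delta = \Lambda$ from Proposition~\ref{prop:stab_v}, and finally Proposition~\ref{prop:gammaoffiniteindexinlambda} for normality and finite index. The only difference is expository detail, not substance.
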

\begin{proof}
Assertion \ref{coritem:transitive} follows from Proposition~\ref{prop:neighbours} together with the abstract criterion of 
Proposition~\ref{prop:mainabstractBMgroupresult}~\ref{propitem:abstracttransitiveaction}.  
Since the subgroup $\Gamma \subset G(R)$ acts transitively on $T_\tau \times T_\infty$, by  Proposition~\ref{prop:stab_v}  we have
\begin{equation} \label{eq:gammatimesstabv}
G(R) = \Gamma \cdot G(R)_v = \Gamma \cdot \Delta = \Lambda
\end{equation}
which is assertion \ref{coritem:Lambda}.  Proposition~\ref{prop:gammaoffiniteindexinlambda} and \ref{coritem:Lambda}  then show \ref{coritem:arithmeticlattice}.
\end{proof}

\subsection{Presentations of the arithmetic lattices}  \label{sec:determinepresentations}
Using VH-structures in groups we are now prepared to establish presentations for the arithmetic lattices of interest. That the groups $G(R) = \Lambda$, $\Lambda'$ and $\Gamma$ are finitely presented follows for example from \cite{behr} \S3.1. Indeed, the global rank of $G= \PGL_{1,D}$ is $0$ since our quaternion algebra $D$ is indeed ramified. Here we give explicit finite presentations of these groups.

\begin{thm} \label{thm:presentationgamma}
Let $1 \not= \tau \in \bF_q^\ast$, let $c \in \bF_q^\ast$ be a non-square and let  $\bF_q[Z]/\bF_q$ be the quadratic field extension  with $Z^2 = c$. The group $\Gamma_\tau$ of Definition~\ref{defi:lattice} is a torsion free arithmetic lattice in $G(K)$ with finite presentation 
\[
\Gamma_\tau = \left\langle 
\begin{array}{c}
{a}_\xi, {b}_\eta \text{ for } \xi,\eta \in \bF_q[Z]  \text{ with } \\[1ex]
N(\xi) = -c \text{ and } N(\eta) = \frac{c\tau}{1-\tau}  
\end{array}
\left|
\begin{array}{c}
a_\xi a_{-\xi} = 1, \ b_\eta b_{-\eta} = 1 \\[1ex]
\text{ and }  a_\xi b_\eta = b_\lambda a_\mu   \text{ if in } \bF_q[Z]: \\[1ex]
\eta = \lambda^q(\lambda + \xi)^{1-q}  \text{ and } \mu = \xi^q (\xi+\lambda)^{1-q}
\end{array} 
\right.
\right\rangle
\]
which acts  simply transitively via the Bruhat Tits action on the vertices of $T_{q+1} \times T_{q+1}$.
\end{thm}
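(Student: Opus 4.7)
The plan is to deduce Theorem~\ref{thm:presentationgamma} as a direct application of the abstract criterion Proposition~\ref{prop:mainabstractBMgroupresult} to $G = \Gamma_\tau$ equipped with the VH-structure $(A,B) = (A_\tau, B_\tau)$ of Definition~\ref{defi:AB} and acting on $M = T_{q+1} \times T_{q+1}$ via the Bruhat--Tits action restricted from $G(R')$. All the substantial geometric and arithmetic work has been done in Section~\S\ref{sec:quatlat} and the earlier parts of Section~\S\ref{sec:ggt}; what remains is to assemble these pieces and translate the abstract relations into the explicit algebraic form stated.

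First I would check the hypotheses of Proposition~\ref{prop:mainabstractBMgroupresult}. Proposition~\ref{prop:establishVHstructure}\ref{propitem:BMstructure} shows that $(A,B)$ is a VH-structure in $\Gamma_\tau$. Lemma~\ref{lem:sizeAB} gives $\#A = \#B = q+1$, so the valency vector is $(q+1,q+1)$ with $2m = 2n = q+1$ (recall $q$ is odd). Proposition~\ref{prop:neighbours} identifies $A.v_0$ (respectively $B.v_0$) with the set of vertical (respectively horizontal) neighbors of the distinguished vertex $v_0 = (v_\tau, v_\infty)$. Proposition~\ref{prop:mainabstractBMgroupresult} then yields at once that $\Gamma_\tau$ acts freely and simply transitively on the vertices of $T_{q+1} \times T_{q+1}$, that $\Gamma_\tau \backslash (T_{q+1} \times T_{q+1}) \simeq S_{A,B}$, and that $\Gamma_\tau$ admits the abstract presentation with generators $x_{a_\xi}, x_{b_\eta}$ subject to $x_{a_\xi} x_{a_{-\xi}} = 1$, $x_{b_\eta} x_{b_{-\eta}} = 1$ and $x_{a_\xi} x_{b_\eta} = x_{b_\lambda} x_{a_\mu}$ whenever $a_\xi b_\eta = b_\lambda a_\mu$ holds in $\Gamma_\tau$.

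Torsion-freeness then follows automatically: any finite-order element of a discrete group acting on the complete $\mathrm{CAT}(0)$ space $T_{q+1} \times T_{q+1}$ fixes the centroid of one of its orbits, which lies in a cell of $M$; passing to a vertex at the boundary of a fixed cell, simple transitivity forces the element to be trivial. Arithmeticity of $\Gamma_\tau$ is already contained in Corollary~\ref{cor:transitiveaction}\ref{coritem:arithmeticlattice}.

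The only remaining step is to unfold the abstract condition ``$a_\xi b_\eta = b_\lambda a_\mu$ in $\Gamma_\tau$'' into the explicit algebraic identities stated in the theorem. This is precisely the content of Corollary~\ref{cor:localpermutationwork} together with the system \eqref{eq:comparecoefficientsABequalsBA} and its resolution \eqref{eq:lambdamu_from_xieta}--\eqref{eq:xietau_from_lambdam} established inside the proof of Proposition~\ref{prop:establishVHstructure}\ref{propitem:relation4}; solving for $\eta, \mu$ in terms of $\xi, \lambda$ using the norm identities $N(\xi) = -c$, $N(\lambda) = c\tau/(1-\tau)$ and the Frobenius identity $\bar{w} = w^q$ on $\bF_q[Z] = \bF_{q^2}$ produces the closed forms displayed in the theorem. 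I expect this final translation, essentially a careful bookkeeping in $\bF_{q^2}$, to be the only point where care is required --- keeping the Galois conjugation exponents $1-q$ straight and using $(w_1 \pm w_2)^q = w_1^q \pm w_2^q$ and Hilbert's Theorem~90 (Lemma~\ref{lem:hilb90}) systematically --- but no fundamentally new idea enters here. The heart of the argument is the abstract criterion of Proposition~\ref{prop:mainabstractBMgroupresult}; the theorem is then essentially a corollary of the VH-structure and neighbor-orbit computations already carried out.
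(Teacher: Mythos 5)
Your proposal is correct and follows the paper's own route exactly: the paper proves Theorem~\ref{thm:presentationgamma} by citing Proposition~\ref{prop:neighbours} together with Proposition~\ref{prop:mainabstractBMgroupresult} (parts on simple transitivity and the presentation), with the explicit form of the length-$4$ relations supplied by Corollary~\ref{cor:localpermutationwork}, which is precisely your assembly. The only imprecision is your aside on torsion-freeness: a finite-order element fixing a point of a cell need not fix a vertex of that cell (an element inverting an edge in each tree factor stabilizes a square by a $180^\circ$ rotation with no fixed vertex), so "passing to a vertex of a fixed cell" is not automatic; the cleanest fix is that Proposition~\ref{prop:mainabstractBMgroupresult} identifies the $\Gamma_\tau$-action with the deck-transformation action on the contractible universal cover of $S_{A,B}$, which is free on all cells, whence torsion-freeness.
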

\begin{proof}
This follows from Proposition~\ref{prop:neighbours} together with the abstract criterion of 
Proposition~\ref{prop:mainabstractBMgroupresult}~\ref{propitem:simplytransitive} and \ref{propitem:presentation}. The explicit shape of the relations of length $4$ was established in Corollary~\ref{cor:localpermutationwork}.
\end{proof}

Recall that $\mgen$ is a fixed chosen generator of the multiplicative group $\bF_q[Z]^\ast$,  and $d$ is its image in $\bF_q[Z]^\ast/\bF_q^\ast \subset D^\ast/K^\ast$.

\begin{thm} \label{thm:presentationLambda}
The arithmetic lattices  $G(R)$ and $G(R')$ have finite presentations as follows: with a generator $\mgen \in \bF_q[Z]^\ast$ and $\zeta  \in \bF_q[Z]^\ast$ such that $N(\zeta) = \frac{\tau-1}{\tau}$ we have 
\begin{eqnarray}
G(R)  & =  & \left\langle d,s,a,b \left|  
\begin{array}{c}
d^{q+1} = s^2 = (ds)^2 = 1, \ a^2 = b^2 = (s a)^2 = (s \zeta b)^2 = 1 \\[1ex]
(d^i a d^{-i})(d^{j} b d^{-j}) = (d^l b d^{-l})(d^{k} a d^{-k})  \\[1ex]
\text{ if } 0 \leq i,j,k,l \leq q \text{ satisfy } (\star) 
\end{array} 
\right. 
\right\rangle, \\[2ex]
G(R')  & = & \left\langle \  d,a,b  \ \left| 
\begin{array}{c}
d^{q+1} =  a^2 = b^2 = 1 \\[1ex]
(d^i a d^{-i})(d^{j} b d^{-j}) = (d^l b d^{-l})(d^{k} a d^{-k})  \\[1ex]
\text{ if } 0 \leq i,j,k,l \leq q \text{ satisfy } (\star) 
\end{array} 
\right.
\right\rangle,
\end{eqnarray}
where $(\star)$ is the condition in  $\bF_q[Z]^\ast/\bF_q^\ast$:
\[
(\star) = \left\{ \mgen^{j} = (\mgen^l -  \zeta \wp(\mgen^i) \cdot {\mgen^{ql}})Z \ \text{ and } \
\mgen^{k}  =  (\mgen^i -  \zeta^{-1} \wp(\mgen^l) \cdot \mgen^{qi})Z \right\}.
\]
In particular, $G(R) = \Lambda_\tau = \Gamma_\tau \rtimes \Delta$ and $G(R') = \Lambda'_\tau = \Gamma_\tau \rtimes \langle d \rangle$.
\end{thm}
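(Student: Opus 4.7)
The plan is to exploit the semidirect product structure and reduce to the presentation of $\Gamma_\tau$ already established in Theorem~\ref{thm:presentationgamma}.

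First I would prove the semidirect decompositions $\Lambda = \Gamma \rtimes \Delta$ and $\Lambda' = \Gamma \rtimes \langle d \rangle$. The product $\Lambda = \Gamma \cdot \Delta$ comes from Corollary~\ref{cor:transitiveaction} and the normality of $\Gamma$ from Proposition~\ref{prop:gammaoffiniteindexinlambda}; the intersection $\Gamma \cap \Delta$ is trivial because $\Gamma$ acts simply transitively on vertices while $\Delta = G(R)_{v_0}$ fixes $v_0$. For $\Lambda'$ one checks that $s \notin G(R')$, since $F^2 = t(t-1)$ is not a unit of $R' = \bF_q[t,\frac{1}{t(t-\tau)}]$ (the prime $t-1$ is not inverted there), giving $G(R')_{v_0} = \langle d \rangle$ and hence $\Lambda' = \Gamma \rtimes \langle d \rangle$.

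Next I would relate the generators $a_\xi, b_\eta$ of $\Gamma$ to the generators $d, a, b$. The elements $a = [\gamma_Z]$ and $b = [\gamma_{Z/\zeta}]$ are $2$-torsion because $\gamma_\xi^2 = ct^2 + N(\xi) t(t-1) \in K^\ast$, and Proposition~\ref{prop:sactsAB} yields $d^i a d^{-i} = [\gamma_{Z\wp(\delta^i)}]$ (and similarly for $b$). These conjugates differ from the original $a_\xi = [\gamma_\xi Z]$ by the factor $z = [Z] \in G(K)$, which equals $d^{(q+1)/2}$ as the unique element of order $2$ in $\langle d \rangle$. Using $z^2 = 1$ and the commutation $z a_\xi z^{-1} = a_{-\xi}$ (from $\wp(Z) = -1$), the VH-relation $a_\xi b_\eta = b_\lambda a_\mu$ in $\Gamma$ becomes, after absorbing the $z$-factors and the sign shift $j \mapsto j + (q+1)/2$, the relation $(d^i a d^{-i})(d^j b d^{-j}) = (d^l b d^{-l})(d^k a d^{-k})$ in $\Lambda'$.

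The central computation is then to unfold the system $\xi \pm \eta = \lambda \pm \mu$ and $\xi\bar\eta = \lambda\bar\mu$ of Proposition~\ref{prop:establishVHstructure} (with the sign corrections from the previous step) into the explicit congruences $(\star)$. Substituting $\xi = Z\wp(\delta^i)$, $\eta = (Z/\zeta)\wp(\delta^j)$, $\lambda = (Z/\zeta)\wp(\delta^l)$, $\mu = Z\wp(\delta^k)$, and using the identities $\bar\delta = \delta^q$, $\wp(\delta^i) = \delta^{i(1-q)}$, $N(Z) = -c$, along with the injectivity of $\wp$ on $\bF_q[Z]^\ast/\bF_q^\ast$ from Lemma~\ref{lem:hilb90}, reduces each VH-equation to one of the congruences in $(\star)$ modulo $\bF_q^\ast$. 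This is a direct but delicate calculation inside the cyclic group of order $q+1$.

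Finally, for $\Lambda = G(R)$ I would add the dihedral relations $s^2 = (ds)^2 = 1$ from Lemma~\ref{lem:dihedralgroup}, together with the relations encoding the conjugation action of $s$ on $a, b$: from Proposition~\ref{prop:sactsAB}(3) one obtains $sas = [\gamma_{-\bar Z}] = a$ (using $-\bar Z = Z$), equivalently $(sa)^2 = 1$, and $sbs = [\gamma_{Z/\bar\zeta}]$, which equals $d^m b d^{-m}$ for the integer $m$ with $\delta^m \equiv \zeta \pmod{\bF_q^\ast}$; combined with $s d^m s = d^{-m}$ this rearranges to $(s\zeta b)^2 = 1$. A standard argument on presentations of semidirect products then shows the listed relations suffice, and the presentation of $\Lambda'$ is recovered by dropping $s$ and its relations. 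The main obstacle I expect is the careful tracking of scalars in the VH step, matching the two parametrizations $a_\xi$ and $d^i a d^{-i}$, which differ by the square-root element $z = d^{(q+1)/2}$.
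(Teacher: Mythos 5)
Your proposal is correct and takes essentially the same route as the paper's proof: establish $G(R)=\Gamma\rtimes\Delta$ and $G(R')=\Gamma\rtimes\langle d\rangle$ (the paper makes your ``$t-1$ not inverted'' remark precise via the parity of $\ord_{t-1}\Nrd(F)=\ord_{t-1}t(t-1)$, which is needed since $s$ is only defined up to $K^\ast$-scaling), check the mixing relations $(sa)^2=(s\zeta b)^2=1$ from Proposition~\ref{prop:sactsAB}, and translate the VH-relations of Theorem~\ref{thm:presentationgamma} into $(\star)$ by tracking the factor $[Z]=d^{(q+1)/2}$, exactly your $z$-bookkeeping. The concluding appeal to the standard semidirect-product presentation is at the same level of detail as the paper itself, so there is no substantive difference in approach.
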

\begin{proof}
By \eqref{eq:gammatimesstabv} of Corollary~\ref{cor:transitiveaction} we know that 
\[
G(R) = \Gamma \cdot \Delta = \Lambda.
\]
Since $\Gamma$ is torsion free, we have $\Gamma \cap \Delta = \one$, and since further $\Gamma$ is normal in $\Lambda$ by 
Corollary~\ref{cor:transitiveaction}~\ref{coritem:arithmeticlattice}, we conclude that $G(R) = \Lambda$ is the semi direct product of $\Delta$ acting on $\Gamma$ as claimed.

The semi-direct product structure for $\Lambda' = \Gamma \rtimes \langle d \rangle$ follows similarly. To show that $\Lambda' \subseteq G(R')$ is in fact an equality we estimate the index as $(\Lambda:\Lambda') \leq \#\langle s \rangle = 2$. But $s$ lifts to $F$ and 
\[
\Nrd(F) = t(t-1) \in K^\ast/(K^\ast)^2
\]
is nontrivial at $t = 1$ so that $s \notin G(R')$. Therefore 
\[
(G(R):G(R'))  = 2
\]
and $\Lambda' = G(R')$. 

The relations mixing $d,s$ and $a,b$  follow from the description of how the dihedral group $\Delta$ acts on $\Gamma$ given in Proposition~\ref{prop:sactsAB}, for example computing in $G(K)$:
\[
(sa)^2 = sa_Z Zsa = a_{-\bar Z} (sZs) a = a_Z Z a = a^2 = 1,
\]
\[
(s\zeta b)^2 = s \zeta b_{Z/\zeta}Z s \zeta b = s(\zeta b_{Z/\zeta} \zeta^{-1} )s Z b = sb_{Z/\bar \zeta} sZb = b_{Z/\zeta} Z b = b^2 = 1.
\]
The remaining relations follow by translating Theorem~\ref{thm:presentationgamma}: since $Z^2 = c$ is central we can lift 
\[
(d^i a d^{-i})(d^{j} b d^{-j})
\]
to $D^\ast$  as, using \eqref{eq:actiondAB} and $\wp(Z) = -1$, 
\[
(\mgen^i a_Z Z \mgen^{-i}) (\mgen^j b_{Z/\zeta} Z \mgen^{-j})/Z^2 = a_{Z\wp(\mgen^i)} Z b_{Z\wp(\mgen^j)/\zeta} Z^{-1} = a_{Z\wp(\mgen^i)} \cdot b_{-Z\wp(\mgen^j)/\zeta}.
\]
Therefore the four term relation 
\[
(d^i a d^{-i})(d^{j} b d^{-j}) = (d^l b d^{-l})(d^{k} a d^{-k})
\]
holds if and only if 
\[
a_{Z\wp(\mgen^i)} \cdot b_{-Z\wp(\mgen^j)/\zeta} = b_{Z\wp(\mgen^l)/\zeta} \cdot a_{-Z\wp(\mgen^k)},
\]
by  Corollary~\ref{cor:localpermutationwork} if and only if 
\begin{eqnarray*}
- Z\wp(\mgen^j)/\zeta & = & \ov{Z\wp(\mgen^l)/\zeta} \cdot \big(Z\wp(\mgen^l)/\zeta -  Z\wp(\mgen^i)\big)^{1-q}, \\
- Z\wp(\mgen^k) & = & \ov{Z\wp(\mgen^i)} \cdot \big(Z\wp(\mgen^i) -  Z\wp(\mgen^l)/\zeta\big)^{1-q}.
\end{eqnarray*}
These equations are equivalent to 
\begin{eqnarray*}
\wp(\mgen^{j+l}/\zeta) & = & \wp\big(Z\wp(\mgen^l)/\zeta -  Z\wp(\mgen^i)\big), \\
\wp(\mgen^{k+i}) & = & \wp\big(Z\wp(\mgen^i) -  Z\wp(\mgen^l)/\zeta\big),
\end{eqnarray*}
and further to condition $(\star)$ in $\bF_q[Z]^\ast/\bF_q^\ast$ due to Lemma~\ref{lem:hilb90} and $\delta^{1+q} \in \bF_q^\ast$.
\end{proof}

\section{Group theory of the quaternionic lattice} \label{sec:finitegrouptheory}
In this section we collect various group theoretic information on our arithmetic lattices.

\subsection{Local structure} \label{sec:localstructure}
The VH-structure $A,B$ in $\Gamma$ determines two permutation groups as follows. There are unique maps of sets 
\[
A  \to \Aut(B), \qquad a  \mapsto \sigma^B_a, 
\]
\[
B \to \Aut(A), \qquad b \mapsto  \sigma^A_b.
\]
such that in $\Gamma$:
\[
a \sigma^B_a(b) = b \sigma^A_b(a).
\]
In the corresponding square complex we find squares 
\[
\xymatrix@M0ex@R-2ex@C-2ex{
\bullet \ar[r]^(1){\sigma_a^B(b)} & \ar@{-}[r] & \bullet \\
\ar@{-}[u] & & \ar@{-}[u] \\
\bullet \ar[r]_(1){b} \ar[u]^(1){a} & \ar@{-}[r] & \bullet \ar[u]_(1){\sigma_b^A(a)} \\
}
\]
The maps $a \mapsto \sigma_a^B$ and $b \mapsto \sigma_b^A$ are called the \textbf{local permutation structure} of the VH-structure $A,B \subset \Gamma$, and the \textbf{local structure} is given by the permutation groups generated by the images
\begin{eqnarray*}
P_A & = & \langle \sigma^A_b \ ; \ \text{ all } b \in B \rangle \\
P_B & = & \langle \sigma^B_a \ ; \ \text{ all } a \in A \rangle
\end{eqnarray*}
together with the permutation action on $B$ respectively on $A$. These two permutation groups are nothing but the 
local groups  in the sense of \cite{burger-mozes:lattices}~\S6.1 associated to  $\Gamma$ as a lattice in 
$\Aut(T_{q+1}) \times \Aut(T_{q+1})$.

\subsubsection{The local permutations}  
The algebraic origin of our lattice allows us to determine these permutation groups. Recall that the sets $A,B$ of the VH-structure of $\Gamma$ are parametrized by the set of rational points of a variety isomorphic to $\bP^1$:
\begin{eqnarray*}
\bT_{-c}(\bF_q)  & = & A, \\
\bT_{\frac{c\tau}{1-\tau}}(\bF_q)  & = & B
\end{eqnarray*}
via $\xi \mapsto a_\xi$ and $\eta \mapsto b_\eta$, see Definition~\ref{defi:lattice}.

\begin{prop} \label{prop:localperm}
\begin{enumera}
\item \label{propitem:permutationstructureofGammaquotient}
The local permutation structure of the VH-structure $A,B \subset \Gamma$ is given by  the maps $\xi \mapsto \sigma_\xi^B$ and $\eta \mapsto \sigma_\eta^A$ defined by the following elements of $\Aut(\bT_{\frac{c\tau}{1-\tau}})$ respectively $\Aut(\bT_{-c})$:
\begin{eqnarray*}
\sigma^A_\eta:  w & \mapsto & \sigma_{\eta}(w) = \bar{w} \cdot \wp(w - \eta), \\ 
\sigma^B_\xi: w & \mapsto & \sigma_\xi(w) = \bar{w} \cdot \wp(w - \xi),
\end{eqnarray*}
\item For $\lambda \in \bT(\bF_q)$ we have the following conjugation relations
\begin{eqnarray*}
\sigma_{\lambda \xi}^B & = & \lambda \sigma_\xi^B \lambda^{-1}, \\
\sigma_{\lambda \eta}^A & = & \lambda \sigma_\eta^A \lambda^{-1},
\end{eqnarray*}
where $\lambda$ and $\lambda^{-1}$ are the multiplication maps by $\lambda$ and $\lambda^{-1}$ respectively.
\item
As elements of $\bF_q^\ast/(\bF_q^\ast)^2$ we have determinants, for $\xi \in \bT_{-c}(\bF_q)$ and $\eta \in \bT_{\frac{c\tau}{1-\tau}}$
\begin{eqnarray*}
\det(\sigma_\eta^A) & = & \frac{1}{1-\tau},\\
\det(\sigma_\xi^B) & = & \frac{1}{\tau}.
\end{eqnarray*}
\end{enumera}
\end{prop}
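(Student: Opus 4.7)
\begin{pro*}[Proof proposal]
The plan is to identify each of the three assertions with a direct computation that piggybacks on results already established in Section~\S\ref{sec:quatlat}.

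For \ref{propitem:permutationstructureofGammaquotient}, the local permutation structure is by definition the unique pair of maps $a \mapsto \sigma_a^B$ and $b \mapsto \sigma_b^A$ satisfying $a\,\sigma_a^B(b) = b\,\sigma_b^A(a)$ in $\Gamma$. Uniqueness here is exactly the uniqueness of solutions $(\mu,\lambda)$ in Proposition~\ref{prop:establishVHstructure}~\ref{propitem:relation4}. Hence I would simply invoke Corollary~\ref{cor:localpermutationwork}: the identity
\[
a_\xi\, b_{\sigma_\xi(\lambda)} = b_\lambda\, a_{\sigma_\lambda(\xi)}
\]
holds in $\Gamma$ with $\sigma_\xi$ and $\sigma_\lambda$ the automorphisms introduced in Lemma~\ref{lem:sigma_xi}. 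Reading this against the defining equation of the local permutation structure identifies $\sigma_{a_\xi}^B$ with $\sigma_\xi$ acting on $\bT_{c\tau/(1-\tau)}$ and $\sigma_{b_\eta}^A$ with $\sigma_\eta$ acting on $\bT_{-c}$. This gives (1) without further computation.

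For \ref{propitem:permutationstructureofGammaquotient} $\Rightarrow$ (2) I would plug the explicit formula $\sigma_\xi(w) = \bar w \cdot \wp(w-\xi)$ into the left hand side and unwind. For $\lambda \in \bT(\bF_q)$, so $N(\lambda) = \lambda\bar\lambda = 1$, a direct computation yields
\[
\lambda\,\sigma_\xi(\lambda^{-1}w) \;=\; \lambda\,\overline{\lambda^{-1}w}\,\wp(\lambda^{-1}w - \xi) \;=\; \frac{\lambda}{\bar\lambda}\,\bar w \cdot \wp(\lambda^{-1})\,\wp(w - \lambda\xi).
\]
Since $\bar\lambda = \lambda^{-1}$ we have $\lambda/\bar\lambda = \lambda^2$ and $\wp(\lambda^{-1}) = \lambda^{-1}/\bar\lambda^{-1} = \lambda^{-2}$, and the two factors cancel to leave $\bar w\,\wp(w - \lambda\xi) = \sigma_{\lambda\xi}(w)$. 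The same argument works for $\sigma_\eta^A$. This gives the conjugation relations in (2).

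For (3), the determinants are already computed modulo squares in Lemma~\ref{lem:detsigma_xi}: after choosing an element $\mu\in\bF_q[Z]$ of norm $u$ to coordinatize $\bT_u$, one has $\det(\sigma_\xi) \equiv 1 - N(\xi)/u$ in $\bF_q^\ast/(\bF_q^\ast)^2$. I would just substitute. For $\sigma_\xi^B$ on $\bT_{c\tau/(1-\tau)}$ with $N(\xi) = -c$,
\[
\det(\sigma_\xi^B) \;\equiv\; 1 - \frac{-c}{c\tau/(1-\tau)} \;=\; 1 + \frac{1-\tau}{\tau} \;=\; \frac{1}{\tau},
\]
and for $\sigma_\eta^A$ on $\bT_{-c}$ with $N(\eta) = c\tau/(1-\tau)$,
\[
\det(\sigma_\eta^A) \;\equiv\; 1 - \frac{c\tau/(1-\tau)}{-c} \;=\; 1 + \frac{\tau}{1-\tau} \;=\; \frac{1}{1-\tau},
\]
as required. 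No step seems to present a genuine obstacle; the only thing that requires a moment's care is checking that the coordinate change used in Lemma~\ref{lem:detsigma_xi} really does yield a determinant well-defined modulo $(\bF_q^\ast)^2$ independently of $\mu$, but this is already built into that lemma since changing $\mu$ only modifies the isomorphism $\bP^1 \simeq \bT_u$ by an element of $\PGL(\bF_q[Z])$ and hence the matrix of $\sigma_\xi$ by conjugation, which does not affect the determinant class.
\end{pro*}
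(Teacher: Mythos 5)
Your proposal is correct and takes essentially the same route as the paper: assertion (1) is exactly the invocation of Corollary~\ref{cor:localpermutationwork}, assertion (2) is the same elementary computation (the paper phrases it in $\GL(\bF_q[Z])$, you verify it on rational points, which suffices since an automorphism of $\bT_u\simeq\bP^1$ is determined by its $q+1\geq 4$ rational points), and assertion (3) rests on Lemma~\ref{lem:detsigma_xi}. The only cosmetic difference is in (3): the paper first uses (2) to note that all the $\sigma_\xi^B$ (resp.\ $\sigma_\eta^A$) are conjugate and then evaluates the single representatives $\sigma_Z$ and $\sigma_{Z/\zeta}$, whereas you substitute general norms into $\det(\sigma_\xi)\equiv 1-N(\xi/\mu)$, which depends only on $N(\xi)$ and $u$ and gives the same values.
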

\begin{proof}
Assertion (1) recalls Corollary~\ref{cor:localpermutationwork}, and assertion (2) is an  elementary computation
in $\GL(\bF_q[Z])$. 

By assertion (2) all local permutations for $\xi \in A$ (resp.\ $\eta \in B$) are conjugate and thus share the same determinant. It thus suffices to compute one of them by Lemma~\ref{lem:detsigma_xi} as 
\[
\det(\sigma_\eta^A)  =  \det(\sigma_{Z/\zeta})  =  \det( w \mapsto w - \frac{Z/\zeta}{Z} \bar w) = 1 - N(1/\zeta) = 1 - \frac{\tau}{\tau-1} = \frac{1}{1-\tau},
\]
\[
\det(\sigma_\xi^B)  =  \det(\sigma_{Z})  =  \det( w \mapsto w - \frac{Z}{Z/\zeta} \bar w) = 1 - N(\zeta)  = 1 - \frac{\tau-1}{\tau} = \frac{1}{\tau} ,
\]
and assertion (3) follows.
\end{proof}

\begin{cor} \label{cor:explicitclassifyingspace}
The classifying space of $\Gamma_\tau$  has a realization as the finite square complex
\[
S_{\Gamma_\tau} = \Gamma_\tau \backslash T_{q+1} \times T_{q+1}
\]
with the following explicit description.
\begin{enumera}
\item 
Vertices: There is only one vertex.
\item 
Edges: There is a vertical oriented edge $a_\xi$ for every $\xi \in \bF_q[Z]$ of norm $N(\xi) = -c$, a horizontal oriented edge $b_\eta$ for every $\lambda \in \bF_q[Z]$ of norm $N(\lambda) = \frac{c\tau}{1-\tau}$. The orientation reversion map is $a_\xi \mapsto a_{-\xi}$ and $b_\lambda \mapsto b_{-\lambda}$.
\item 
Squares: For every pair $a_\xi$, $b_\lambda$ of an oriented horizontal and an oriented vertical edge there is a square in $S_{\Gamma_\tau}$
\[
\xymatrix@M0ex@R-2ex@C-2ex{
\bullet \ar[r]^(1){b_{\sigma_\xi(\lambda)}} & \ar@{-}[r] & \bullet \\
\ar@{-}[u] & & \ar@{-}[u] \\
\bullet \ar[r]_(1){b_\lambda} \ar[u]^(1){a_\xi} & \ar@{-}[r] & \bullet \ar[u]_(1){a_{\sigma_\lambda(\xi)}} \\
}
\]
each of which is constructed four times, but in fact occurs in $S_{\Gamma_\tau}$ only once.
\end{enumera}
\end{cor}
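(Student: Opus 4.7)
The plan is that this corollary is essentially a packaging statement: every ingredient has already been prepared in the preceding sections, so the proof consists of assembling Theorem~\ref{thm:presentationgamma}, Proposition~\ref{prop:mainabstractBMgroupresult}, and Corollary~\ref{cor:localpermutationwork} (together with Proposition~\ref{prop:localperm}) into a single geometric statement about the finite square complex $S_{\Gamma_\tau}$.

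First, I would establish that $S_{\Gamma_\tau}$ is a finite classifying space of dimension $2$. By Theorem~\ref{thm:presentationgamma}, the lattice $\Gamma_\tau$ acts simply transitively on the vertices of $M = T_{q+1} \times T_{q+1}$ via the Bruhat--Tits action. Since $\Gamma_\tau$ is torsion free and every finite subgroup of $\Aut(M)$ stabilizes a cell (as $M$ is a CAT(0) space, already noted in Section~\S\ref{sec:groupsontrees} for the torsion-free-lattices paragraph), the action is in fact free on all of $M$, not merely free and transitive on the $0$-skeleton. Hence $M \to S_{\Gamma_\tau}$ is a regular cellular covering with deck group $\Gamma_\tau$, and because $M$ is contractible (a product of trees), $S_{\Gamma_\tau}$ is a finite $\rK(\Gamma_\tau,1)$ of dimension $2$.

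Next, I would read off the combinatorial structure. Applying Proposition~\ref{prop:mainabstractBMgroupresult}(ii)(b) to the VH-structure $(A,B) \subset \Gamma_\tau$ produced in Proposition~\ref{prop:establishVHstructure}, the quotient $S_{\Gamma_\tau}$ is canonically isomorphic to the one-vertex square complex $S_{A,B}$ from construction \eqref{eq:constructionSAB}. By that construction, $S_{\Gamma_\tau}$ has a single vertex; its oriented edges form the set $A \sqcup B$, hence by Definition~\ref{defi:AB} are labelled by $\xi \in N_c$ and $\lambda \in M_\tau$ via $\xi \mapsto a_\xi$ and $\lambda \mapsto b_\lambda$. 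The orientation-reversal formulas $a_\xi \mapsto a_{-\xi}$ and $b_\lambda \mapsto b_{-\lambda}$ are exactly the identities $\alpha_\xi \alpha_{-\xi} \in K^\ast$ and $\beta_\eta \beta_{-\eta} \in K^\ast$ verified in the proof of Proposition~\ref{prop:sactsAB}(1). Finally, squares in $S_{A,B}$ correspond to relations of the form $a_\xi b_\lambda = b_{\lambda'} a_{\xi'}$ in $\Gamma_\tau$, and by Corollary~\ref{cor:localpermutationwork} the unique solution is $\lambda' = \sigma_\xi(\lambda)$ and $\xi' = \sigma_\lambda(\xi)$, with $\sigma$ as defined there. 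The multiplicity remark (each square constructed four times but appearing only once in $S_{\Gamma_\tau}$) is precisely the point discussed in Section~\S\ref{sec:VHstructure} right after equation~\eqref{eq:relationabba}, where non-degeneracy of the four equivalent relations is guaranteed by Proposition~\ref{prop:establishVHstructure}(iii) ensuring no $2$-torsion in $AB$.

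I do not expect any genuinely hard step. The only point worth checking with some care is the strengthening from \emph{simply transitive on vertices} to \emph{free on all cells} before invoking contractibility of $M$ to deduce the $\rK(\Gamma_\tau,1)$ property; everything else is bookkeeping that translates earlier propositions into the pictorial description given in the statement.
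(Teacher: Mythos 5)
Your argument is correct, and it reaches the statement by a slightly different (more self-contained) route than the paper. The paper's own proof is a one-line reduction: it invokes Proposition~\ref{prop:localperm}, i.e.\ the local permutation structure $\xi \mapsto \sigma^B_\xi$, $\eta \mapsto \sigma^A_\eta$, together with the fact from Burger--Mozes \S6.1 that the local permutation structure of a VH-structure already encodes the one-vertex square complex. You instead route everything through the paper's internal machinery: Proposition~\ref{prop:mainabstractBMgroupresult}(2)(b) identifying $\Gamma_\tau \backslash T_{q+1}\times T_{q+1}$ with $S_{A,B}$ from construction \eqref{eq:constructionSAB} (whose hypotheses are supplied by Proposition~\ref{prop:establishVHstructure} and Proposition~\ref{prop:neighbours}, exactly as in the proof of Theorem~\ref{thm:presentationgamma}), and then Corollary~\ref{cor:localpermutationwork} to write the squares explicitly as $a_\xi b_{\sigma_\xi(\lambda)} = b_\lambda a_{\sigma_\lambda(\xi)}$; the ``four times but once'' count is indeed the non-degeneracy point guaranteed by the absence of $2$-torsion in $AB$. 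The mathematical content is the same, since the cited Burger--Mozes fact is precisely what construction \eqref{eq:constructionSAB} and Proposition~\ref{prop:mainabstractBMgroupresult} formalize, but your version avoids the external citation. You also do something the paper's proof leaves implicit: you justify the classifying-space claim by upgrading simple transitivity on vertices to a free cellular action (torsion-freeness plus finiteness of cell stabilizers for a discrete group, as recalled in \S\ref{sec:groupsontrees}) and then using contractibility of the product of trees to conclude that $S_{\Gamma_\tau}$ is a finite two-dimensional $\rK(\Gamma_\tau,1)$; that is exactly the right supplement, and it is the only step in the corollary that is not pure bookkeeping.
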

\begin{proof}
This follows immediately form Proposition~\ref{prop:localperm} \ref{propitem:permutationstructureofGammaquotient} and the fact recalled in \cite{burger-mozes:lattices}~\S6.1 page 181 that the local permutation structure encodes the underlying square complex with one vertex. 
\end{proof}

\subsubsection{The local groups} So far our lattices $\Gamma_\tau$ have been quite uniform in the parameter $\tau$. The local groups attached to the combinatorial structure of a VH-structure in a group shows some mild diophantine dependence on $\tau$.

\begin{prop} \label{prop:determinelocalstructure}
The local structure of the VH-structure $A,B \subset \Gamma_\tau$ is as follows:
\begin{eqnarray*}
P_A & = & \left\{\begin{array}{c}
\PGL(\bF_q[Z]) \\
\PSL(\bF_q[Z]) 
\end{array}
\right.
 \text{ if } 1-\tau \text{ is }
\left\{\begin{array}{c}
\text{ not a square }  \\
\text{ a square } 
\end{array}
\right.
\text{ in } \bF_q^\ast \\ 
P_B & = & \left\{\begin{array}{c}
\PGL(\bF_q[Z]) \\
\PSL(\bF_q[Z]) 
\end{array}
\right.
 \text{ if } \tau \text{ is }
\left\{\begin{array}{c}
\text{ not a square }  \\
\text{ a square } 
\end{array}
\right.
\text{ in } \bF_q^\ast
\end{eqnarray*}
as permutation groups acting naturally on $\bP(\bF_q[Z])$ when identified with $A$ (resp.\ $B$).
\end{prop}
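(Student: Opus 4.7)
\begin{pro*}[Proof plan]

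The plan is to establish containment in both directions: determinants give the upper bound, and an application of Dickson's classification combined with a normality argument gives the lower bound.

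For the upper bound, Proposition~\ref{prop:localperm}(3) shows that every generator $\sigma_\eta^A$ of $P_A$ has $\det(\sigma_\eta^A) \equiv 1/(1-\tau)$ in $\bF_q^\ast/(\bF_q^\ast)^2$. Since $\PSL_2(\bF_q)$ is precisely the kernel of the determinant-modulo-squares homomorphism $\PGL_2(\bF_q) \to \bF_q^\ast/(\bF_q^\ast)^2$, this forces $P_A \subseteq \PSL(\bF_q[Z])$ exactly when $1-\tau \in (\bF_q^\ast)^2$, and analogously for $P_B$ using $\det(\sigma_\xi^B) \equiv 1/\tau$.

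For the lower bound $P_A \supseteq \PSL(\bF_q[Z])$: the non-split torus $\bT(\bF_q)$ of order $q+1$ embeds in $\PSL(\bF_q[Z])$ (its multiplication action has determinant $N(\lambda) = 1$), and by Proposition~\ref{prop:localperm}(2) it normalizes $P_A$ and acts transitively on the generating set $\{\sigma_\eta^A\}_{\eta \in B}$. Setting $G := \bT(\bF_q) \cdot P_A$, the explicit matrix form of $\sigma_\eta^A$ supplied by Lemma~\ref{lem:detsigma_xi} (with reference point $\mu = Z$) shows by direct comparison that $\sigma_\eta^A \notin N(\bT(\bF_q))$ whenever $\eta/Z$ has nonzero $Z$-component, which holds for all but at most two $\eta \in B$. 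Hence $G$ strictly contains the dihedral normalizer $N(\bT(\bF_q))$ of order $2(q+1)$. Dickson's classification of subgroups of $\PGL_2(\bF_q)$ (cf.\ \cite{faber:groupsinpgl2}) then forces $G \supseteq \PSL_2(\bF_q)$, once the possibilities coming from the exceptional subgroups $A_4, S_4, A_5$ and the subfield subgroups $\PGL_2(\bF_{q'})$ are ruled out by comparison of orders (the borderline $q = 3$ case is handled by an explicit calculation).

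To upgrade this to $P_A \supseteq \PSL(\bF_q[Z])$, observe that since $\bT(\bF_q) \subseteq \PSL(\bF_q[Z])$, one has $G \cap \PSL(\bF_q[Z]) = \bT(\bF_q) \cdot (P_A \cap \PSL(\bF_q[Z]))$, which equals $\PSL(\bF_q[Z])$ by the previous step. Because $\bT(\bF_q)$ also normalizes $P_A \cap \PSL(\bF_q[Z])$ (as $\PSL$ is normal in $\PGL$), the $\PSL$-normalizer of $P_A \cap \PSL(\bF_q[Z])$ contains $\bT(\bF_q) \cdot (P_A \cap \PSL(\bF_q[Z])) = \PSL(\bF_q[Z])$, so $P_A \cap \PSL(\bF_q[Z])$ is a normal subgroup of $\PSL_2(\bF_q)$. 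By simplicity of $\PSL_2(\bF_q)$ for $q \geq 4$, this intersection must equal all of $\PSL_2(\bF_q)$, yielding $P_A \supseteq \PSL(\bF_q[Z])$. The small case $q = 3$ is dispatched by exhibiting an element of order $3$ in $P_A \cap \PSL$, namely a suitable product $\sigma_{\eta_1}^A \sigma_{\eta_2}^A$, to rule out the Klein four subgroup of $\PSL_2(\bF_3) = A_4$ as a possibility. The argument for $P_B$ is identical via the symmetry $(A, 1-\tau) \leftrightarrow (B, \tau)$.

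The main technical obstacle is the final upgrade from the containment $\bT(\bF_q) \cdot P_A \supseteq \PSL_2(\bF_q)$ (which is what Dickson's theorem gives directly) to $P_A \supseteq \PSL_2(\bF_q)$ itself, which exploits the interplay between the normalizer structure provided by the torus and the simplicity of $\PSL_2(\bF_q)$. The supporting matrix calculation showing $\sigma_\eta^A \notin N(\bT(\bF_q))$ and the handling of the small-$q$ exceptional cases are the other points requiring care.

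\end{pro*}
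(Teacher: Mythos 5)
Your outline (torus normalizing $P_A$, a Dickson/Giudici-type classification, simplicity of $\PSL_2$, and the determinant computation to separate $\PSL$ from $\PGL$) runs parallel to the paper's argument, but the final ``upgrade'' step rests on a false claim: the group that conjugates the $q+1$ generators $\sigma^A_\eta$ transitively is cyclic of order $q+1$, and no such subgroup of $\PGL(\bF_q[Z])$ can lie in $\PSL(\bF_q[Z])$ --- for odd $q$ the element orders in $\PSL_2(\bF_q)$ divide $p$, $(q-1)/2$ or $(q+1)/2$, never $q+1$. Concretely, in the coordinate \eqref{eq:deficoordinateonTu} used in Lemma~\ref{lem:detsigma_xi}, multiplication by a norm-one element $\lambda=\wp(\nu)$ on $\bT_u$ becomes $W\mapsto \nu W$, whose determinant is $N(\nu)$, a non-square when $\lambda$ generates the torus; the ``$\det=N(\lambda)=1$'' you invoke refers to multiplication on the linear space $\bF_q[Z]$, i.e.\ to the index-two subgroup of order $(q+1)/2$, which is indeed in $\PSL$ but does \emph{not} act transitively on the $q+1$ generators. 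Since the order-$(q+1)$ torus is not contained in $\PSL$, the identity $G\cap\PSL(\bF_q[Z])=\bT(\bF_q)\cdot(P_A\cap\PSL(\bF_q[Z]))$ (Dedekind's modular law requires $\bT(\bF_q)\subseteq\PSL$) fails, and with it your derivation that $P_A\cap\PSL(\bF_q[Z])$ is normal and of full size. A smaller slip in the same step: from $\sigma^A_\eta\notin N(\bT(\bF_q))$ you may conclude $G\not\subseteq N(\bT(\bF_q))$, not $G\supsetneq N(\bT(\bF_q))$; this weaker statement is, however, all that the classification argument needs.

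The proof is repairable, and the repair is essentially what the paper does: instead of forming $G=\bT(\bF_q)\cdot P_A$ and then trying to descend to $P_A$, apply the subgroup classification directly to the normalizer $N=N_{\PGL(\bF_q[Z])}(P_A)$, which contains both $P_A$ and the order-$(q+1)$ torus. By the list of maximal subgroups (\cite{giudici}, with $q=3$ checked by hand), either $N$ is contained in the dihedral normalizer of that torus --- impossible, because the generating set of $P_A$ is a single conjugation orbit of size $q+1$ inside $N$, too large for that dihedral group --- or $N=\PGL(\bF_q[Z])$. Hence $P_A$ is normal in $\PGL(\bF_q[Z])$; since $P_A$ clearly has order larger than $2$, simplicity of $\PSL_2(\bF_q)$ for $q>3$ (and inspection for $q=3$) forces $P_A\in\{\PSL(\bF_q[Z]),\PGL(\bF_q[Z])\}$, and your determinant computation, which coincides with the paper's use of Proposition~\ref{prop:localperm}, decides which case occurs. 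With this substitution your explicit matrix verification that some $\sigma^A_\eta$ lies outside the normalizer of the torus, and the separate treatment of small $q$, become unnecessary.
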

\begin{proof}
Let us abbreviate $H = P_A$ or $P_B$. By Proposition~\ref{prop:localperm} this is a subgroup 
\[
H \subseteq \PGL(\bF_q[Z])
\]
is generated by the orbit  of a non-trivial element under conjugation by the non-split torus
\[
C := \bF_q[Z]^\ast/\bF_q^\ast \subseteq \Aut(\bT_u) \simeq \PGL(\bF_q[Z])
\]
for $u=-c$ (resp.\ $u = \frac{c\tau}{1-\tau}$).
Let $N \subseteq \PGL(\bF_q[Z])$ be the normalizer of $H$. It follows that $C  \subseteq N$. By inspection of the list of maximal subgroups \cite{giudici} Theorem 3.5, it follows that $N$ is either contained in the normalizer of $C$ or $N = \PGL(\bF_q[Z])$.

\smallskip

If $N$ is contained in the normalizer of $C$, a dihedral group, then in $N$ conjugacy classes have size $\leq 2$. On the other hand, the natural generating set of $H$ has size $q+1$ and  is contained in a conjugacy class of $N$, a contradiction.

\smallskip

We conclude that $H$ is a normal subgroup of $\PGL(\bF_q[Z])$. This leaves $H = \PGL(\bF_q[Z])$ or $H = \PSL(\bF_q[Z])$ because $\PSL(\bF_q[Z])$ is simple if $q > 3$ (and if $q=3$ by inspection of the normal subgroups of $\PGL_2(\bF_3) \simeq S_4$ with at least $4 = q+1$ non-zero elements).  

\smallskip

In order to distinguish between $\PGL$ and $\PSL$ we simply have to evaluate the homomorphism
\[
\ov{\det} : \PGL(\bF_q[Z])  \surj \bF_q^\ast/(\bF_q^\ast)^2
\]
induced by the determinant. This was done in Proposition~\ref{prop:localperm} which completes the proof.
\end{proof}

\begin{rmk}
(1)
Proposition~\ref{prop:determinelocalstructure} shows that the local permutation groups of the defining VH-structure in $\Gamma$ are $2$-transitive, hence quasi-primitive, see \cite{burger-mozes:lattices}~\S0.1.

(2) 
All four possibilities that Proposition~\ref{prop:determinelocalstructure} gives for the pair $(P_A,P_B)$ actually occur for suitable parameter $q$ and $\tau$.
\end{rmk}
\subsection{Some abelian quotients}  Here are two finite abelian quotients of $\Gamma$ that occur uniformly thorugh our series of examples.

\begin{prop} \label{prop:uniformabelianquot}
The group $\Gamma$ has the following finite abelian quotients.
\begin{enumera}
\item The assignment $a_\xi  \mapsto \xi$ and $b_\eta  \mapsto \eta$ defines a surjective map
\[
\Gamma \surj \bF_p[Z].
\]
\item \label{itemprop:homotoZmod2Zmod2}
The assignment $a_\xi  \mapsto  \genfrac{(}{)}{0pt}{}{1}{0}$ and $b_\eta  \mapsto  \genfrac{(}{)}{0pt}{}{0}{1}$ defines a surjective map
\[
\Gamma \surj \bF_2  \oplus \bF_2.
\]
\end{enumera}
\end{prop}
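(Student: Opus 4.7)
My plan is to use the explicit finite presentation of $\Gamma_\tau$ from Theorem~\ref{thm:presentationgamma} and check, in each case, that the proposed assignment sends every defining relation to an identity in the abelian target; surjectivity will then follow by inspection.

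Since both target groups are abelian, the four-term relations of $\Gamma_\tau$ translate to additive equations. For part (1) the involution relations $a_\xi a_{-\xi} = 1$ and $b_\eta b_{-\eta} = 1$ become $\xi + (-\xi) = 0$ and $\eta + (-\eta) = 0$, both automatic. The critical four-term relation $a_\xi b_\eta = b_\lambda a_\mu$, with $\lambda,\mu$ determined by $\xi,\eta$ as in Theorem~\ref{thm:presentationgamma}, translates to $\xi + \eta = \lambda + \mu$; this is precisely the second equation of the system \eqref{eq:comparecoefficientsABequalsBA}, which was derived during the proof of Proposition~\ref{prop:establishVHstructure} by expanding the quaternionic product $\alpha_\xi \beta_\eta = \beta_\lambda \alpha_\mu$ and comparing coefficients in the basis $1,Z,F,ZF$ of $D$. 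Hence the assignment in (1) extends to a well-defined homomorphism.

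For part (2), all relations are trivially respected in $\bF_2 \oplus \bF_2$: the involution relations reduce to $2 \cdot (1,0) = 0$ and $2 \cdot (0,1) = 0$, while the four-term relation reads $(1,0) + (0,1) = (0,1) + (1,0)$, which holds in any abelian group. Surjectivity is immediate since the standard generators $(1,0)$ and $(0,1)$ of $\bF_2 \oplus \bF_2$ appear directly as images.

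The only step of substance is surjectivity in (1). The image is the $\bF_p$-linear span (equivalently the additive subgroup generated by) $N_c \cup M_\tau \subset \bF_q[Z]$, and one must check this equals $\bF_p[Z]$. I would produce explicit $\bF_p$-independent elements in the image: the canonical element $Z \in N_c$ (see Definition~\ref{defi:lattice}) is already in the image, and by exploiting the transitive conjugation action of the cyclic group $\bF_q[Z]^\ast/\bF_q^\ast$ on both $N_c$ and $M_\tau$ established in Proposition~\ref{prop:sactsAB}, together with the difference operation inside these conics, sufficiently many $\bF_p$-independent vectors arise to fill $\bF_p[Z]$ additively. This is the mildest point of the proof but the only one requiring more than a rewriting.
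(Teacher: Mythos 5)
Your check of the relations is exactly the paper's argument: the involution relations are immediate, and the four-term relations respect the assignment in (1) precisely because of the second equation $\xi+\eta=\lambda+\mu$ of \eqref{eq:comparecoefficientsABequalsBA}, while (2) is the same parity count; so on everything the paper actually proves you coincide with it (the paper, too, only verifies well-definedness and treats surjectivity as evident).

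The one place you go beyond the paper, surjectivity in (1), you correctly single out as the only substantive point but leave as an unexecuted sketch (``sufficiently many $\bF_p$-independent vectors arise''). It can be closed in one line, and more cleanly than by exhibiting independent vectors: the image is the additive span of $N_c\cup M_\tau$, and $N_c$ is a single orbit under multiplication by the norm-one group $\bT(\bF_q)$, since $\xi\mapsto\xi\wp(\lambda)$ acts transitively on $N_c$ (Proposition~\ref{prop:sactsAB}, Lemma~\ref{lem:hilb90}). Hence the span is stable under multiplication by $\bT(\bF_q)$ and is therefore a module over the $\bF_p$-subalgebra of $\bF_q[Z]$ generated by $\bT(\bF_q)$; a generator of $\bT(\bF_q)$ has multiplicative order $q+1$, and no proper subfield $\bF_{p^m}\subsetneq\bF_{q^2}$ (so $m\mid 2r$, $m<2r$, $q=p^r$) satisfies $q+1\mid p^m-1$, so that subalgebra is all of $\bF_q[Z]$ and the span is a nonzero $\bF_q[Z]$-subspace of $\bF_q[Z]$, i.e.\ everything. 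Note this argument yields surjectivity onto the full additive group $\bF_q[Z]$, which is the form the paper actually invokes later (proof of Theorem~\ref{thm:ncfakequadric}); the target written as $\bF_p[Z]$ in the statement should be read in that light, since for $q\neq p$ the elements $\xi\in N_c$ need not even lie in the ring generated by $Z$ over $\bF_p$.
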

\begin{proof}
(1) 
We need to check the relations of $\Gamma$. First  we check
\[
a_\xi a_{-\xi} \mapsto  \xi - \xi =  0,
\]
\[
b_\eta b_{-\eta} \mapsto \eta - \eta = 0.
\]
Next, if $a_\xi b_\eta = b_\lambda a_\mu$, then $\xi,\eta,\lambda,\mu$ are linked in particular by the second equation of  \eqref{eq:comparecoefficientsABequalsBA}:
\[
\xi + \eta = \lambda +\mu,
\]
so the words $a_\xi b_\eta$ and $b_\lambda a_\mu$ are mapped to the same element. This finishes the proof of (1).

\smallskip

(2) Checking the relations for (2) is obvious. The resulting homomorphism counts the parity of the number of generators $a_\xi$ and the parity of the number of generators $b_\eta$ occurring in a word representing an element of $\Gamma$. This is clearly well defined.
\end{proof}

\begin{rmk}
(1)
By Proposition~\ref{prop:uniformabelianquot}, the maximal abelian quotient of $\Gamma$ has a quotient isomorphic to 
$\bZ/2p\bZ \times \bZ/2p\bZ$ and, at least experimentally, is at most by a factor of $2$ larger. The additional factor $2$, if it occurs at all, comes from either an extra factor or by doubling a factor of the above quotient.

(2) The homomorphism of Proposition~\ref{prop:uniformabelianquot}~\ref{itemprop:homotoZmod2Zmod2} can also be described as the reduced norm modulo squares 
\[
\Nrd : \Gamma \surj \langle t, t\frac{t-\tau}{1-\tau}  \rangle \subset R'^\ast/(R'^\ast)^2,
\]
see \eqref{eq:normab}, 
the kernel of which describes the maximal subgroup $\Gamma^1 \subset \Gamma$ that lifts to the universal cover $\SL_{1,D} \to \PGL_{1,D}$.
\end{rmk}

\subsection{Residually pro-$p$}
As an arithmetic lattice in characteristic $p$ the group $\Gamma$ is virtually residually pro-$p$. In fact, $\Gamma$ itself is residually pro-$p$ already.

\begin{prop} \label{prop:resprop}
The group $\Gamma$ is residually pro-$p$.
\end{prop}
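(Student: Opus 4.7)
The plan is to exhibit an injective homomorphism from $\Gamma_\tau$ into a compact pro-$p$ group, from which residual pro-$p$-ness follows immediately because every pro-$p$ group is residually a finite $p$-group.

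The natural candidate is the completion at the ramified place $v = 1$. I would set $s = t-1$, so $K_1 = \bF_q((s))$ and $\fo_1 = \bF_q[[s]]$. Since $D$ ramifies at $v = 1$, the scalar extension $D_1 = D \otimes_K K_1$ is the unique quaternion division algebra over $K_1$. Because $F^2 = t(t-1) = (1+s)s$ has odd $s$-valuation, the element $F$ serves as a uniformizer of the unique maximal order
\[
\fO_1 = \fo_1 \oplus \fo_1 Z \oplus \fo_1 F \oplus \fo_1 ZF,
\]
with residue ring $\fO_1/F\fO_1 \simeq \bF_{q^2}$. The map $G(K) = D^\ast/K^\ast \to D_1^\ast/K_1^\ast = G(K_1)$ is injective because $D \cap K_1 = K$ inside $D_1$, so the natural inclusion $\Gamma_\tau \hookrightarrow G(K_1)$ is injective.

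The crucial step is to reduce the generators modulo $F\fO_1$. For $\xi \in N_c$ one computes
\[
\alpha_\xi = (tZ + \xi F)Z = tc - \xi ZF \ \equiv \ c \pmod{F\fO_1},
\]
using that $s = F^2(1+s)^{-1} \in F\fO_1$, and the analogous computation yields $\beta_\eta \equiv c \pmod{F\fO_1}$ for $\eta \in M_\tau$. Consequently the image of every $a_\xi$ and every $b_\eta$ in $G(K_1)$ lies in the subgroup
\[
H := (1 + F\fO_1)\, K_1^\ast / K_1^\ast \ \simeq \ (1 + F\fO_1)\big/\bigl((1+F\fO_1) \cap K_1^\ast\bigr).
\]
The numerator $1 + F\fO_1$ is a standard pro-$p$ subgroup of $\fO_1^\ast$, filtered by $1 + F^n \fO_1$ with $\bF_{q^2}$-vector space subquotients. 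A direct computation in the basis $1,Z,F,ZF$ gives $F\fO_1 \cap K_1 = s(1+s)\fo_1 = s\fo_1$, so the denominator is $1 + s\fo_1$, also pro-$p$. Hence $H$ is pro-$p$, and the injection $\Gamma_\tau \hookrightarrow G(K_1)$ factors through $H$, which completes the proof.

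The main obstacle to address is the careful local arithmetic at the ramified place $v = 1$: identifying $\fO_1$ as the unique maximal order, verifying that $F\fO_1$ is its unique maximal two-sided ideal, and correctly reducing the generators modulo $F\fO_1$ with proper bookkeeping for the anticommutation $FZ = -ZF$. Once these pieces are in place, everything else reduces to the well-known residual-$p$ structure of standard principal congruence subgroups in local division algebras in equal characteristic $p$.
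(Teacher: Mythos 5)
Your proposal is correct and is essentially the paper's own argument: both complete at the ramified place $t=1$, use the $F$-adic filtration of the maximal order $\fo_1 \oplus \fo_1 Z \oplus \fo_1 F \oplus \fo_1 ZF$, observe that every generator $a_\xi$, $b_\eta$ is congruent to the central constant $c$ modulo $F$, and conclude that $\Gamma$ embeds in the pro-$p$ group of principal units modulo the center. The only difference is packaging — you phrase it as $\Gamma \hookrightarrow (1+F\fO_1)K_1^\ast/K_1^\ast$, while the paper takes the kernel of the reduction $\hat{\fO}'^\ast/\bF_q[[t-1]]^\ast \surj \bF_q[Z]^\ast/\bF_q^\ast$ — which is the same subgroup.
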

\begin{proof}
We complete at $t = 1$ and consider the $\bF_q[[t-1]]$-order in the skew-field $D \otimes_K \bF_q((t-1))$
\[
\hat{\fO}'  = \fO' \otimes_{R'} \bF_q[[t-1]].
\]
Since $F^2 = t(t-1)$, the two-sided ideals $(F^n) \subseteq \hat{\fO}'$ form an exhausting $(t-1)$-adic filtration of 
$\hat{\fO}'$ with successive quotients isomorphic as $\hat{\fO}'$-module to $\hat{\fO}'/(F)$ which itself is isomorphic to
\[
\hat{\fO}'/(F) \xrightarrow{\sim} \bF_q[Z]
\]
via $Z \mapsto Z$ and $F \mapsto 0$. It follows that the kernel of the modulo $F$ reduction map 
\[
\ker \big(\hat{\fO}'^\ast  \surj \bF_q[Z]^\ast\big)
\]
is a pro-$p$ group. An easy snake lemma shows that also 
\[
\ker \big(\hat{\fO}'^\ast /\bF_q[[t-1]]^\ast \surj \bF_q[Z]^\ast/\bF_q^\ast\big)
\]
is a pro-$p$ group. We furthermore know that 
\[
\Gamma \subset \Lambda'   = G(R') \subseteq \hat{\fO}'^\ast/\bF_q[[t-1]]^\ast
\]
so that it remains to show the vanishing of  the composite
\[
\Gamma \to \Lambda' \to \hat{\fO}'^\ast/\bF_q[[t-1]]^\ast \to  \bF_q[Z]^\ast/\bF_q^\ast.
\] 
This follows form 
\begin{eqnarray*}
a_\xi = (tZ + \xi F)Z & \mapsto & Z^2 = c, \\
b_\eta = (tZ + \eta F)Z & \mapsto & Z^2 = c
\end{eqnarray*}
which completes the proof.
\end{proof}

\section{Classification --- consequences of rigidity} \label{sec:rigidity}
In this section we spell out the consequences of Margulis rigidity theory as in \cite{margulis:book} for the groups $\Gamma_\tau$ of Section~\S\ref{sec:lattice}. By Corollary~\ref{cor:transitiveaction}~\ref{coritem:arithmeticlattice} and 
Theorem~\ref{thm:presentationgamma} the $\Gamma_\tau$ are torsion free arithmetic lattices for the group $G=\PGL_{1,D}$ with an explicit finite presentation. As an immediate consequence of strong approximation (for $\SL_2$) the lattices $\Gamma_\tau$ are irreducible.

\subsection{Commensurability classification}
Recall that two groups are commensurable if they share isomorphic finite index subgroups.

\begin{prop} \label{prop:commensurabilityclassification}
The lattice $\Gamma_\tau$ is commensurable with $\Gamma_{\secondtau}$ if and only if $\secondtau$ is in the $\Gal(\bar \bF_p/\bF_p)$-orbit of $\tau$ or $1-\tau$, i.e., if there is an $k \in \bZ$ such that $\secondtau = \tau^{p^k}$ or $\secondtau = 1 - \tau^{p^k}$.
\end{prop}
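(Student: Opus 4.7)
\begin{pro*}[Proof sketch]
The plan is to handle the two directions separately.

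For sufficiency, I construct explicit isomorphisms by acting on the arithmetic setup with Galois field automorphisms. A power $\Frob^k \in \Gal(\bar \bF_p/\bF_p)$ acts on coefficients of $\bF_q(t)$ while fixing $\bF_p(t)$. Since $D$ is determined up to isomorphism over $\bF_q(t)$ by its ramification set $\{0,1\}$, which is Frobenius-invariant, $D$ descends to $\bF_p(t)$, so $\Frob^k$ lifts to an $\bF_p(t)$-linear automorphism of $D$ carrying the order $\fO'$ attached to $\tau$ to that attached to $\tau^{p^k}$, and hence realizes an isomorphism $\Gamma_\tau \xrightarrow{\sim} \Gamma_{\tau^{p^k}}$. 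To handle $\tau' = 1-\tau^{p^k}$, I compose additionally with the $\bF_q$-linear M\"obius transformation $t \mapsto 1-t$ of $\bF_q(t)$, which swaps the ramification places $0$ and $1$ of $D$ (preserving $D$ up to isomorphism), fixes $\infty$, and sends $\tau$ to $1-\tau$.

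For necessity, the plan is to invoke the commensurator and super-rigidity theorems of Margulis for irreducible arithmetic lattices in higher-rank semisimple groups. Since $\Gamma_\tau$ is an irreducible arithmetic lattice in the rank-$2$ product $G(K_\tau) \times G(K_\infty)$ by Corollary~\ref{cor:transitiveaction}\ref{coritem:arithmeticlattice} and the strong approximation remark opening this section, any abstract commensurability $\Gamma_\tau \sim \Gamma_{\tau'}$ extends to an isomorphism of the ambient algebraic data. Concretely, there must exist a field isomorphism $\phi\colon \bF_q(t) \xrightarrow{\sim} \bF_q(t)$ (the constant field $\bF_q$ being recovered from the common tree valency $q+1$) and a compatible $\phi$-semilinear algebra automorphism of $D$, such that $\phi$ preserves the ramification locus $\{0,1\}$ of $D$ setwise and sends the set of non-compact split places $\{\tau,\infty\}$ for $\Gamma_\tau$ to $\{\tau',\infty\}$ for $\Gamma_{\tau'}$.

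Every such $\phi$ decomposes as a $\bF_q$-linear M\"obius transformation $t \mapsto (at+b)/(ct+d)$ composed with a power of the coefficient Frobenius; the Frobenius contribution accounts for the Galois orbit, so it remains only to analyze the M\"obius part. The two setwise preservation conditions leave only a handful of cases, distinguished by whether $\infty$ is fixed or swapped with $\tau$ and by the permutation of $\{0,1\}$; a direct computation in each case forces $\tau' \in \{\tau, 1-\tau\}$, completing the proof. The main obstacle is the rigidity input: once one has the Margulis theorems (together with the identification of $G(K)$ as the commensurator of $\Gamma_\tau$ inside $G(K_\tau) \times G(K_\infty)$, via strong approximation for $\SL_{1,D}$), the remaining field-automorphism analysis is elementary.
\end{pro*}
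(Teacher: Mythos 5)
Your proposal is correct and follows essentially the same route as the paper: Margulis superrigidity (Chapter VIII of \cite{margulis:book}) applied in both directions to produce a semilinear automorphism of $\bP^1_{\bF_q}$ preserving the ramification set $\{0,1\}$ and carrying $\{\secondtau,\infty\}$ to $\{\tau,\infty\}$, plus explicit Frobenius and M\"obius ($t \mapsto 1-t$) constructions for sufficiency. The only cosmetic difference is that the paper packages your final case analysis of M\"obius transformations via the cross-ratio $\DV(0,1;\tau,\infty)=\tau/(\tau-1)$ and the Klein four group acting through $\lambda \mapsto 1/\lambda$, which yields the same conclusion $\secondtau \in \{\tau, 1-\tau\}$ up to Frobenius twist.
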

\begin{proof}
We will decorate all notation with a hat when it refers to $\Gamma_{\secondtau}$. 
Let us start with a finite index subgroup $\Gamma_0 \subset \Gamma_\tau$ and a nontrivial homomorphism 
$\ph: \Gamma_0 \to \Gamma_{\secondtau}$. By \cite{margulis:book} Chapter VIII Theorem C the homomorphism gives rise to an embedding of fields 
\[
\sigma : K \inj \secondK
\]
and a homomorphism 
\[
f: \PGL_{1,D} \otimes_{K,\sigma} \secondK \to \PGL_{1,\secondD}
\]
such that the following commutes:
\[
\xymatrix@M+1ex@R-2ex{
\PGL_{1,D}(K) \ar[r]^{f(\secondK)} & \PGL_{1,\secondD}(\secondK) \\
\Gamma_0 \ar[r]^\ph \ar@{}[u]|{\displaystyle \cup} & \Gamma_{\secondtau}. \ar@{}[u]|{\displaystyle \cup}
}
\]
(Note that a central twist is necessarily trivial because $\PGL_{1,D}$ has trivial center.)  

Now we assume that $\ph$ is an isomorphism onto a finite index subgroup $\secondGamma_0 \subseteq \Gamma_{\secondtau}$, so that the inverse $\ph^{-1} : \secondGamma_0 \to \Gamma_\tau$ gives rise to the inverse embedding 
\[
\sigma^{-1} : \secondK \inj K
\]
Composing $\ph$ with $\ph^{-1}$ yields the identity, so that by uniqueness the maps $\sigma$ and $\sigma^{-1}$ must be inverses of each other. The map $\sigma$ induces an automorphism $\Frob^k$ on the field of constants $\bF_q \subset K,\secondK$, and $\sigma$ itself then encodes a semi-linear automorphism $\sigma : \bP^1_{\bF_q} \to \bP^1_{\bF_q}$, that maps 
\[
\secondS = \{0,1,\secondtau,\infty\}
\]
to $S = \{0,1,\tau,\infty\}$. Changing the $\bF_q$ structure on one copy of $\bP^1$ by a suitable power of Frobenius we obtain a linear automorphism of $\bP^1_{\bF_q}$  that maps $\secondS$ to $\{0,1,\tau^{p^k},\infty\}$ and, since furthermore the places $0,1$ encode the isomorphism type of $D$, hence $\PGL_{1,D}$,  must also preserve $\{0,1\}$ as a set. It follows that up to applying a suitable power of Frobenius, the commensurability class of $\Gamma_\tau$ determines a rational partitioned $4$-tuple 
\[
((r_1,r_2);(t_1,t_2))
\]
of distinct elements of $\bP^1(\bF_q)$
up to permutations that preserve the partition in $(r_1,r_2)$ and $(t_1,t_2)$. The moduli problem of such structured $4$-tuples is covered by $\cM_{0,4}$, the moduli space of genus $0$ curves with an ordered set of $4$ distinct points, and via the double ratio
\[
\lambda = \DV(r_1,r_2;t_1,t_2) = \frac{r_1-t_1}{r_2-t_1} : \frac{r_1-t_2}{r_2-t_2} 
\]
isomorphic to $\bP^1 - \{0,1,\infty\}$. The remaining freedom is a Klein-four-group generated by transpositions $r_1 \leftrightarrow r_2$ and $t_1 \leftrightarrow t_2$. This group acts on the double ratio $\lambda$ through the single involution 
\[
\lambda \mapsto 1/\lambda.
\]
Because of 
\[
\DV(0,1;\tau,\infty) = \frac{\tau}{\tau-1}
\]
the effect on $\tau$ is therefore 
\[
\tau \mapsto 1 - \tau.
\]
This shows that commensurable groups $\Gamma_\tau$ and $\Gamma_{\secondtau}$ have their parameters $\tau$ and 
$\secondtau$ linked as predicted by the proposition. The converse is clear by the discussion above.
\end{proof}

\subsection{Isomorphism classification} We establish two explicit isomorphisms of lattices thereby showing that for the 
$\Gamma_\tau$ the commensurability classification agrees with the a priori finer classification up to isomorphism.

\begin{prop} \label{prop:1minustau}
There is an isomorphism
\[
\Gamma_\tau \simeq \Gamma_{1-\tau}
\]
coming from an isomorphism of the defining presentations.
\end{prop}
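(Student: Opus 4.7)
The plan is to construct the isomorphism explicitly by giving a bijection on generators that respects the defining relations of Theorem~\ref{thm:presentationgamma}. The geometric intuition from the moduli analysis in the proof of Proposition~\ref{prop:commensurabilityclassification} is that $\tau \leftrightarrow 1-\tau$ corresponds to interchanging either the two ramified places $\{0,1\}$ or the two unramified places $\{\tau,\infty\}$; on the associated building this swaps the two tree factors, so the isomorphism should interchange the sets $A_\tau$ and $B_\tau$.

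Concretely, I would define $\phi\colon \Gamma_\tau \to \Gamma_{1-\tau}$ on generators by
\[
\phi(a_\xi) = \hat b_{\zeta\xi}, \qquad \phi(b_\eta) = \hat a_{\zeta\eta},
\]
where hats denote the generators of $\Gamma_{1-\tau}$. Well-definedness on indexing sets follows from $N(\zeta) = (\tau-1)/\tau$: one computes $N(\zeta\xi) = c(1-\tau)/\tau$, which places $\zeta\xi$ in the index set $\hat M_{1-\tau}$ of horizontal generators of $\Gamma_{1-\tau}$, and $N(\zeta\eta) = -c$, which places $\zeta\eta$ in $\hat N_c = N_c$. Hence multiplication by $\zeta$ realises the bijections $N_c \cong \hat M_{1-\tau}$ and $M_\tau \cong \hat N_c$ needed for $\phi$ to act on generating sets.

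To check the relations, the involutive pairs $a_\xi a_{-\xi} = 1$ and $b_\eta b_{-\eta} = 1$ transport immediately. For a length-four relation $a_\xi b_\eta = b_\lambda a_\mu$, its image $\hat b_{\zeta\xi}\hat a_{\zeta\eta} = \hat a_{\zeta\lambda}\hat b_{\zeta\mu}$ rearranges, using the $\hat a$-involutions, to the standard form
\[
\hat a_{-\zeta\lambda}\hat b_{\zeta\xi} = \hat b_{\zeta\mu}\hat a_{-\zeta\eta}.
\]
I would verify this is a defining relation of $\Gamma_{1-\tau}$ by working with the equivalent packaging derived in the proof of Proposition~\ref{prop:establishVHstructure}, namely the pair of identities $\xi+\eta = \lambda+\mu$ and $\xi\bar\eta = \lambda\bar\mu$. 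The additive identity propagates after dividing by $\zeta$. The multiplicative identity for the new parameters $(-\zeta\lambda, \zeta\xi, \zeta\mu, -\zeta\eta)$ reduces, after cancelling the nonzero scalar $N(\zeta)$, to $\lambda\bar\xi = \mu\bar\eta$, which is equivalent to the original $\xi\bar\eta = \lambda\bar\mu$ under the norm constraint $N(\xi) = N(\mu) = -c$: multiply the original by $\bar\xi$ and use $\xi\bar\xi = \mu\bar\mu$ to cancel $\bar\mu$.

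The main subtle step I expect is this symmetric rewriting of the multiplicative condition (together with the sign bookkeeping in the rearrangement step). To conclude that $\phi$ is an isomorphism, I would construct the explicit inverse $\hat a_{\xi'} \mapsto b_{\xi'/\zeta}$ and $\hat b_{\eta'} \mapsto a_{\eta'/\zeta}$, which is well defined because $\hat\zeta := 1/\zeta$ satisfies $N(\hat\zeta) = \tau/(\tau-1)$, the norm condition required for $\Gamma_{1-\tau}$; the same calculation verifies it preserves relations, and composing with $\phi$ gives the identity on generators, yielding the isomorphism of the defining presentations.
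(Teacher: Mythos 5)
Your proposal is correct and matches the paper's proof in its essential content: the paper uses exactly the same map (multiply the index sets by the fixed element $\zeta$ of norm $(\tau-1)/\tau$, interchanging the roles of $A$ and $B$), verifying compatibility via the local permutation structure and multiplicativity of $\wp$, which is just a repackaging of your direct check of the two conditions $\xi+\eta=\lambda+\mu$ and $\xi\bar\eta=\lambda\bar\mu$ from \eqref{eq:comparecoefficientsABequalsBA}. Your sign bookkeeping and the equivalence $\lambda\bar\xi=\mu\bar\eta \Leftrightarrow \xi\bar\eta=\lambda\bar\mu$ under $N(\xi)=N(\mu)=-c$ are both correct, so no gap.
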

\begin{proof}
The isomorphism will interchange vertical with horizontal edges. We thus have to find maps
\[
A_\tau = \bT_{-c}(\bF_q)  \xrightarrow{\ph} \bT_{\frac{c(1-\tau)}{\tau}}(\bF_q) = B_{1-\tau}
\]
\[
B_\tau = \bT_{\frac{c\tau}{1-\tau}}(\bF_q)  \xrightarrow{\psi} \bT_{-c}(\bF_q) = A_{\tau}
\]
such that $(\ph,\psi)$ preserves the local permutation structure, i.e., the following diagrams commute:
\[
\xymatrix@M+1ex{
A_\tau \ar[d]_{\sigma^{B_\tau}} \ar[r]^\ph & B_{1-\tau} \ar[d]^{\sigma^{A_{1-\tau}}} \\
\Sym(B_\tau) \ar[r]^{\psi(-)\psi^{-1}} & \Sym(A_{1-\tau})
}
\qquad
\xymatrix@M+1ex{
B_\tau \ar[d]_{\sigma^{A_\tau}} \ar[r]^\psi & A_{1-\tau} \ar[d]^{\sigma^{B_{1-\tau}}} \\
\Sym(A_\tau) \ar[r]^{\ph(-)\ph^{-1}} & \Sym(B_{1-\tau})
}
\]
It follows that the map of generators
\[
A_\tau \sqcup B_\tau \xrightarrow{\ph \sqcup \psi} B_{1-\tau} \sqcup A_{1-\tau}
\]
induces an isomorphism $\Gamma_\tau \simeq  \Gamma_{1-\tau}$.

\smallskip

In view of Proposition~\ref{prop:localperm} the maps $\ph$ and $\psi$ must satisfy for all $\xi \in A_\tau$ and $\eta \in B_\tau$
\begin{eqnarray} \label{eq:conditionisotau1minustau}
\psi(\bar{\eta} \cdot \wp(\eta - \xi)) & = & \ov{\psi(\eta)} \cdot \wp(\psi(\eta) - \ph(\xi)), \\
\ph(\bar{\xi} \cdot \wp(\xi - \eta)) & = & \ov{\ph(\xi)} \cdot \wp(\ph(\xi) - \psi(\eta)). \notag
\end{eqnarray}
Let $\zeta \in \bF_q[Z]^\ast$ be an element of norm 
\[
N(\zeta) = \frac{\tau-1}{\tau}
\]
which exists by Lemma~\ref{lem:hilb90}.
We make the ansatz
\[
\ph(\xi) = \zeta \cdot \xi \quad \text{ and } \quad \psi(\eta) = \zeta \cdot \eta
\]
which indeed defines well defined maps between the respective sets. The trivial verification of \eqref{eq:conditionisotau1minustau} for these choices of $\ph$ and $\psi$ based on the multiplicativity of $\wp$ completes the proof.
\end{proof}

\begin{prop} \label{prop:taupowerp}
There is an isomorphism
\[
\Gamma_\tau \simeq \Gamma_{\tau^p}
\]
coming from an isomorphism of the defining presentations.
\end{prop}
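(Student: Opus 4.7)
The plan is to mirror the strategy of Proposition~\ref{prop:1minustau}: construct bijections $\ph\colon A_\tau \to A_{\tau^p}$ and $\psi\colon B_\tau \to B_{\tau^p}$ that intertwine the local permutation structure, and then invoke the presentation from Theorem~\ref{thm:presentationgamma} to promote them to an isomorphism of groups. Since everything in sight is defined over $\bF_q$ of characteristic $p$, the natural candidate is the $p$-th power Frobenius $w \mapsto w^p$ on $\bF_q[Z]$, rescaled by a scalar factor to correct the mismatch in norms.

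Concretely, $c$ being a non-square in $\bF_q^\ast$ forces $c^{1-p}$ to be a square (as $1-p$ is even with $p$ odd), so I would pick $\mu \in \bF_q^\ast$ with $\mu^2 = c^{1-p}$ and set
\[
\ph(\xi) = \mu \cdot \xi^p, \qquad \psi(\eta) = \mu \cdot \eta^p.
\]
The first step is to check that these are well defined bijections between the relevant sets. For $\xi \in A_\tau$ one computes $N(\ph(\xi)) = \mu^2 N(\xi)^p = c^{1-p}(-c)^p = -c$, using $p$ odd, so $\ph(\xi) \in A_{\tau^p}$. For $\eta \in B_\tau$,
\[
N(\psi(\eta)) = c^{1-p}\bigl(c\tau/(1-\tau)\bigr)^p = c\tau^p/(1-\tau)^p = c\tau^p/(1-\tau^p),
\]
invoking $(1-\tau)^p = 1-\tau^p$ in characteristic $p$, so $\psi(\eta) \in B_{\tau^p}$. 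The orientation-reversing involutions are respected since $p$ is odd, giving $\ph(-\xi) = -\ph(\xi)$ and $\psi(-\eta) = -\psi(\eta)$.

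The main step is to verify that $(\ph,\psi)$ preserves the local permutation structure of Proposition~\ref{prop:localperm}, that is, the identity
\[
\psi(\bar\eta \cdot \wp(\eta - \xi)) = \ov{\psi(\eta)} \cdot \wp(\psi(\eta) - \ph(\xi))
\]
together with its symmetric counterpart. Since Frobenius commutes with Galois conjugation and $\wp$ is multiplicative, the left-hand side equals $\mu \bar\eta^{p} \wp(\eta^p - \xi^p)$, while the right-hand side expands to $\mu \bar\eta^p \cdot \wp(\mu) \cdot \wp(\eta^p - \xi^p)$. These agree precisely because $\mu \in \bF_q^\ast$ gives $\wp(\mu) = \mu/\bar\mu = 1$.

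With the local structure intertwined, Theorem~\ref{thm:presentationgamma} produces a group homomorphism $\Gamma_\tau \to \Gamma_{\tau^p}$ sending $a_\xi \mapsto a_{\ph(\xi)}$ and $b_\eta \mapsto b_{\psi(\eta)}$; since $\ph$ and $\psi$ are bijections on the generating sets and the length-four relations are transported to the defining relations of $\Gamma_{\tau^p}$, this homomorphism is an isomorphism. The principal obstacle, and the reason this proof requires slightly more care than Proposition~\ref{prop:1minustau}, is arranging for the scaling factor $\mu$ to lie in $\bF_q^\ast$ rather than merely in $\bF_q[Z]^\ast$; without this, the factor $\wp(\mu)$ in the computation above would not equal $1$ and the intertwining would fail.
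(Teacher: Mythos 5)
Your proposal is correct and is essentially the paper's own proof: the paper takes $\ph(\xi)=\xi^p/c^{(p-1)/2}$ and $\psi(\eta)=\eta^p/c^{(p-1)/2}$, i.e.\ exactly your Frobenius composed with the scalar $\mu=c^{(1-p)/2}\in\bF_q^\ast$ satisfying $\mu^2=c^{1-p}$, and verifies the same intertwining identities via $\wp((-)^p)=\wp(-)^p$. (Only cosmetic remark: the existence of $\mu$ needs nothing about $c$ being a non-square, just that $1-p$ is even.)
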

\begin{proof}

We argue as in Proposition~\ref{prop:1minustau} that there are maps
\[
A_\tau = \bT_{-c}(\bF_q)  \xrightarrow{\ph}  \bT_{-c}(\bF_q) =  A_{\tau^p}
\]
\[
B_\tau = \bT_{\frac{c\tau}{1-\tau}}(\bF_q)  \xrightarrow{\psi} \bT_{\frac{c\tau^p}{1-\tau^p}}(\bF_q) = B_{\tau^p}
\]
such that $(\ph,\psi)$ preserves the local permutation structure. Here this amounts for all $\xi \in A_\tau$ and $\eta \in B_\tau$ to 
\begin{eqnarray} \label{eq:conditionisotaupowerp}
\psi(\bar{\eta} \cdot \wp(\eta - \xi)) & = & \ov{\psi(\eta)} \cdot \wp(\psi(\eta) - \ph(\xi)), \\
\ph(\bar{\xi} \cdot \wp(\xi - \eta)) & = & \ov{\ph(\xi)} \cdot \wp(\ph(\xi) - \psi(\eta)). \notag
\end{eqnarray}
Define $\ph$ and $\psi$ as
\begin{eqnarray*}
\ph(\xi) & = & \frac{\xi^p}{c^{(p-1)/2}}, \\
\psi(\eta) & = & \frac{\eta^p}{c^{(p-1)/2}}
\end{eqnarray*}
which indeed defines well defined maps between the respective sets. Again, the trivial verification of 
\eqref{eq:conditionisotaupowerp} for these choices of $\ph$ and $\psi$ based on $\wp((-)^p) = \wp(-)^p$ completes the proof.
\end{proof}

\begin{cor} \label{cor:isomclass}
Lattices of the form $\Gamma_\tau$ are commensurable if and only if they are isomorphic. They are isomorphic if and only if suitable  Galois conjugates of the parameters $\tau$ agree or add up to $1$.
\end{cor}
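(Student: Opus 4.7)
The plan is to deduce the corollary by combining the commensurability classification of Proposition~\ref{prop:commensurabilityclassification} with the two explicit isomorphisms constructed in Propositions~\ref{prop:1minustau} and \ref{prop:taupowerp}. The direction ``isomorphic implies commensurable'' is trivial, since isomorphic groups share themselves as a common finite-index subgroup. Once the converse is established, the isomorphism classification asserted in the second half of the corollary is nothing but a restatement of the commensurability classification already proved.

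For the nontrivial direction, I would start from a commensurability $\Gamma_\tau \sim \Gamma_{\secondtau}$. By Proposition~\ref{prop:commensurabilityclassification}, there exists $k \in \bZ$ with either $\secondtau = \tau^{p^k}$ or $\secondtau = 1 - \tau^{p^k}$. Since $\bF_q/\bF_p$ is finite, the Frobenius $x \mapsto x^p$ has finite order on $\bF_q$, so there is no loss of generality in taking $k \geq 0$. In the first case I would iterate Proposition~\ref{prop:taupowerp}, obtaining a chain
\[
\Gamma_\tau \xrightarrow{\sim} \Gamma_{\tau^p} \xrightarrow{\sim} \Gamma_{\tau^{p^2}} \xrightarrow{\sim} \cdots \xrightarrow{\sim} \Gamma_{\tau^{p^k}} = \Gamma_{\secondtau}.
\]
Each link in the chain is legitimate because the parameter stays in $\bF_q^\ast \setminus \{1\}$ (Frobenius is a bijection on $\bF_q$ and fixes $1$), so the hypotheses of Proposition~\ref{prop:taupowerp} are satisfied at each stage. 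In the second case I proceed identically to reach $\Gamma_{\tau^{p^k}}$ and then apply Proposition~\ref{prop:1minustau} to close with the isomorphism $\Gamma_{\tau^{p^k}} \simeq \Gamma_{1 - \tau^{p^k}} = \Gamma_{\secondtau}$.

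Having shown both implications of ``commensurable iff isomorphic'', the second claim of the corollary follows by reading Proposition~\ref{prop:commensurabilityclassification} through this equivalence: two lattices $\Gamma_\tau$ and $\Gamma_{\secondtau}$ are isomorphic precisely when $\secondtau$ lies in the $\Gal(\bar{\bF}_p/\bF_p)$-orbit of $\tau$ or of $1-\tau$, which is exactly the statement that suitable Galois conjugates of $\tau$ and $\secondtau$ agree or add up to $1$. There is no genuine obstacle in this proof; all the substantive work has already been carried out in the three preceding results, and the corollary is essentially a matter of chaining together the explicit isomorphisms they supply.
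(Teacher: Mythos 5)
Your proof is correct and follows the paper's own route: the paper likewise deduces the corollary directly from Propositions~\ref{prop:commensurabilityclassification}, \ref{prop:1minustau} and \ref{prop:taupowerp}, and your write-up simply makes explicit the chaining of the Frobenius isomorphisms (with the harmless reduction to $k \geq 0$) that the paper leaves implicit.
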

\begin{proof}
This follows immediately from Proposition~\ref{prop:1minustau}, Proposition~\ref{prop:taupowerp} and 
Proposition~\ref{prop:commensurabilityclassification}.
\end{proof}

\subsection{Rank and finiteness properties}

The lattice $\Gamma_\tau$ is an $S$-arithmetic lattice for the adjoint group $G = \PGL_{1,D}$ over $K = \bF_q(t)$ and 
\[
S = \{0,1,\tau,\infty\},
\]
hence an $S$-arithmetic lattice in $\prod_{v \in S} G(K_v)$ of rank 
\[
\sum_{v \in S} \text{rank}_v (G) = 2.
\]
Indeed, the ramified places of $D$ have anisotropic $G(K_v)$ and the local factor at the other two places are isomorphic to  $\PGL_2\big(\bF_q((t))\big)$ and thus each contribute rank $1$. 

\smallskip

Recall that a group is called FAB if all finite index subgroups $H$ have finite maximal abelian quotient $H^\ab$.
 
\begin{prop} \label{prop:justinfinite}
The non-trivial normal subgroups of $\Gamma_\tau$ are of finite index. In particular the following holds.
\begin{enumera}
\item \label{propitem:finitemaxab}
The maximal abelian quotient $\Gamma_\tau^\ab$ is finite.
\item \label{propitem:fab}
The group $\Gamma_\tau$ is a FAB group.
\end{enumera}
\end{prop}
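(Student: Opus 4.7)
The plan is to reduce everything to Margulis's Normal Subgroup Theorem (NST) applied to the $S$-arithmetic lattice $\Gamma_\tau$. By Corollary~\ref{cor:transitiveaction}~\ref{coritem:arithmeticlattice} the group $\Gamma_\tau$ has finite index in $G(R)$, and via the diagonal embedding \eqref{eq:arithmeticlattice2factors} it realises as a cocompact irreducible $S$-arithmetic lattice in the semisimple group
\[
G(K_\tau)\times G(K_\infty) \;\simeq\; \PGL_2\bigl(\bF_q((y))\bigr)\times \PGL_2\bigl(\bF_q((z))\bigr),
\]
of total local rank~$2$, with the adjoint structure group $G = \PGL_{1,D}$. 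Irreducibility follows from strong approximation for the simply connected cover $\SL_{1,D}$, as indicated at the start of Section~\S\ref{sec:rigidity}.

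Next I would invoke Margulis's Normal Subgroup Theorem in its $S$-arithmetic form, valid in positive characteristic for higher-rank lattices (cf.\ \cite{margulis:book}): every normal subgroup $N \trianglelefteq \Gamma_\tau$ is either central in the ambient semisimple group or of finite index. Since $G$ is adjoint, the centre is trivial, so every non-trivial normal subgroup of $\Gamma_\tau$ has finite index. This is precisely the main claim of the proposition.

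Assertions \ref{propitem:finitemaxab} and \ref{propitem:fab} then follow by routine commutator considerations. For \ref{propitem:finitemaxab}, the commutator subgroup $[\Gamma_\tau,\Gamma_\tau]$ is a normal subgroup, and it is non-trivial because $\Gamma_\tau$ is non-abelian: the VH-relations $a_\xi b_\eta = b_\lambda a_\mu$ of Proposition~\ref{prop:establishVHstructure}~\ref{propitem:relation4} have $(\lambda,\mu)\neq(\eta,\xi)$ for generic $(\xi,\eta)$, so $[a_\xi,b_\eta]\neq 1$; alternatively one can note that $\Gamma_\tau$ is Zariski-dense in $G$ by Borel density. Hence $[\Gamma_\tau,\Gamma_\tau]$ has finite index and $\Gamma_\tau^\ab$ is finite. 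For \ref{propitem:fab}, let $H\subseteq\Gamma_\tau$ be of finite index. Then $H$ is itself a cocompact irreducible $S$-arithmetic lattice of the same kind, so the main claim applies to $H$ as well; since $\Gamma_\tau$ cannot be virtually abelian (else its Zariski closure in $G$ would be abelian, contradicting Zariski density), neither is $H$, so $[H,H]$ is non-trivial and of finite index, making $H^\ab$ finite.

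The only real obstacle is to justify that Margulis's NST truly applies in the present function-field setting with a ramified quaternion algebra; the two hypotheses to check are irreducibility (covered by strong approximation) and total local $S$-rank $\geq 2$ (equal to~$2$ here, since the two ramified places contribute compact factors and the two split places each contribute rank~$1$), both already established.
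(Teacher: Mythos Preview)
Your proof is correct and follows essentially the same approach as the paper: both invoke Margulis's Normal Subgroup Theorem (\cite{margulis:book} Chapter~VIII Theorem~A) for the rank-$2$ irreducible lattice $\Gamma_\tau$ to obtain the main claim. The only cosmetic differences are that the paper dismisses the central case via torsion-freeness of $\Gamma_\tau$ rather than adjointness of $G$, and cites \cite{margulis:book} Proposition~IX.5.3 directly for \ref{propitem:finitemaxab} and \ref{propitem:fab} rather than deducing them from the main claim via the commutator subgroup as you do.
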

\begin{proof}
The lattice $\Gamma_\tau$ has rank $2$. Therefore, by \cite{margulis:book} Chapter VIII Theorem A, non-trivial normal subgroups are either of finite index or central, hence finite. The proposition follows because $\Gamma_\tau$ is torsion free.
Note that assertion~\ref{propitem:finitemaxab} of the proposition is a special case of  \cite{margulis:book} Proposition~IX.5.3, and assertion~\ref{propitem:fab} follows because the same reference applies also to all finite index subgroups of $\Gamma_\tau$.
\end{proof}

\section{Construction of a non-classical fake quadric} \label{sec:fakequadric}

\subsection{Fake quadrics}
The motivating question that leads to the notion of a \textbf{fake quadric} asks for compact complex K\"ahler manifolds with the same Betti numbers $\rb_1 = 0$ and $\rb_2 = 2$ as $\bP^1 \times \bP^1$ and with geometric genus $p_g = 0$. These surfaces are either Hirzebruch surfaces $\Sigma_n = \bP(\dO \oplus \dO(n)) \to \bP^1$ or algebraic surfaces of general type. In the latter case these surfaces are either the blow-up of a closed point on a fake $\bP^2$ or are called fake quadric if they are minimal. In characteristic $0$ the numerical conditions are equivalent to 
\[
\rc_1^2 = 8, \quad \rc_2 = 4, \quad \text{ and  trivial Albanese variety},
\]
and the assumption of being minimal of general type translates into the canonical bundle being ample.  In characteristic $0$ then also $\Pic^0$ and $\rH^1(\dO)$ vanish.

All known examples of fake quadrics $X$ are $\bC$-analytically uniformized by  $\bH \times \bH$, the product of two copies of the upper half plane $\bH$, so that $\pi_1(X) \subset \PSL_2(\bR) \times \PSL_2(\bR)$ is a cocompact lattice.
\begin{enumeral}
\item Reducible lattices: these are Beauville surfaces $X = (C_1 \times C_2)/G$ for suitable smooth projective curves $C_i$ of genus $\geq 2$ and a suitable finite group $G$. The first such surface was found by Beauville \cite{beauville:surfaces} exercise X.13 (4) with $G= (\bZ/5\bZ)^2$ acting on a product of Fermat quintic curves, and all such fake quadrics have been classified by Bauer, Catanese and Grunewald \cite{bauercatanesegrunewald}. 
\item  Irreducible lattices: here $\pi_1(X) \subset \PSL_2(\bR) \times \PSL_2(\bR)$ is an irreducible quaternionic lattice of suitable covolume, see Kuga and Shavel \cite{shavel:fakequadric}, and D\v{z}ambi\'c \cite{dzambic:fakequadrics}.
\end{enumeral}

It remains an open question very much in the spirit of the motivation above whether there is a complex  fake quadric homeomorphic to $\bC\bP^1 \times \bC\bP^1$ or at least with finite or trivial $\pi_1$,  see Hirzebruch \cite{hirzebruch:oevre1}  subproblem of Problem 25 on page 779.

\subsection{Non-classical fake quadrics} We will work in positive characteristic and deal with smooth projective surfaces with non-classical properties.

Following \cite{mumford:fakep2} we construct smooth projective surfaces in characteristic $p \geq 3$ by uniformization in formal geometry over $\Spf(\bF_q[[t]])$. For $q=3$ the construction yields a \textbf{non-classical fake quadric} $X$ in characteristic $3$, i.e., a smooth projective surface with ample canonical bundle, trivial Albanese variety and the same Chern numbers as $\bP^1 \times \bP^1$ 
\[
\rc_1^2 = 8, \quad \rc_2 = 4.
\]
The fake quadric will be non-classical because $\rH^1(X,\dO_X) \not= 0$ and so the Picard scheme $\Pic^0_X$ is non-reduced of dimension $0$.

\subsection{The wonderful scheme}
In this section we reset notation and choose a complete discrete valuation ring $R$ with uniformizer $\pi$, \textbf{finite} residue field $k = R/(\pi)$ with $q$ elements, and field of fractions $K = R[\frac{1}{\pi}]$.

For $L = (L_{ij})_{0 \leq i,j \leq 1} \in \GL_2(K)$ we define linear forms $\ell_i = \ell_i^L$ in the variables $x_0$, $x_1$  by matrix multiplication as 
\[
L\genfrac{(}{)}{0pt}{}{x_0}{x_1} = \genfrac{(}{)}{0pt}{}{\ell_0}{\ell_1}.
\]
The affine $R$-scheme defined by  the subring $R\left[\frac{\ell_0}{\ell_1}, \pi \frac{\ell_1}{\ell_0}\right] \subset K(\frac{x_0}{x_1})$ is 
\[
\tilde{Y}_L = \Spec(R\left[\frac{\ell_0}{\ell_1}, \pi \frac{\ell_1}{\ell_0}\right]) \simeq \Spec R[u,v]/(uv - \pi),
\]
hence is a regular scheme and semistable over $\Spec(R)$. 
The special fibre
$\Spec k[u,v]/(uv)$
is a union of affine lines  transversally glued in $(0,0)$. We set 
\[
Y_L \subset \tilde{Y}_L
\]
for the complement of all the finitely many $k$-rational points of the special fibre but keeping the singular point $(0,0)$. Then $Y_L$ actually only depends on the image of $L$ in $\PGL_2(K)$, and  we set 
\[
Y = \bigcup_{L \in \PGL_2(K)} Y_L
\]
as the separated $R$-scheme obtained by glueing the $Y_L$ respecting the function field $K(\frac{x_0}{x_1})$. The generic fibre $Y_K = Y \otimes_R K$ is via the coordinate $x_0/x_1$ a projective line
\[
Y_K  \simeq \bP^1_K,
\]
but as an $R$-scheme $Y/R$ is only locally of finite type. 

An alternative, but less symmetric, way to construct $Y$ starts with $Y_0 = \bP^1_R$ and blows up all $k$-rational points of the special fibre to obtain $Y_1 \to Y_0$. This $Y_1$ has extra components in the special fibre above every $k$-rational point. Inductively we define $Y_{n+1}$ as the blow up of $Y_n$ in all $k$-rational points that do not lie on the strict transform of a component of $Y_{n-1}$.   In this way, the \textit{old} part of $Y_n$ becomes stable and $Y = \varprojlim_n Y_n$ is locally of finite type.

\subsubsection{The group action} The $R$-scheme $Y$ carries an action
\[
\PGL_2(K) \to \Aut_R(Y)
\]
defined as follows. We let $S = (S_{ij}) \in \GL_2(K)$ act in the standard way on $\bP^1_K = Y_K$ via the linear action on homogeneous coordinates $x_0$, $x_1$ with $x =\genfrac{(}{)}{0pt}{}{x_0}{x_1}$ by 
\[
S^{\ast}(x) := S^{-1} x.
\]
This action of the generic fibre $Y_K$ extends to the model $Y$ by permuting the open patches as
\[
\xymatrix@M+1ex@R-2ex{
Y_L \ar[r]^S & Y_{LS} \\
Y_K \ar@{}[u]|{\displaystyle \cup} \ar[r]^S & Y_{K} \ar@{}[u]|{\displaystyle \cup}
}
\]
since
\[
S^\ast \genfrac{(}{)}{0pt}{}{\ell^{LS}_0}{\ell^{LS}_1} =  S^\ast(LSx) = LS(S^\ast x) = LS(S^{-1}x) = Lx =  
\genfrac{(}{)}{0pt}{}{\ell^L_0}{\ell^L_1}.
\]
This action by $\GL_2(K)$ descends to an action by $\PGL_2(K)$ on $Y/R$.

\subsubsection{The dual graph} 
The \textit{dual graph} of the special fibre $Y_s$ of $Y/R$ has a vertex $v_C$ for each irreducible component $C$ of $Y_s$ and an unoriented edge for each double point $P$ that joins the two vertices $v_{C}$, $v_{C'}$  representing the components $C$, $C'$ with $\{P\} = C \cap C'$.

\begin{lem}
The dual graph of $Y_s$ is isomorphic to the Bruhat Tits tree $T$ of $\PGL_2(K)$ as a graph with action by $\PGL_2(K)$.
\end{lem}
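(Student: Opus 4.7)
The plan is to construct an explicit $\PGL_2(K)$-equivariant graph isomorphism between the dual graph of $Y_s$ and $T$ by identifying irreducible components of $Y_s$ with vertices of $T$ (i.e., homothety classes of $R$-lattices in $K^2$) and double points of $Y_s$ with edges.

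First, I would analyze a single patch $Y_L$. Its special fiber $\Spec k[u,v]/(uv)$ is a union of two affine lines meeting transversally at the origin and hence contributes two components and one edge to the dual graph. Reading the rows of $L$ as a basis $(\ell_0, \ell_1)$ of $K^2$, the patch encodes the unoriented edge of $T$ joining the homothety classes $[\Lambda_L]$ and $[\Lambda_L']$ with representatives $\Lambda_L = R\ell_0 + R\ell_1$ and $\Lambda_L' = R\ell_0 + \pi^{-1}R\ell_1$ satisfying $\pi\Lambda_L \subsetneq \Lambda_L' \subsetneq \Lambda_L$. I would identify $\{u = 0\}$ with the reduction mod $\pi$ of the $\bP^1_R$-model attached to $\Lambda_L'$ and $\{v = 0\}$ with that attached to $\Lambda_L$.

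Next, I would use the alternative inductive construction of $Y$ via blow-ups to upgrade this local picture to a global bijection. Starting from $Y_0 = \bP^1_R$, whose special fiber is a single $\bP^1_k$ corresponding to the root vertex $v_0 = [R^2]$, each blow-up of a $k$-rational point $p$ lying on a component $C_{[\Lambda]}$ introduces exactly one new component $C_{[\Lambda_p]}$, a $\bP^1_k$ meeting $C_{[\Lambda]}$ transversally at one point, where $\Lambda_p$ is the unique lattice with $\pi\Lambda \subsetneq \Lambda_p \subsetneq \Lambda$ determined by the line $p$ in $\Lambda/\pi\Lambda \simeq k^2$. Inductively, the depth-$n$ components match vertices of $T$ at graph-distance $n$ from $v_0$, each a $\bP^1_k$ meeting exactly $q+1 = \#\bP^1(k)$ other components (its neighbours in the tree); transitivity of $\PGL_2(K)$ on both sides shows this exhausts both the irreducible components of $Y_s$ and the vertices of $T$.

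Finally, $\PGL_2(K)$-equivariance is built into the construction: the formula $S : Y_L \to Y_{LS}$ translates the edge attached to $L$ into the edge attached to $LS$, and right multiplication by $S$ on the rows of $L$ permutes the bases of $\Lambda_L, \Lambda_L'$ accordingly, which matches the natural action of $\PGL_2(K)$ on $\PGL_2(K)/\PGL_2(R) = V(T)$. The main obstacle is the combinatorial bookkeeping needed to check that each local component globalizes to a genuine $\bP^1_k$ with exactly $q+1$ nodal intersection points and no self-intersections, and that different patches $Y_L$ corresponding to the same vertex glue consistently; both are transparent from the iterated blow-up description, but less so from the symmetric union-of-patches definition, so I would lean on the former for the verification.
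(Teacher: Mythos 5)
Your overall architecture is the paper's: components of $Y_s$ should correspond to homothety classes of lattices, nodes to chains $\pi M_1 \subsetneq M_2 \subsetneq M_1$, and equivariance should come from $S\colon Y_L \to Y_{LS}$. But your explicit patch dictionary is wrong, and in a way that breaks the argument as written. In $Y_L$ with $u=\ell_0/\ell_1$ and $v=\pi\ell_1/\ell_0$, the branch $\{v=0\}$ (on which $u$ is a coordinate) is the strict transform of the special fibre of $\Proj R[\ell_0,\ell_1]$, i.e.\ the vertex $[R\ell_0\oplus R\ell_1]$, while the branch $\{u=0\}$ (on which $v=\pi\ell_1/\ell_0$ is a coordinate) is the strict transform of the special fibre of $\Proj R[\ell_0,\pi\ell_1]$, i.e.\ the vertex $[R\ell_0\oplus\pi R\ell_1]$ --- not your $[R\ell_0\oplus\pi^{-1}R\ell_1]=[\pi R\ell_0\oplus R\ell_1]$, which is a \emph{different} neighbour of $[R\ell_0\oplus R\ell_1]$. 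With your choice the label of a global component depends on the chart: for $\ell_0=x_0,\ \ell_1=x_1$ the branch $\{u=0\}$ receives $[\pi Rx_0\oplus Rx_1]$, whereas the same component of $Y_s$, seen as the branch $\{v'=0\}$ of the chart with $\ell_0'=x_0,\ \ell_1'=\pi x_1$, receives $[Rx_0\oplus\pi Rx_1]$; so ``component $\mapsto$ vertex'' is not well defined as stated. (Your chain $\pi\Lambda_L\subsetneq\Lambda_L'\subsetneq\Lambda_L$ does hold for the homothetic representative $\pi R\ell_0\oplus R\ell_1$, but the chain condition alone is satisfied by all $q+1$ neighbours, so it cannot rescue the specific, incorrect formula.)

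Once the formula is corrected, your plan does go through and is essentially the paper's proof: the paper attaches to a component the lattice $\rH^0(P_L,\dO(1))=R\ell_0\oplus R\ell_1$ of the model $P_L=\Proj R[\ell_0,\ell_1]$ whose special fibre the component dominates; since this depends on $L$ only through its class in $\PGL_2(K)/\PGL_2(R)$, well-definedness across charts and $\PGL_2(K)$-equivariance are immediate, and the edge statement is exactly your single-patch node analysis. Your detour through the rooted blow-up tower is a workable substitute for the consistency check (and there you do state the correct neighbour rule via lines in $\Lambda/\pi\Lambda$), but beware that the tower is rooted at $[R^2]$ and its depth filtration is not preserved by $\PGL_2(K)$, while you argue equivariance on the charts $Y_L$; to splice the two you still need a chart-independent characterization of which vertex a component carries --- e.g.\ the paper's ``strict transform of the special fibre of $P_L$'' description, or equivalently which degree-one rational functions reduce non-constantly on it --- at which point the blow-up induction becomes unnecessary.
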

\begin{proof}
There is a natural $\PGL_2(K)$-equivariant bijection between the following three sets:
\begin{enumeral}
\item Irreducible components of the special fibre $Y_s$.
\item $R$-lattices $M$ in $\rH^0(\bP^1_K,\dO(1)) = K \cdot x_0 \oplus K \cdot x_1$ up to homothety, i.e., free $R$-submodules of rank $2$ up to scaling by $K^\ast$.
\item Vertices of $T$.
\end{enumeral}
Indeed, a component of $Y_s$ is given via $L \in \GL_2(K)$ with $Lx = \genfrac{(}{)}{0pt}{}{\ell_0}{\ell_1}$ as the strict transform of the special fibre of 
\[
Y \xrightarrow{}  P_L = \Proj R[\ell_0,\ell_1] \simeq  \bP^1_R
\]
where the map  is the identity on function fields $K(\frac{x_0}{x_1}) = K(\frac{\ell_0}{\ell_1})$.  This depends on $L$ only through its class in 
\[
\GL_2(K)/K^\ast \cdot \GL_2(R) = \PGL_2(K)/\PGL_2(R).
\]
The corresponding homothety class of lattices is 
\[
M_L = R \cdot \ell_0 \oplus R \cdot \ell_1 = \rH^0(P_L,\dO(1)) \subset \rH^0(\bP^1_K,\dO(1)).
\]

Two components $C_1,C_2 \subset Y_s$ intersect in a point $P$ if and only if $Y$ is given in a neighbourhood of $P$ by $Y_L$ with the two components of the special fibre $Y_{L,s}$ having $C_1$ and $C_2$ as closure. This holds if and only if the corresponding lattices are of the form 
\[
\pi M_1  \subsetneq  M_{2}= R \cdot \pi \ell_1 \oplus R \cdot \ell_0 \subsetneq M_1 = R \cdot \ell_0 \oplus R \cdot \ell_1
\]
for $\ell_0,\ell_1$ as above determined by $L$, if and only if the corresponding vertices in the Bruhat Tits tree are joined by an edge.

Equivariance under $\PGL_2(K)$ is obvious by the naturality of the construction.
\end{proof}

\subsection{The wonderful scheme squared}
We are actually interested in $Y \times_R Y$.

\subsubsection{Local structure}
The $R$-scheme $Y \times_R Y$ is locally isomorphic to 
\[
R[u,v,w,z]/(uv = \pi = wz) = R[u,v]/(uv -\pi) \otimes_R R[w,z]/(wz-\pi).
\]
Since $R[u,v]/(uv-\pi)$ is regular and even smooth over $R$ outside of $(0,0)$, the only singular points of $Y \times_R Y$ correspond in the local model to $(u,v,w,z) = (0,0,0,0)$. There $u,v,w+z$ forms a regular sequence of length $3 = \dim(Y \times_R Y)$, so that $Y  \times_R Y$ is normal, Cohen-Macauley, and even regular outside the $k$-rational points of its special fibre.

If we blow-up the singular locus, then every singular point is replaced by a copy of $\bP^1_k \times \bP^1_k$ and the special fibre of the blow-up  has normal crossing.

\subsubsection{The dual square complex} 

We define a \textit{dual square complex} $\Sigma$ describing the combinatorics of the components of its special fibre. 
\begin{enumeral}
\item 
Vertices of $\Sigma$ are irreducible components of the special fibre 
\[
(Y \times_R Y)_s = Y_s \times_k Y_s.
\]
These are isomorphic to  $\bP^1_k \times \bP^1_k$.
\item
Unoriented edges of $\Sigma$ are irreducible curves in the intersection of irreducible components of $Y_s$ and the edge joins the corresponding vertices. Such intersections occur precisely  along the grid of vertical and horizontal lines
\[
\bP^1_k \times \bP^1(k) \cup \bP^1(k) \times \bP^1_k \  \subset  \ \bP^1_k \times \bP^1_k
\]
with the intersection being transversal outside of $\bP^1(k) \times \bP^1(k)$. 
\item Squares  of $\Sigma$ are defined by singular points of $Y \times_R Y$. These are the $k$-rational points of the special fibre, i.e., on a component the points $P \in \bP^1(k) \times \bP^1(k)$. There are four components of $Y_s \times_k Y_s$ passing through each $P$ and there $Y \times_R Y$ is locally isomorphic to 
\[
\Spec\big(R[u,v,w,z]/(uv = \pi = wz)\big)
\]
with $P$ mapping to the non-regular point $(0,0,0,0)$. 
\end{enumeral}

\begin{lem}
The product $Y \times_R Y$ carries a natural action by $\PGL_2(K) \times \PGL_2(K)$ such that the dual square complex of its special fibre agrees with the square complex $T \times T$ together with its Bruhat Tits action by $\PGL_2(K) \times \PGL_2(K)$.
\end{lem}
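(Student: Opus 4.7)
\begin{pro}[Proof proposal]
The plan is to transport the description of the dual graph of $Y_s$ as the Bruhat--Tits tree $T$, established in the preceding lemma, through the product construction. The action of $\PGL_2(K)\times\PGL_2(K)$ on $Y\times_R Y$ is defined factorwise: the first factor acts on the left copy via the action $(S_1,S_2).(P_1,P_2) = (S_1.P_1,S_2.P_2)$, and this respects the product $R$-scheme structure because the individual actions are $R$-linear. Equivariance of the various correspondences below will then be automatic from the product nature of the construction and the already established $\PGL_2(K)$-equivariance of $T \leftrightarrow$ components of $Y_s$.

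First I would match the vertices. Since $Y_s \times_k Y_s$ has as irreducible components precisely the products $C_1 \times_k C_2$ with $C_i$ an irreducible component of $Y_s$, vertices of $\Sigma$ biject with ordered pairs of vertices of $T$, i.e.\ with vertices of $T \times T$. Next, the edges: two components $C_1 \times_k C_2$ and $C_1' \times_k C_2'$ of $Y_s \times_k Y_s$ meet in a curve if and only if either $C_1 = C_1'$ and $C_2 \cap C_2' = \{P\}$ is a point (intersection curve $C_1 \times \{P\}$, a horizontal line) or symmetrically $C_2 = C_2'$ and $C_1 \cap C_1' = \{P'\}$ (vertical line $\{P'\} \times C_2$). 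These two cases correspond exactly to the two families of edges in the product complex $T \times T$: a horizontal/vertical edge moves one coordinate along an edge of $T$ while keeping the other fixed. Thus $1$-cells of $\Sigma$ biject equivariantly with $1$-cells of $T \times T$.

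For the $2$-cells, a singular point of $Y \times_R Y$ is a $k$-rational point $(P_1,P_2) \in \bP^1(k) \times \bP^1(k)$ sitting inside some component $C_1 \times C_2$, and the local model $\Spec R[u,v,w,z]/(uv-\pi,wz-\pi)$ shows that exactly four components of $Y_s \times_k Y_s$ pass through it, namely $C_1^{(\epsilon_1)} \times C_2^{(\epsilon_2)}$ for $\epsilon_i \in \{0,1\}$, where $C_i^{(0)} = C_i$ and $C_i^{(1)}$ is the other component of $Y_s$ through $P_i$. This quadruple of vertices, together with the four joining intersection curves, is precisely a square in $T \times T$ (one factor moves along the edge of $T$ between $v_{C_1}$ and $v_{C_1^{(1)}}$, the other along the edge between $v_{C_2}$ and $v_{C_2^{(1)}}$), and conversely every square of $T \times T$ arises this way. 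So $2$-cells correspond.

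The main obstacle, if any, is bookkeeping: checking that the incidence of edges with vertices and of squares with edges in $\Sigma$ matches the combinatorial incidence in $T \times T$, and that the $\PGL_2(K) \times \PGL_2(K)$-action is compatible under all three bijections. Both follow mechanically from the product structure and the equivariant identification $T = \text{dual graph of } Y_s$ already proved. Assembling these three bijections yields the claimed isomorphism of square complexes with group action. \end{pro}
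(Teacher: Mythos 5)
Your proof is correct and follows the same route as the paper, which simply observes that the dual square complex of $Y \times_R Y$ is the product of the dual graph of $Y_s$ with itself and then invokes the equivariant identification of that dual graph with $T$ from the preceding lemma. You have merely spelled out the cell-by-cell bookkeeping that the paper leaves implicit.
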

\begin{proof}
The dual square complex $\Sigma$ is the product of the dual graph of $Y_s$ with itself. 
\end{proof}

\subsection{The formal geometric quotient} Consider the formal scheme 
\[
\cY \times_R \cY \to \Spf(R)
\]
obtained by completing $Y \times_R Y$ along its special fibre. The generic fibre, in the sense of rigid geometry over $K$, is $\bP^1_K \times \bP^1_K$, hence a smooth projective rigid variety.

\smallskip

Now let $\Gamma \subset \PGL_2(K) \times \PGL_2(K)$ be a discrete torsion free group that acts cocompactly on $T \times T$ via the Bruhat Tits action. The induced action on $\cY \times_R \cY$ is free and discontinuously on the underlying Zariski topology, namely the Zariski topology of $Y_s \times_k Y_s$. Indeed, the underlying dual square complex is $\Sigma = T \times T$ and $\Gamma$ acts already with trivial stabilizers on vertices, edges or squares of $\Sigma$. It is thus trivially possible to pass to the quotient 
\[
\cX_\Gamma = \Gamma \backslash \cY \times_R \cY
\]
as a formal scheme over $\Spf(R)$. The dual square complex of $\cX$ is the finite quotient complex
\[
\Sigma(\cX_\Gamma) = \Gamma \backslash \Sigma.
\]
Thus $\cX$ is of finite type and proper as a formal scheme over $\Spf(R)$.
Let $N$ be the number of vertices (in some sense the volume) of $\Sigma(\cX_\Gamma)$. 

\begin{lem} \label{lem:countingcombinatorics}
The number of unoriented edges and squares in the square complex $\Sigma(\cX_\Gamma)$ is
\begin{enumera}
\item $\#\ov{E}(\Sigma(\cX_\Gamma)) = N  (q+1)$,
\item $\#S(\Sigma(\cX_\Gamma)) = N  (q+1)^2/4$, 
\end{enumera}
and the geometric realization $\Sigma(\cX_\Gamma)$ has Euler--characteristic 
\[
\chi(\Sigma(\cX_\Gamma)) = N (q-1)^2/4.
\]
\end{lem}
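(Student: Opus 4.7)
\begin{pro}
Since $\Gamma$ acts freely on the vertices, edges, and squares of $T\times T$ (torsion free cocompact, hence with trivial cell stabilizers on the CAT(0) cube complex), all counts can be performed by local accounting at a single vertex in $T\times T$ and then summed over the $N$ vertices of $\Sigma(\cX_\Gamma)$.

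For the edge count, each vertex of $T \times T$ has exactly $q+1$ vertical neighbours and $q+1$ horizontal neighbours, hence $2(q+1)$ incident unoriented edges. Each unoriented edge has two endpoints, so the number of unoriented edges in $\Sigma(\cX_\Gamma)$ is
\[
\#\ov{E}(\Sigma(\cX_\Gamma)) \;=\; \frac{N \cdot 2(q+1)}{2} \;=\; N(q+1),
\]
which is (a). For the square count, a square at a fixed vertex $v$ of $T\times T$ is determined by the choice of one vertical neighbour and one horizontal neighbour of $v$, so the number of squares containing $v$ is exactly $(q+1)^2$; since each square has four corners, we obtain
\[
\#S(\Sigma(\cX_\Gamma)) \;=\; \frac{N \cdot (q+1)^2}{4},
\]
which is (b). The denominators are integers because the free action implies each cell of $\Sigma(\cX_\Gamma)$ is counted with the same multiplicity as in $T\times T$.

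Finally, the geometric realization of $\Sigma(\cX_\Gamma)$ is a finite CW-complex with $N$ vertices (dimension $0$), $N(q+1)$ edges (dimension $1$), and $N(q+1)^2/4$ squares (dimension $2$). Therefore
\[
\chi(\Sigma(\cX_\Gamma)) \;=\; N - N(q+1) + \frac{N(q+1)^2}{4} \;=\; \frac{N\bigl((q+1)^2 - 4(q+1) + 4\bigr)}{4} \;=\; \frac{N(q-1)^2}{4},
\]
as claimed. The only genuine point to check is the freeness of the $\Gamma$-action on cells, which is immediate from torsion-freeness together with the fact that every finite subgroup of the isometry group of the CAT(0) space $T \times T$ stabilizes a cell; beyond that the argument is pure bookkeeping.
\end{pro}
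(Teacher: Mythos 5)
Your proof is correct and follows essentially the same double-counting argument as the paper (the paper counts edge--square incidences where you count vertex--square incidences, but this is the same bookkeeping), finishing with the identical Euler characteristic computation. The remark on freeness of the $\Gamma$-action on cells matches the paper's earlier observation that torsion-freeness gives trivial stabilizers on vertices, edges and squares.
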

\begin{proof}
The formulae for the number of edges and squares follow by counting in two ways the number of "vertices in the boundary of edges":
\[
2 \#\ov{E}(\Sigma(\cX_\Gamma))  =  2(q+1) N,
\]
and of "edges in the boundary of squares":
\[
4 \#S(\Sigma(\cX_\Gamma)) = (q+1)^2 N.
\]
Then the  Euler characteristic is 
\[
\chi(\Sigma(\cX_\Gamma)) = N - N(q+1) +  N(q+1)^2/4 =  N(q-1)^2/4.
\]
\end{proof}

\subsection{Algebraization}
The sheaf of relative log-differentials (with respect to the log structure defined by the special fibre)
\[
\Omega^{1,\log}_{Y/R}
\]
is a line bundle on $Y$ that locally on $Y_L$ is generated in $\Omega^1_{K(x_0/x_1)/K}$ by 
\[
d \log\left(\frac{\ell_0}{\ell1}\right), \quad d \log\left(\frac{\pi\ell_1}{\ell_0}\right)
\]
and the action by $\PGL_2(K)$ on $Y$ extends to an action on $\Omega^{1,\log}_{Y/R}$.
The restriction of $\Omega^{1,\log}_{Y/R}$ to a component $\bP^1_k$ of $Y_s$ takes the form
\[
\Omega^1_{\bP^1_k/k}(\bP^1(k)) \simeq \dO(q - 1).
\]

Let us consider the exterior tensor product  on $Y \times_R Y$
\[
\Omega^{2,\log}_{Y \times_R Y/R} = \Omega^{1,\log}_{Y/R} \boxtimes \Omega^{1,\log}_{Y/R},
\]
that, when restricted to an irreducible component $\bP^1_k \times_k \bP^1_k  \simeq C \subset Y_s \times_k Y_s$, becomes 
\begin{equation} \label{eq:omega2logoncomponent}
\Omega^{2,\log}_{Y \times_R Y/R}|_C \simeq \dO(q-1) \boxtimes \dO(q-1).
\end{equation}

\begin{prop} \label{prop:algebraization}
\begin{enumera}
\item
The formal scheme $\cX_\Gamma/R$ is the formal completion along the special fibre of a projective $R$-scheme $X_\Gamma/R$.
\item The generic fibre $X_{\Gamma,K} = X_\Gamma \otimes_R K$ is smooth projective with ample canonical bundle.
\item In particular, $X_{\Gamma,K}$ is a minimal surface of general type without smooth rational curves $E \subset X_{\Gamma,K}$ with self intersection $(E^2) = -1$ or $-2$.
\end{enumera}
\end{prop}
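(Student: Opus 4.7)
The proof follows the Mumford-style strategy of \cite{mumford:fakep2}: construct an ample line bundle on the formal scheme $\cX_\Gamma$, invoke Grothendieck's algebraization theorem (EGA~III, 5.4.5) to pass from the formal to the algebraic category, and then read off properties of the generic fibre.

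First I would observe that $\Omega^{2,\log}_{Y \times_R Y/R} = \Omega^{1,\log}_{Y/R} \boxtimes \Omega^{1,\log}_{Y/R}$ is canonically $\PGL_2(K) \times \PGL_2(K)$-equivariant. Because $\Gamma$ is torsion free and acts freely on vertices, edges and squares of $\Sigma = T \times T$, its induced action on $\cY \times_R \cY$ is free and properly discontinuous, and $\Omega^{2,\log}_{Y \times_R Y/R}$ descends to a line bundle $\omega^{\log}_\Gamma$ on the quotient formal scheme $\cX_\Gamma$.

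The principal technical step, and the one I expect to present the main obstacle, is the verification that $\omega^{\log}_\Gamma$ is ample. By the standard formal-to-algebraic criterion it suffices to check ampleness of its restriction to the proper reduced $k$-scheme $\cX_{\Gamma,s}$, and for this I would apply Nakai--Moishezon. Because $\Gamma$ acts freely on the cells of $\Sigma$, the irreducible components of $\cX_{\Gamma,s}$ are still copies of $\bP^1_k \times \bP^1_k$, the double curves are copies of $\bP^1_k$, and formula \eqref{eq:omega2logoncomponent} still reads $\omega^{\log}_\Gamma|_C \simeq \dO(q-1) \boxtimes \dO(q-1)$ on every component $C$. From this one computes that the $\omega^{\log}_\Gamma$-degree of any double curve or any irreducible curve inside a component is at least $q-1$, and that the self-intersection of $\omega^{\log}_\Gamma$ restricted to a component equals $2(q-1)^2$. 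Since $q$ is an odd prime power, $q \geq 3$, so all these numbers are strictly positive and Nakai--Moishezon applies. Grothendieck's existence theorem then furnishes a unique projective $R$-scheme $X_\Gamma$ whose formal completion along the special fibre is $\cX_\Gamma$ and on which $\omega^{\log}_\Gamma$ extends to an ample line bundle. This proves (1).

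For (2), the formal scheme $\cY \times_R \cY$ is regular away from the $k$-rational points of its special fibre, and the free action of $\Gamma$ preserves regularity, so $\cX_\Gamma$ is regular away from a finite set of closed points in its special fibre. The algebraization $X_\Gamma$ inherits this, and in particular the generic fibre $X_{\Gamma,K}$ is a smooth projective surface over $K$. Moreover, since $\Omega^{1,\log}_{Y/R}$ restricts on the generic fibre $\bP^1_K$ to $\Omega^1_{\bP^1_K/K}$, the generic fibre of $\omega^{\log}_\Gamma$ identifies with the canonical bundle $\omega_{X_{\Gamma,K}/K}$, which remains ample after restriction from $X_\Gamma$ to its generic fibre. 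Assertion (3) is then an immediate consequence of adjunction: any smooth rational curve $E \subset X_{\Gamma,K}$ satisfies $K_{X_{\Gamma,K}} \cdot E + E^2 = -2$, so $E^2 \in \{-1,-2\}$ would force $K_{X_{\Gamma,K}} \cdot E \leq 0$, contradicting the ampleness of the canonical bundle.
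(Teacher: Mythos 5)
Your overall route is the same as the paper's: descend $\Omega^{2,\log}_{Y\times_R Y/R}$ along the free action to a line bundle on $\cX_\Gamma$, verify ampleness on the special fibre, algebraize by Grothendieck's existence theorem, and identify the restriction to the generic fibre with the canonical bundle; your Nakai--Moishezon computation is just a spelled-out version of the paper's remark that the bundle restricts to $\dO(q-1)\boxtimes\dO(q-1)$ on the normalization of every irreducible component (note only that the components of $\cX_{\Gamma,s}$ need not literally be $\bP^1_k\times_k\bP^1_k$ --- they may be glued to themselves along double curves --- but their normalizations are, which is all your intersection numbers require). Your adjunction argument for (3) is a harmless variant of the paper's remark that $(-1)$- and $(-2)$-curves are exactly the curves contracted by the canonical model.

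The one genuine gap is in (2): you deduce smoothness of $X_{\Gamma,K}$ from regularity of $X_\Gamma$ away from the special-fibre singular points. Over the imperfect field $K=\bF_q((t))$ this inference fails in general: a regular $K$-variety need not be smooth (geometrically regular) over $K$, and absolute regularity of the total space only gives regularity of the generic fibre. What saves the argument is that the local model $R[u,v,w,z]/(uv-\pi,\,wz-\pi)$ is not merely regular but \emph{smooth over $R$} wherever $uv\neq 0$ and $wz\neq 0$, which holds at every point of the generic fibre since there $uv=wz=\pi\neq 0$. So you should transport the relative property ($R$-smoothness away from the four-fold points), not the absolute one: it is Zariski-local, preserved under the free quotient $\cY\times_R\cY\to\cX_\Gamma$, and smoothness of the generic fibre of the algebraization can be tested locally on the formal completion along the special fibre --- this is exactly how the paper argues, using that $\cY\times_R\cY\to\cX_\Gamma$ is a Zariski-locally trivial covering. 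With "regular" replaced by "smooth over $R$" in this way, your proof is complete and agrees with the paper's.
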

\begin{proof}
(1)
Since $\cX_\Gamma/\Spf(R)$ is proper, it suffices by Grothendieck's formal GAGA to find a relatively ample line bundle on $\cX_\Gamma$. 
The pull back to the formal completion $\Omega^{2,\log}_{Y \times_R Y/R}|_{\cY \times_R \cY}$
descends to a line bundle 
\[
\Omega^{2,\log}_{\cX_\Gamma/R}
\]
on $\cX_\Gamma$ which is ample, because it restricts to an ample line bundle $\dO(q-1) \boxtimes \dO(q-1)$ 
on the normalization of every irreducible
 component of $\cX_\Gamma$ by \eqref{eq:omega2logoncomponent}. It follows that $\cX_\Gamma$ can be algebraized 
 to a projective scheme $X_\Gamma/R$. 

(2) The generic fibre $(Y \times_R Y)_K$ is smooth over $K$. Therefore also the generic fibre $X_{\Gamma,K}/K$ is smooth, because  
$\cY \times_R \cY \to \cX_\Gamma$ is a Zarsiki locally trivial covering map and smoothness of the generic fibre can be tested locally on the formal completion along the special fibre.
Moreover, the canonical sheaf $\omega_{X_{\Gamma,K}/K} = \Omega^{2,\log}_{X_\Gamma/R} |_{X_{\Gamma,K}}$ 
is ample as the restriction of the relatively ample line bundle $\Omega^{2,\log}_{\cX_\Gamma/R}$.

Assertion (3) follows at once from (2) since smooth rational curves $E$ with $(E^2) = -1, -2$ are precisely those which are contracted in the map to the canonical model.
\end{proof}

\subsection{Chern numbers}
We compute the Chern numbers of $X_\Gamma$ by degeneration to the special fibre 
$X_{\Gamma,s} = X_\Gamma \otimes_R k$, a proper $k$-variety. Let
\begin{enumera}
\item $\pi_E: \tilde{E} \to E$ be the normalization of an irreducible component $E \subset X_{\Gamma,s}$, hence a vertex $E \in V(\Sigma(\cX_\Gamma))$, and $\tilde{E} \simeq \bP^1_k \times_k \bP^1_k$,
\item $\pi_C: \tilde{C} \to C$ be the normalization of an irreducible curve in $X_{\Gamma,s}$ that represents an unoriented edge $C \in \ov{E}(\Sigma(\cX_\Gamma))$, and
$\tilde{C} \simeq \bP^1_k$, 
\item $i_P : P \inj X_{\Gamma,s}$ be the $k$-rational point corresponding to a square $P \in S(\Sigma(\cX_\Gamma))$.
\end{enumera}
If the vertices $E_1,E_2$ are joined by an edge $C$, then there are natural immersions 
\begin{equation} \label{eq:res1}
\tilde{E}_1 \hookleftarrow  \tilde{C} \inj \tilde{E}_2,
\end{equation}
one as vertical and the other as horizontal line.
If  $C$ is an edge in the boundary of a square $P$, then associated to this is an inclusion 
\begin{equation} \label{eq:res2}
P \inj \tilde{C}.
\end{equation}
We now choose an orientation on $\Sigma(\cX_\Gamma)$. This induces signs for every incidence relation, a vertex in an edge, an edge in a square, such that the complex
\begin{equation} \label{eq:choiceofsigns}
\bigoplus_{E \in V(\Sigma(\cX_\Gamma))} \bZ  \xrightarrow{\pm \res} \bigoplus_{C \in \bar{E}(\Sigma(\cX_\Gamma))} \bZ \xrightarrow{\pm\res} \bigoplus_{P \in S(\Sigma(\cX_\Gamma))}\bZ
\end{equation}
becomes the cellular complex of the CW-complex $\Sigma(\cX_\Gamma)$. 

\begin{prop} \label{prop:resolution}
With signs as in \eqref{eq:choiceofsigns} and restriction induced by the inclusions \eqref{eq:res1} and \eqref{eq:res2} the following sequence  is exact:
\[
0 \to \dO_{X_{\Gamma,s}} \to \bigoplus_{E \in V(\Sigma(\cX_\Gamma))} \pi_{E,\ast} \dO_{\tilde{E}}  \xrightarrow{\pm\res} \bigoplus_{C \in \bar{E}(\Sigma(\cX_\Gamma))} \pi_{C,\ast} \dO_{\tilde{C}} \xrightarrow{\pm\res} \bigoplus_{P \in S(\Sigma(\cX_\Gamma))} i_{P,\ast} \dO_P \to 0.
\]
\end{prop}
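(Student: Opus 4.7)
My strategy is to check exactness stalk-wise after pulling back along the covering $\cY \times_R \cY \to \cX_\Gamma$. Since $\Gamma$ acts on $\cY \times_R \cY$ freely and with trivial stabilizers on every cell of $\Sigma = T \times T$, this quotient is Zariski-locally an isomorphism: sufficiently small Zariski opens of $\cX_\Gamma$ have preimages that are disjoint unions of copies. Thus exactness of the sequence in the proposition is equivalent to exactness of the analogous complex on $Y_s \times_k Y_s$ indexed by vertices, edges and squares of $T \times T$, and this latter question is Zariski-local on $Y_s \times_k Y_s$.

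A closed point of $Y_s \times_k Y_s$ has one of three combinatorial types: (a) a smooth point of one component; (b) a generic point of a double curve (two components meeting transversally); or (c) a $k$-rational quadruple point where four components meet. In case (a) only one term of $\bigoplus \pi_{E,\ast}\dO_{\tilde{E}}$ contributes non-trivially and the complex collapses to $0 \to \dO \to \dO \to 0$. In case (b) the local model is $A = k[u,v,s,t]/(uv)$, and the stalk of the complex reduces to the classical normalization sequence
\[
0 \to A \to k[v,s,t] \oplus k[u,s,t] \to k[s,t] \to 0,
\]
whose exactness is Mayer--Vietoris for gluing two smooth hyperplanes along a common divisor. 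Both (a) and (b) are trivial or classical.

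The main case is (c), with local model $A = k[u,v,w,z]/(uv, wz)$, four components $\tilde{E}_i \simeq \bA^2_k$, four intersection curves $\tilde{C}_{ij} \simeq \bA^1_k$, and the single point $P = \Spec k$. The key device is the $\bZ_{\geq 0}^2$-bigrading of $A$ given by $\deg(u) = \deg(v) = (1,0)$, $\deg(w) = \deg(z) = (0,1)$, which is inherited by every term and every differential of the local complex, so exactness may be verified on each bigraded piece separately. In bidegree $(0,0)$ the sequence becomes
\[
0 \to k \to k^4 \to k^4 \to k \to 0,
\]
which, under the sign convention of \eqref{eq:choiceofsigns}, is exactly the augmented cellular cochain complex of the oriented square with vertices $\tilde{E}_i$, edges $\tilde{C}_{ij}$ and single $2$-cell $P$ --- exact by the contractibility of the square. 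In bidegree $(i,0)$ with $i \geq 1$ only the monomials $u^i$ and $v^i$ survive, each supported on the single edge of the square connecting the two components in which that variable persists; the restricted complex is the augmented cochain complex $0 \to k \to k^2 \to k \to 0$ of that edge, again contractible. Bidegree $(0,j)$ is symmetric. In bidegree $(i,j)$ with $i,j \geq 1$ each of the four monomials $u^i w^j,\, u^i z^j,\, v^i w^j,\, v^i z^j$ is supported on a single vertex of the square and the complex reduces to the trivially exact $0 \to k \to k \to 0$. Combining the three cases completes the verification. The only delicate point is the bookkeeping of signs: the convention of \eqref{eq:choiceofsigns} is exactly what identifies the \v Cech-type differential with the cellular coboundary of the oriented square, and this should be checked explicitly by inspection.
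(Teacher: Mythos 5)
Your proof is correct, and its global skeleton --- pulling back along the Zariski-locally trivial covering $\cY \times_R \cY \to \cX_\Gamma$, disposing of the smooth locus and of the double-curve (normal crossing) locus, and concentrating all the work at the quadruple points --- is the same as the paper's. The difference lies in the local computation at a quadruple point. The paper exploits the factorization $k[u,v,w,z]/(uv,wz) \simeq k[u,v]/(uv) \otimes_k k[w,z]/(wz)$ together with exactness of the one-dimensional normalization sequence $0 \to k[u,v]/(uv) \to k[u]\oplus k[v] \to k \to 0$: tensoring the two augmentation quasi-isomorphisms over the field $k$ and passing to the total complex exhibits the local complex as a resolution in one stroke. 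You instead decompose the local complex by the $\bZ_{\geq 0}^2$-grading with $\deg u = \deg v = (1,0)$, $\deg w = \deg z = (0,1)$, which every differential preserves, and identify each bigraded piece with the augmented cellular cochain complex of a contractible subcomplex of the local dual square (the full square in bidegree $(0,0)$, a closed edge in bidegrees $(i,0)$ and $(0,j)$, a single vertex in bidegrees $(i,j)$ with $i,j \geq 1$); your accounting of which monomials survive in each piece is accurate. What your route buys is an elementary, completely explicit verification that also isolates exactly where the sign convention \eqref{eq:choiceofsigns} enters (for each edge the two vertex restrictions must occur with opposite signs, which the cellular convention guarantees); what the paper's K\"unneth-style argument buys is brevity and a formulation that would adapt immediately to products with more factors. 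Two cosmetic points: in case (b) a closed point cannot be a \emph{generic} point of a double curve --- what you mean, and what your local model $k[u,v,s,t]/(uv)$ correctly describes, is a closed point of a double curve away from the quadruple points; and restricting attention to closed points is legitimate here because $Y_s \times_k Y_s$ is of finite type over $k$, hence Jacobson, so any nonzero cohomology sheaf would have a closed point in its support.
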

\begin{proof}
The sequence lives on $X_{\Gamma,s} = \cX_{\Gamma,0}$, the special fibre of the formal $R$-scheme $\cX_\Gamma$. Being a Zariski local issue exactness can be checked after pull back to $(\cY \times_R \cY)_0 = Y_s \times_k Y_s$. There exactness is clear  in the locus where the components  meet as normal crossing divisors on $Y$. It remains to check exactness in a singular point $P$.

In $P$ we exploit the product structure: locally we need to consider the spectrum of 
\[
k[u,v,x,y]/(uv,xy) = k[u,v]/(uv) \otimes_k k[x,y]/(xy).
\]
Now the sequence built from restriction maps 
\[
0 \to k[u,v]/(uv) \to k[u] \oplus k[v] \xrightarrow{\pm} k \to 0
\]
is exact, so that 
\[
k[u,v,x,y]/(uv,xy) \to \Tot \left(\left[k[u] \oplus k[v] \xrightarrow{\pm} k\right] \otimes_k \left[k[x] \oplus k[y] \xrightarrow{\pm} k\right]\right)
\]
is a quasi-isomorphism. The corresponding resolution of $k[u,v,x,y]/(uv,xy)$ is precisely our complex locally in a neighbourhood of $P$ resp.\ $(0,0,0,0)$.
\end{proof}

Since $X_\Gamma/R$ is flat and projective, the Euler characteristic of its fibres is constant. 
We conclude by Proposition~\ref{prop:resolution} and Lemma~\ref{lem:countingcombinatorics} that
\begin{eqnarray} \label{eq:chiX_K}
\chi(X_{\Gamma,K},\dO_{X_{\Gamma,K}}) & = & \chi(X_{\Gamma,s},\dO_{X_{\Gamma,s}}) \\
& = & \#V(\Sigma(\cX_\Gamma)) \chi(\bP^1 \times \bP^1,\dO) - \#\ov{E}(\Sigma(\cX_\Gamma)) \chi(\bP^1,\dO) + \#S(\Sigma(\cX_\Gamma)) \chi(P,\dO) \notag \\
& = & \chi(\Sigma(\cX_\Gamma)) = N  (q-1)^2/4. \notag
\end{eqnarray}
Similarly, we  compute the square of the canonical class
\begin{eqnarray} \label{eq:c1squared} 
\rc_1(X_{\Gamma,K})^2  & = & \big(\rc_1(\Omega^{2,\log}_{\cX_\Gamma/R})|_{X_{\Gamma,s}}\big)^2 \\
& = & \sum_{E \in V(\Sigma(\cX_\Gamma))} \big(\rc_1(\Omega^{2,\log}_{\cX_\Gamma/R})|_{\tilde{E}}\big)^2 = N \cdot \big((\dO(q-1) \boxtimes \dO(q-1))^2\big) \notag \\
& = & 2N(q-1)^2. \notag
\end{eqnarray}
Noether's formula  then yields
\begin{eqnarray} \label{eq:c2} 
\rc_2(X_{\Gamma,K})  & = & 12\chi(X_{\Gamma,K},\dO_{X_{\Gamma,K}})  - \rc_1(X_{\Gamma,K})^2   = N (q-1)^2. 
\end{eqnarray}

\subsection{The Albanese variety via Kummer \'etale cohomology}
We endow $X_\Gamma/R$ with the fs-log structure in the sense of Fontaine and Illusie determined by its special fibre. The resulting log-scheme is log-smooth and projective over $\Spec(R)$ endowed with its canonical log-structure. Let $\bar K$ be an algebraic closure of $K$ and $X_{\Gamma,\bar K}$ the corresponding  geometric generic fibre. Let $\tilde{s} \to \Spec(R)$ be a log-geometric point over the closed point and $X_{\Gamma,\tilde{s}}$ the corresponding log geometric special fibre.

\smallskip

Let $\Lambda$ be a finite ring of order prime to $p$, for example $\Lambda = \bF_\ell$ for a prime $\ell \not= p$. Since $X_\Gamma/R$ is log-smooth, the Kummer-\'etale cospecialisation map is an isomorphism
\[
\rH^1_{\et}(X_{\Gamma,\bar K},\Lambda)  \simeq \rH^1_{\ket}(X_{\Gamma,\tilde{s}},\Lambda).
\]
We compute $\rH^1_\ket(X_{\Gamma,\tilde{s}},\Lambda)$ by means of a resolution in sheaves on the Kummer \'etale site 
$(X_{\Gamma,s})_{\ket}$ 
\[
0 \to \Lambda \to \bigoplus_{E \in V(\Sigma(\cX_\Gamma))} \pi_{E,\ast} \Lambda  \xrightarrow{\pm \res} \bigoplus_{C \in \bar{E}(\Sigma(\cX_\Gamma))} \pi_{C,\ast} \Lambda  \xrightarrow{\pm \res} \bigoplus_{P \in S(\Sigma(\cX_\Gamma))} i_{P,\ast} \Lambda \to 0
\]
with signs coming from \eqref{eq:choiceofsigns} and using the hypercohomology spectral sequence as:
\begin{equation} \label{eq:computeH1}
0 \to \rH^1_{\rm Sing}(\Sigma(\cX_\Gamma),\Lambda) \to \rH^1_\ket(X_{\Gamma,\tilde{s}},\Lambda) \to \bigoplus_{E \in V(\Sigma(\cX_\Gamma))}
 \rH^1_\ket(\tilde{E}_{\tilde{s}},\Lambda) \xrightarrow{\pm \res} \bigoplus_{C \in E(\Sigma(\cX_\Gamma))}  \rH^1_\ket(\tilde{C}_{\tilde{s}},\Lambda).
\end{equation}
Starting from now we assume that $K = \bF_q((t))$ is in odd equi-characteristic $p$ and 
\[
\Gamma = \Gamma_\tau
\]
is one of the irreducible arithmetic lattices described by 
Theorem ~\ref{thm:presentationgamma}. Hence 
\[
\Sigma(\cX_\Gamma) = S_{\Gamma_\tau}
\]
has only one vertex and a VH-structure coming from a VH-structure $A,B$ in the group $\Gamma$. Implicitly we have chosen an identification $A = \bP^1(k) = B$, and then  
\begin{eqnarray*}
\rH^1_\ket(\tilde{E}_{\tilde{s}},\Lambda)  & = & \big(\bigoplus_{a \in A} \Lambda(-1)\big)^{0}  \oplus \big(\bigoplus_{b \in B} \Lambda(-1)\big)^{0}, \\
\rH^1_\ket(\tilde{C}_{\tilde{s}},\Lambda)  & = & \big(\bigoplus_{\xi \in \bP^1(k)} \Lambda(-1)\big)^{0}. 
\end{eqnarray*}
Here $\Lambda(-1)$ is the inverse Tate-twist and $(-)^{0}$ denotes the kernel of the summation map.

\begin{lem} \label{lem:computeH1}
If $q+1 \in \Lambda^\ast$, then $\rH^1_{\et}(X_{\Gamma,\bar K},\Lambda) \simeq \Hom(\Gamma^\ab, \Lambda)$.
\end{lem}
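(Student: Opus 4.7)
The plan is to combine two inputs with the exact sequence \eqref{eq:computeH1}: the identification of its leftmost term coming from the classifying space property of $\Sigma(\cX_\Gamma)$, and the vanishing of the kernel of the restriction map, which is where the hypothesis $q+1 \in \Lambda^\ast$ enters. By Corollary~\ref{cor:explicitclassifyingspace}, $\Sigma(\cX_\Gamma) = S_{\Gamma_\tau} = \rK(\Gamma_\tau,1)$, and since $\Gamma_\tau$ acts trivially on $\Lambda$ cellular cohomology gives
\[
\rH^1_{\rm Sing}(\Sigma(\cX_\Gamma),\Lambda) \;=\; \rH^1(\Gamma_\tau,\Lambda) \;=\; \Hom(\Gamma_\tau^\ab,\Lambda).
\]
Together with the cospecialization isomorphism $\rH^1_\et(X_{\Gamma,\bar K},\Lambda) \simeq \rH^1_\ket(X_{\Gamma,\tilde s},\Lambda)$ already recorded, this reduces the lemma to the injectivity of the restriction map in \eqref{eq:computeH1}.

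For the injectivity, recall from Theorem~\ref{thm:presentationgamma} that $\Sigma(\cX_\Gamma)$ has only one vertex, so the source of the restriction is $\rH^1_\ket(\tilde E_{\tilde s},\Lambda) = (\bigoplus_A \Lambda(-1))^{0} \oplus (\bigoplus_B \Lambda(-1))^{0}$. An unoriented edge $\bar a = \{a_\xi, a_{-\xi}\}$ of type $A$ embeds in $\tilde E_{\tilde s}$ as two distinct vertical lines at the $u$-positions given by $a_\xi$ and $a_{-\xi}$; both pull back the $A$-summand to zero and the $B$-summand by the identity in their own local coordinates, but the two identifications of the normalization $\tilde C_{\tilde s}$ with $\bP^1_v$ differ by the local permutation $\sigma_\xi^B$ from Corollary~\ref{cor:localpermutationwork}. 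Hence the contribution of $\bar a$ to the restriction map equals $\id - (\sigma_\xi^B)^\ast$ on the $B$-summand, and symmetrically the edges of type $B$ contribute $\id - (\sigma_\eta^A)^\ast$ on the $A$-summand. A direct computation from the formulas in Corollary~\ref{cor:localpermutationwork} yields $\sigma_{-\xi}^B = (\sigma_\xi^B)^{-1}$, so the choice of representative for each pair is immaterial, and as $\bar a$ ranges over the unoriented edges of type $A$ the permutations $\sigma_\xi^B$ generate exactly the local group $P_B$ of Proposition~\ref{prop:determinelocalstructure}.

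The kernel of the restriction map therefore splits as $K_A \oplus K_B$ with $K_B = (\bigoplus_B \Lambda(-1))^{0,P_B}$ and analogously for $K_A$. By Proposition~\ref{prop:determinelocalstructure}, $P_B$ contains $\PSL(\bF_q[Z])$, which acts transitively (in fact $2$-transitively) on $B \simeq \bP^1(\bF_q)$; hence the $P_B$-invariants in $\bigoplus_B \Lambda(-1)$ form the one-dimensional submodule spanned by the all-ones vector, whose coordinate sum equals $q+1$. Under the hypothesis $q+1 \in \Lambda^\ast$ this vector is not sum-zero, so $K_B = 0$; the same argument gives $K_A = 0$, and the restriction map is injective.

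The most delicate step is the local identification of the restriction map as $\id - (\sigma_\xi^B)^\ast$: one has to track carefully how each edge $\bar a$ sits in $\tilde E_{\tilde s}$ via its two oriented representatives $a_{\pm\xi}$ and how the two resulting normalizations are twisted by the VH-structure established in Proposition~\ref{prop:establishVHstructure}. Everything else is a formal consequence of transitivity and the semisimplicity provided by the invertibility of $q+1$.
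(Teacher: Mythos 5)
Your proof is correct and takes essentially the same route as the paper: the same sequence \eqref{eq:computeH1}, the identification of the kernel of the restriction map with the $P_A$- and $P_B$-invariants of the sum-zero modules, their vanishing by transitivity of the local groups together with $q+1 \in \Lambda^\ast$, and the identification $\rH^1_{\rm Sing}(\Sigma(\cX_\Gamma),\Lambda) \simeq \Hom(\Gamma^\ab,\Lambda)$ via the one-vertex classifying space $S_{\Gamma_\tau}$. The only difference is that you spell out the edge-by-edge mechanism (the contribution $\id - (\sigma_\xi^B)^\ast$, together with $\sigma_{-\xi}^B = (\sigma_\xi^B)^{-1}$) behind the paper's one-sentence assertion that the maps $\pm\res$ impose invariance under the local permutation groups.
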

\begin{proof}
The maps $\pm \res$ in \eqref{eq:computeH1} impose invariance under local permutation groups $P_A$ and $P_B$, 
see Section~\S\ref{sec:localstructure}, so that we find an exact sequence
\[
0 \to\rH^1_{\rm Sing}(\Sigma(\cX_\Gamma),\Lambda)  \to \rH^1_\ket(X_{\Gamma,\tilde{s}},\Lambda) \to 
\rH^0(P_A,\big(\bigoplus_{a \in A} \Lambda(-1)\big)^{0})  \oplus \rH^0(P_B,\big(\bigoplus_{b \in B} \Lambda(-1)\big)^{0}) \to 0.
\]
Since the local permutation groups are $2$-transitive by Proposition~\ref{prop:determinelocalstructure}, the right hand side vanishes if $q+1 \in \Lambda^\ast$, and finally under this additional assumption 
\[
\rH^1_{\et}(X_{\Gamma,\bar K},\Lambda) \simeq \rH^1_\ket(X_{\Gamma,\tilde{s}},\Lambda) \simeq \rH^1_{\rm Sing}(\Sigma(\cX_\Gamma),\Lambda) \simeq \Hom(\Gamma^\ab, \Lambda)
\]
as claimed.
\end{proof}

\begin{prop} \label{prop:trivialalb}
The Albanese variety of $X_{\Gamma_\tau,K}$ is trivial.
\end{prop}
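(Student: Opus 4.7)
\begin{pro*}[Proof plan]
The plan is to reduce the statement about the Albanese variety to a vanishing statement for first étale cohomology with finite coefficients, which is then controlled via Lemma~\ref{lem:computeH1} by the abelianization of $\Gamma_\tau$, which we already know is finite.

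First I would recall the standard comparison: for a smooth projective surface $X$ over an algebraically closed field of characteristic $p$, the dimension of the Albanese variety $\Alb(X)$ equals $\frac{1}{2} \dim_{\bQ_\ell} \rH^1_\et(X, \bQ_\ell)$ for any prime $\ell \neq p$, via the identification of $\rH^1_\et(X,\bZ_\ell)$ with the Tate module of $\Pic^0(X)_\redu = \Alb(X)^\vee$. Moreover, the formation of the Albanese variety commutes with extension of the base field so that it suffices to treat $X_{\Gamma_\tau, \bar K}$. Hence it is enough to exhibit a single prime $\ell \neq p$ for which $\rH^1_\et(X_{\Gamma_\tau, \bar K}, \bQ_\ell) = 0$.

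Next, I would choose a prime $\ell$ with $\ell \neq p$ and $\ell \nmid (q+1)$; such primes exist in abundance. Then $\Lambda = \bZ/\ell^n \bZ$ satisfies the hypothesis $q+1 \in \Lambda^\ast$ of Lemma~\ref{lem:computeH1}, yielding
\[
\rH^1_\et(X_{\Gamma_\tau, \bar K}, \bZ/\ell^n\bZ) \simeq \Hom(\Gamma_\tau^\ab, \bZ/\ell^n\bZ)
\]
for every $n \geq 1$. Passing to the inverse limit over $n$ and tensoring with $\bQ_\ell$, and using that $\rH^1_\et$ is a finitely generated $\bZ_\ell$-module so inverse limits behave well, gives
\[
\rH^1_\et(X_{\Gamma_\tau, \bar K}, \bQ_\ell) \simeq \Hom_{\cont}(\Gamma_\tau^\ab, \bZ_\ell) \otimes_{\bZ_\ell} \bQ_\ell.
\]

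Finally, I would invoke Proposition~\ref{prop:justinfinite}(\ref{propitem:finitemaxab}), which tells us that $\Gamma_\tau^\ab$ is a finite group. Consequently $\Hom_{\cont}(\Gamma_\tau^\ab, \bZ_\ell)$ is torsion, hence vanishes after tensoring with $\bQ_\ell$, and therefore $\rH^1_\et(X_{\Gamma_\tau, \bar K}, \bQ_\ell) = 0$. By the reduction in the first step, $\Alb(X_{\Gamma_\tau, K}) = 0$. There is no serious obstacle here: the real work was done in Lemma~\ref{lem:computeH1} (where the local $2$-transitivity of $P_A$, $P_B$ killed the component-type contributions) and in Proposition~\ref{prop:justinfinite} (whose proof relied on Margulis' normal subgroup theorem via the rank $2$ arithmeticity of $\Gamma_\tau$); the remaining argument is a routine assembly of these inputs.
\end{pro*}
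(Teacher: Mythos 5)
Your proposal is correct and follows essentially the same route as the paper: the paper likewise combines Lemma~\ref{lem:computeH1} (for coefficients of order prime to $p(q+1)$) with the finiteness of $\Gamma_\tau^{\ab}$ from Proposition~\ref{prop:justinfinite} to conclude that the Albanese variety is trivial. Your write-up merely makes explicit the standard passage from finite-coefficient $\rH^1_\et$ to the $\ell$-adic Tate module of $\Pic^0_{\redu}$, which the paper leaves implicit.
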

\begin{proof}
By Lemma~\ref{lem:computeH1} the prime to $p(q+1)$ maximal abelian quotients of $\pi^\et_1(X_{\Gamma,\bar K})$ and $\Gamma$ agree. Since the latter is finite by Proposition~\ref{prop:justinfinite}  the Albanese variety of $X_{\Gamma,K}$ must be trivial.
\end{proof}

\subsection{On the \'etale fundamentalgroup}
In order to prove Theorem~\ref{thm:thmC} of the introduction we must analyse the \'etale fundemental group of $X_{\Gamma_\tau,K}$. 

\begin{thm} \label{thm:ncfakequadric} 
Let $\Gamma_\tau$ be one of the lattices described in Theorem~\ref{thm:presentationgamma} above. The surface 
\[
X_\tau = X_{\Gamma_\tau}
\]
is a minimal smooth projective surface of general type over $\bF_q((t))$ with 
\begin{enumeral}
\item ample canonical bundle, 
\item Chern ratio $\rc_1^2/\rc_2 = 2$, 
\item trivial Albanese variety, 
\item \label{item:nonredpic} non-reduced Picard scheme of dimension $0$, and 
\item \label{item:pi1quot} geometric \'etale fundamental group with an infinite continuous quotient
\[
\ov{\pi}_1^{\et}(X_\tau) \surj \widehat{\Gamma}_\tau.
\]
\end{enumeral}
For $q = 3$, the surface $X_\tau$ is a non-classical fake quadric over $\bF_3((t))$.
\end{thm}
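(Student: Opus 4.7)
Items (1)--(3) of the theorem collect results already established. Item (1) is Proposition~\ref{prop:algebraization}(2); item (2) is immediate from $\rc_1^2 = 2N(q-1)^2$ and $\rc_2 = N(q-1)^2$ given by \eqref{eq:c1squared} and \eqref{eq:c2}, where $N$ is the number of vertices of the quotient square complex $S_{\Gamma_\tau}$; item (3) is Proposition~\ref{prop:trivialalb}. Because $\Gamma_\tau$ acts simply transitively on the vertices of $T_{q+1} \times T_{q+1}$, we have $N = 1$, so that for $q = 3$ one reads off the numerical invariants $\rc_1^2 = 8$, $\rc_2 = 4$, and via Noether $\chi(X_\tau, \dO_{X_\tau}) = 1$ of the quadric.

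For item (5), the formal cover $\cY \times_R \cY \to \cX_{\Gamma_\tau}$ is étale Galois with group $\Gamma_\tau$, since $\Gamma_\tau$ acts freely on the cells of the dual square complex $T \times T$. Using residual finiteness of $\Gamma_\tau$ (in fact, residual pro-$p$ by Proposition~\ref{prop:resprop}), every finite quotient $\Gamma_\tau \twoheadrightarrow \Gamma_\tau/N$ produces a finite étale formal cover, which algebraizes by formal GAGA to a finite étale cover of $X_\tau$. Passing to the geometric generic fibre and taking the limit yields the desired continuous surjection $\ov{\pi}_1^\et(X_\tau) \twoheadrightarrow \widehat{\Gamma}_\tau$, whose target is infinite because $\Gamma_\tau$ is an infinite arithmetic lattice.

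Item (4) is the main point. Dimension zero of $\Pic^0_{X_\tau}$ follows from triviality of the Albanese, so it remains to exhibit a nonzero class in $\rH^1(X_\tau, \dO_{X_\tau})$. Apply the resolution of Proposition~\ref{prop:resolution} to $\dO_{X_{\Gamma_\tau, s}}$. Because $\rH^{>0}(\dO)$ vanishes on $\bP^1 \times \bP^1$, on $\bP^1$, and on a point, the associated hypercohomology spectral sequence has $E_1^{p,q} = 0$ for $q>0$, degenerates at $E_2$, and yields
\[
\rH^i\big(X_{\Gamma_\tau, s}, \dO_{X_{\Gamma_\tau, s}}\big) \simeq \rH^i(S_{\Gamma_\tau}, k).
\]
By Corollary~\ref{cor:explicitclassifyingspace}, $S_{\Gamma_\tau}$ is a $\rK(\Gamma_\tau, 1)$-space with contractible universal cover $T_{q+1} \times T_{q+1}$, so the right-hand side equals the group cohomology $\rH^i(\Gamma_\tau, k)$. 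In particular $\rH^1(X_{\Gamma_\tau, s}, \dO) = \Hom(\Gamma_\tau^\ab, k)$ is nonzero, since Proposition~\ref{prop:uniformabelianquot}(1) provides a nontrivial quotient of $\Gamma_\tau^\ab$ killed by $p$, and $k$ has characteristic $p$. Now invoke that $\cX_{\Gamma_\tau} \to \Spf(R)$ is log-smooth with reduced normal-crossing special fibre; by Illusie--Kato log Hodge theory the sheaf $R^1 f_* \dO$ is $R$-flat and compatible with base change, whence
\[
h^1\!\big(X_{\Gamma_\tau, K}, \dO\big) = h^1\!\big(X_{\Gamma_\tau, s}, \dO\big) > 0,
\]
which proves the non-reducedness. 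Combined with items (1)--(3), the case $q = 3$ gives the non-classical fake quadric.

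\textbf{Main obstacle.} The delicate step is the comparison $h^1(X_{\Gamma_\tau, K}, \dO) = h^1(X_{\Gamma_\tau, s}, \dO)$: ordinary upper semicontinuity only provides the inequality in the wrong direction, and the equality has to be extracted from log-smoothness of the model (e.g.\ via the Illusie--Kato theorem on $R^i f_* \Omega^{\bullet,\log}_{X/S}$), or, failing that, recovered indirectly from the constancy of $\chi$ combined with an independent control of $h^2(X_{\Gamma_\tau, K}, \dO)$ via Serre duality and the geometric genus of the log canonical sheaf.
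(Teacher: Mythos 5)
Items (1)--(3), the observation $N=1$, and the $q=3$ endgame agree with the paper, and your item (5) is essentially the paper's argument: the paper gets the surjection by composing the specialization map $\ov{\pi}_1^{\et}(X_\tau)=\pi_1^{\et}(X_{\tau,\bar K})\surj \pi_1^{\et}(X_{\tau,\bar s})$ (surjective because the special fibre is reduced and geometrically connected) with the quotient $\pi_1^{\et}(X_{\tau,\bar s})\surj\widehat{\Gamma}_\tau$ coming from the Zariski-locally trivial formal cover $\cY\times_R\cY\to\cX_{\Gamma_\tau}$. Your variant via algebraized finite \'etale covers is fine in spirit, but to conclude a \emph{surjection} onto $\Gamma_\tau/N$ you still need the covers to remain connected over $\bar K$, which is precisely what the surjectivity of specialization supplies; this is a minor point.

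The genuine gap is in item (4). Your special-fibre computation $\rH^1(X_{\Gamma_\tau,s},\dO)\simeq \rH^1(S_{\Gamma_\tau},k)=\Hom(\Gamma_\tau^\ab,k)\neq 0$ is correct and pleasant, but the passage to the generic fibre is exactly the unproved step, as you yourself flag. Semicontinuity only gives $h^1(X_{\tau,K},\dO)\leq h^1(X_{\tau,s},\dO)$, and there is no ``Illusie--Kato'' theorem asserting that $R^1f_\ast\dO$ is flat and compatible with base change for a proper log-smooth family in equal characteristic $p$: such local-freeness/degeneration statements require Deligne--Illusie-type liftability hypotheses, and coherent Hodge numbers of fibres are in general not constant in characteristic-$p$ families, so the citation does not carry the weight you place on it. Your fallback (constancy of $\chi$ plus control of $h^2$) is not carried out; for $q=3$ it amounts to proving $p_g(X_{\tau,K})>0$, which is not addressed. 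The paper avoids the special fibre entirely and deduces (4) from (5): the surjection $\ov{\pi}_1^{\et}(X_\tau)\surj\widehat{\Gamma}_\tau$ combined with $\Gamma_\tau^\ab\surj\bF_q[Z]$ (Proposition~\ref{prop:uniformabelianquot}) gives a nontrivial $p$-torsion quotient of $\pi_1^{\ab}(X_{\tau,\bar K})$; the corresponding $\bZ/p\bZ$-covers yield multiplicative ($\mu_p$-type) torsion in $\Pic_{X_\tau}$, hence in $\Pic^0_{X_\tau}$, so that $\dim\rT_0\Pic^0_{X_\tau}=\dim\rH^1(X_{\tau,K},\dO_{X_\tau})\geq 2\log_p q>0$, while $\dim\Pic^0_{X_\tau}=\dim\Alb=0$ by (3). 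To repair your route you would either have to prove the base-change statement for this particular family, or argue directly on the generic fibre as the paper does (for instance via Artin--Schreier, which embeds a nonzero class of $\rH^1_{\et}(X_{\tau,\bar K},\bZ/p\bZ)$ into $\rH^1(X_{\tau,\bar K},\dO)$).
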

\begin{proof}
The existence of $X_\tau$ as a minimal smooth projective surface of general type was proven in Proposition~\ref{prop:algebraization}, the Chern numbers are computed in \eqref{eq:c1squared}  and \eqref{eq:c2}, and Proposition~\ref{prop:trivialalb} deals with the claim on the Albanese variety.  It remains to show \ref{item:pi1quot} and \ref{item:nonredpic}.

Consider $\cX_\tau = \cX_{\Gamma_\tau}$ with geometric special fibre $X_{\tau,\bar{s}}$. Since $\cX_\tau$ has genrically a reducd semistable geometrically connected special fibre, the geometric specialisation map 
\[
{\rm sp}: \ov{\pi}_1^{\et}(X_\tau) = \pi_1^{\et}(X_{\tau,\bar K}) \surj \pi_1^{\et}(X_{\tau,\bar{s}})
\]
is surjective. The formal Zariski-trivial cover $\cY \times_R \cY \to \cX_\tau$ induces a continuous quotient map 
\[
\pi_1^{\et}(X_{\tau,\bar{s}}) \surj \widehat{\Gamma}_\tau
\]
to the profinite completion $\widehat{\Gamma}_\tau$ of $\Gamma_\tau$. The composition of these two quotient maps 
yields the map in \ref{item:pi1quot}, and $\widehat{\Gamma}_\tau$ is infinite because $\Gamma_\tau$ is 
residually pro-$p$ due to Proposition~\ref{prop:resprop}.

For \ref{item:nonredpic} we note that $\dim(\Pic^0_{X_\tau})$ equals the dimension of the Albanese variety, but that $\Pic^0$ is nontrivial due to non-trivial $p$-torsion quotients, i.e., by Proposition~\ref{prop:uniformabelianquot},
\[
\pi_1^{\ab}(X_{\tau,\bar K}) \surj \Gamma^\ab \surj \bF_q[Z]
\]
which leads to non-trivial multiplicative torsion in $\Pic_{X_\tau}$ hence in $\Pic_{X_\tau}^0$. In particular
\[
\dim \rH^1(X_{\tau,K},\dO_{X_\tau})  = \dim (\rT_0 \Pic^0_{X_\tau}) \geq 2 \log_p(q) \geq 2
\]
is nontrivial.

If $q=  3$, then $\chi = 1$ by Lemma~\ref{lem:countingcombinatorics} and thus $\rc_1^2 = 8$ and $\rc_2 = 4$, so that indeed $X_\tau$ is a non-classical fake quadric in this case. 
\end{proof}




\begin{thebibliography}{[BM00a]}

\bibitem[BB95]{ballmannbrin}
Ballmann, W., Brin, M.,
Orbihedra of nonpositive curvature,
\textit{Inst.\ Hautes Etudes Sci.\ Publ.\ Math.} \textbf{82} (1995), 169--209.

\bibitem[BB12]{boecklebutenuth}
B\"ockle, G., Butenuth, R.,
On computing quaternion quotient graphs for function fields,
\textit{J.\ Th\'eor.\ Nombres Bordeaux} \textbf{24} (2012), no.\ 1, 73--99. 

\bibitem[BCG05]{bauercatanesegrunewald}
Bauer, I.C. and Catanese, F. and Grunewald, F.,
The classification of surfaces with $p_{g}=q=0$ isogenous to a product of curves,
\textit{Pure {A}ppl. {M}ath. Q.} \textbf{4} special Issue: In honor of Fedor Bogomolov Part 1, (2008), no.\ 2, 547--586.

\bibitem[Be96]{beauville:surfaces}
Beauville, A., 
\textit{Complex algebraic surfaces},
translated from the 1978 French original by R. Barlow, with assistance from N. I. Shepherd-Barron and M. Reid,
London Mathematical Society Student Texts \textbf{34} (2nd ed.), Cambridge University Press, 1996, x+132 pp.

\bibitem[Bh98]{behr}
Behr, H.,
Arithmetic groups over function fields I. A complete characterization of finitely generated and finitely presented arithmetic subgroups of reductive algebraic groups,
\textit{J.\ Reine Angew.\ Math.} \textbf{495} (1998), 79--118. 

\bibitem[Bl03]{blumenthal}
Blumenthal, O., 
\"Uber Modulfunktionen von mehreren Ver\"anderlichen, 
\textit{Math.\ Ann.} \textbf{56} (1903), 509--548.

\bibitem[BLR90]{blr:neronmodels}
Bosch, S., L\"utkebohmert, W., Raynaud, M.,
\textit{N\'eron models}, 
Ergebnisse der Mathematik und ihrer Grenzgebiete (3), volume \textbf{21}, Springer-Verlag, Berlin, 1990, x+325 pp.

\bibitem[BM97]{burger-mozes:simple}
Burger, M., Mozes, Sh.,
Finitely presented simple groups and products of trees,
\textit{C.\ R.\ Acad.\ Sci.\ Paris S\'er.\ I Math.} \textbf{324} (1997), no.\ 7, 747--752. 

\bibitem[BM00a]{burger-mozes:localtoglobal}
Burger, M., Mozes, Sh.,
Groups acting on trees: from local to global structure,
\textit{Inst.\ Hautes \'Etudes Sci.\ Publ.\ Math.} \textbf{92} (2000), 113--150.

\bibitem[BM00b]{burger-mozes:lattices}
Burger, M., Mozes, Sh.,
Lattices in product of trees,
\textit{Inst.\ Hautes \'Etudes Sci.\ Publ.\ Math.} \textbf{92} (2000), 151--194.

\bibitem[Di22]{dickson:quaternions}
Dickson, L.\thinspace{}E., 
Arithmetic of quaternions, 
\textit{Proc.\ London Math.\ Soc.} (2) \textbf{20} (1922), 225--232.

\bibitem[Di58]{dickson:pgl2}
Dickson, L.\thinspace{}E.,
\textit{Linear groups: With an exposition of the Galois field theory},
with an introduction by W. Magnus, Dover Publications, Inc., New York 1958, xvi+312 pp. 

\bibitem[Dz12]{dzambic:fakequadrics}
D\v{z}ambi\'c, A., 
Fake quadrics from irreducible lattices acting on the product of upper half planes, preprint 2012.

\bibitem[Fa12]{faber:groupsinpgl2}
Faber, X., 
Finite $p$-Irregular Subgroups of $\PGL(2,k)$, preprint, \href{http://arxiv.org/abs/1112.1999v2}{arXiv:[math.NT]1112.1999v2}.

\bibitem[Gi07]{giudici}
Giudici, M.,
Maximal subgroups of almost simple groups with socle $\PSL(2,q)$,
preprint, \href{http://arxiv.org/abs/math/0703685v1}{arXiv: [math.GR]0703685v1}.

\bibitem[GN95]{gekelernonnengardt}
Gekeler, E.-U.,  Nonnengardt, U.,
Fundamental domains of some arithmetic groups over function fields, 
\textit{Internat.\ J.\ Math.} \textbf{6} (1995), 689--708.

\bibitem[He54]{herrmann}
Herrmann, O., 
Eine metrische Charakterisierung eines Fundamentalbereiches der Hilbertschen Modulgruppen,
\textit{Math.\ Z.} \textbf{60} (1954), 148--155.

\bibitem[Hi87]{hirzebruch:oevre1}
Hirzebruch, F.,
\textit{Gesammelte Abhandlungen, Band I, 1951--1962},
Springer-Verlag, Berlin, 1987, viii + 814 pp.

\bibitem[KW80]{kirchheimerwolfahrt}
Kirchheimer, F., Wolfart, J.,
Explizite Pr\"asentation gewisser Hilbertscher Modulgruppen durch Erzeugende und Relationen,
\textit{J.\ Reine Angew.\ Math.} \textbf{315} (1980), 139--173.

\bibitem[Ma40]{maass:hilbertmodulargroup}
Maass, H., 
\"Uber Gruppen von hyperabelschen Transformationen, 
\textit{Sitzungsber.\ Heidelberg Akad.\ Wiss.\ Math.-nat.\ Klasse \textbf{2}}, 1940.

\bibitem[Ma91]{margulis:book}
Margulis, G.~A.,
\textit{Discrete subgroups of semisimple Lie groups},
Ergebnisse der Mathematik und ihrer Grenzgebiete (3), volume \textbf{17}, Springer-Verlag, Berlin, 1991,  x+388 pp.

\bibitem[Mo95]{mozes:cartan}
Mozes, Sh.,
Actions of Cartan subgroups,
\textit{Israel J.\ Math.} \textbf{90} (1995), no.\ 1--3, 253--294. 

\bibitem[Mu79]{mumford:fakep2}
Mumford, D.,
An algebraic surface with $K$ ample, $(K^2)=9$, $p_g = q = 0$,
\textit{Amer.\ J.\ Math.} \textbf{101} (1979), no.\ 1, 233--244.

\bibitem[Pa11]{papikian:GLn}
Papikian, M.,
On finite arithmetic simplicial complexes,
\textit{Proc.\ Amer.\ Math.\ Soc.} \textbf{139} (2011), no.\ 1, 111--124. 

\bibitem[Pa12]{papikian:Delliptic}
Papikian, M.,
Local diophantine properties of modular curves of $\cD$-elliptic sheaves
\textit{J.\ reine angew.\ Math.} \textbf{664} (2012), 115--140.

\bibitem[Ra04]{rattaggi:thesis}
Rattaggi, D.\thinspace{}A., Computations in Goups acting on a product of trees: normal subgroup structures and quaternion lattices, thesis ETH Z\"urich, 2004.

\bibitem[Se80]{serre:trees} 
Serre, J.-P., 
{\it Trees}, translated from the French by John Stillwell, 
Springer, 1980, ix+142pp.

\bibitem[Sh78]{shavel:fakequadric}
Shavel, I.\thinspace{}H.,
A class of algebraic surfaces of general type constructed from quaternion algebras,
\textit{Pacific J.\ Math.} \textbf{76} (1978), no.\ 1, 221--245. 

\bibitem[Sw71]{swan:bianchigrouppresentation}
Swan, R.\thinspace{}G.,
Generators and relations for certain special linear groups,
\textit{Advances in Math.} \textbf{6} (1971), 1--77.

\bibitem[Vo09]{voight:funddomainfuchsian}
Voight, J.,
Computing fundamental domains for Fuchsian groups,
\textit{J.\ Th\'eor.\ Nombres Bordeaux} \textbf{21} (2009), no.\ 2, 469--491. 

\bibitem[Wi96]{wise:thesis}
Wise, D.,
{Non-positively curved squared complexes, aperiodic tilings, and non-residually finite groups},  Ph.D.~thesis, Princeton University,  1996, 71 pp.

\end{thebibliography}
\end{document}